\theoremstyle{plain}
\newtheorem{theorem}{Theorem}[section]
\newtheorem{thmi}{Theorem}
\newtheorem{corollary}[theorem]{Corollary}
\newtheorem{lemma}[theorem]{Lemma}
\newtheorem{proposition}[theorem]{Proposition}
\theoremstyle{definition}
\newtheorem{definition}[theorem]{Definition}
\newtheorem*{defi}{Definition}
\newtheorem{claim}{Claim}[theorem]
\newenvironment{claimproof}[1]{\bigskip\textit{Proof.}\space#1}{\hfill $\blacksquare$}
\theoremstyle{remark}
\newenvironment{pushright}{\begin{itemize}\item[\hspace{12pt}]}{\end{itemize}}
\DeclareMathAlphabet{\pazocal}{OMS}{zplm}{m}{n}
\newcommand{\1}{\mathds 1}
\newcommand{\fA}{\mathfrak A}
\newcommand{\pA}{\pazocal A}
\newcommand{\B}{\mathrm B}
\newcommand{\pB}{\pazocal B}
\newcommand{\pC}{\pazocal C}
\newcommand{\mC}{\mathcal C}
\newcommand{\pD}{\pazocal D}
\newcommand{\rE}{\mathrm E}
\newcommand{\mG}{\mathcal G}
\newcommand{\pG}{\pazocal G}
\newcommand{\pH}{\pazocal H}
\newcommand{\pI}{\pazocal I}
\newcommand{\pL}{\pazocal L}
\newcommand{\rL}{\mathrm L}
\newcommand{\pM}{\pazocal M}
\newcommand{\N}{\mathbb N}
\newcommand{\pN}{\pazocal N}
\newcommand{\rN}{\mathrm N}
\newcommand{\cP}{\mathbb P}
\newcommand{\fP}{\mathfrak P}
\newcommand{\mP}{\mathcal P}
\newcommand{\mR}{\mathcal R}
\newcommand{\pU}{\pazocal U}
\newcommand{\rV}{\mathrm V}
\newcommand{\Z}{\mathbb Z}
\newcommand{\pZ}{\pazocal Z}
\newcommand{\AMA}{\mathrm{AMA}}
\newcommand{\inc}{\mathrm{inc}}
\def\Ind#1#2{#1\setbox0=\hbox{$#1x$}\kern\wd0\hbox to 
0pt{\hss$#1\mid$\hss}
\lower.9\ht0\hbox to 0pt{\hss$#1\smile$\hss}\kern\wd0}
\def\ind{\mathop{\mathpalette\Ind{}}}
\def\Notind#1#2{#1\setbox0=\hbox{$#1x$}\kern\wd0\hbox to 
0pt{\mathchardef
\nn="3236\hss$#1\nn$\kern1.4\wd0\hss}\hbox to 
0pt{\hss$#1\mid$\hss}\lower.9\ht0
\hbox to 0pt{\hss$#1\smile$\hss}\kern\wd0}
\def\act#1{\overset{#1}{\curvearrowright}}
\begin{document}

\pagestyle{empty}

\title{Hyperfinite measure-preserving actions of countable groups and their model theory}
\author{Alice Giraud}

\date{October 17, 2019}

\maketitle

\begin{abstract}
We give a shorter proof of a theorem of G.~Elek stating that two hyperfinite measure-preserving actions of a countable group on standard probability spaces are approximately conjugate if and only if they have the same invariant random subgroup.

We then use this theorem to study model theory of hyperfinite measure-preserving actions of countable groups on probability spaces. This work generalizes the model-theoretic study of automorphisms of probability spaces conducted by I.~Ben~Yaacov, A.~Berenstein, C.~W.~Henson and A.~Usvyatsov.
\end{abstract}

\pagestyle{headings}

\tableofcontents

\clearpage


\section{Introduction}

Classical ergodic theory consists of the study of probability measure-preserving (pmp in short) transformations of a probability space. A \textbf{pmp transformation} $T$ of a probability space $(X,\mu)$ is a bimeasurable permutation of $X$ such that for all measurable subsets $A$ of $X$, $\mu(T^{-1} A) = \mu(A)$. It is called \textbf{ergodic} if any $T$-invariant subset of $X$ is either null or conull, and it is called \textbf{aperiodic} if almost every $T$-orbit is infinite. In the case of a single transformation $T$ of an atomless probability space, it is well-known that ergodicity implies aperiodicity. For now, we restrict ourselves to \textbf{standard probability spaces}, that is probability spaces that are isomorphic to the interval $[0,1]$ equipped with the Lebesgue measure.

Two pmp transformations $T$ and $T'$ are said to be \textbf{conjugate}, or sometimes \textbf{isomorphic}, if there is a third pmp transformation $S$ such that up to a null set, $T' = S T S^{-1}$. One of the main goals of ergodic theory is to understand the conjugacy relation on pmp transformations, particularly on the set of ergodic pmp transformations. Conjugacy is completely understood in some specific cases, for example, entropy is a complete invariant of conjugacy for Bernoulli shifts \cite{EntropyBernoulliShifts} and spectrum is a complete invariant of conjugacy for compact transformations. However, in general, conjugacy is a very complicated relation as shown in \cite{TurbulenceConjugacy} and \cite{FRW}.
\\

In this paper we study the simpler relation of approximate conjugacy. Two pmp transformations $T$ and $T'$ of $(X,\mu)$ are said to be \textbf{approximately conjugate} if for all $\varepsilon >0$ there is a third pmp transformation $S$ of $(X,\mu)$ such that $T' = STS^{-1}$ up to a set of measure at most $\varepsilon$. It is a well-known consequence of Rokhlin Lemma that any two aperiodic pmp transformations of standard probability spaces are approximately conjugate \cite[Thm. 2.4]{kechrisGlobalAspectsErgodic2010}. We thus focus on understanding the approximate conjugacy relation for general pmp actions of countable discrete groups rather than single pmp transformations, which correspond to $\Z$-actions.

A \textbf{pmp action} of a countable group $\Gamma$ on a probability space $(X,\mu)$ is an action of $\Gamma$ on $X$ by pmp transformations. For a pmp action $\Gamma \act{\alpha} (X,\mu)$ and $\gamma \in \Gamma$, we let $\gamma^\alpha$ denote the pmp transformation associated to $\gamma$ in the action $\alpha$. Two pmp actions $\alpha$ and $\beta$ of a countable group $\Gamma$ are \textbf{conjugate} if there is a pmp transformation $S$ such that $S^{-1} \gamma^\alpha S = \gamma^\beta$ for all $\gamma \in \Gamma$. We say that $\alpha$ is a \textbf{factor} of $\beta$, denoted by $\alpha \sqsubseteq \beta$ if there is a measure-preserving map $S : X \rightarrow X$ such that $\gamma^\alpha S = S \gamma^\beta$ for every $\gamma \in \Gamma$.

We say that $\alpha$ and $\beta$ are \textbf{approximately conjugate} if for every finite $F \subseteq \Gamma$ and every $\varepsilon >0$, there exists a pmp transformation $S$ of $X$ such that
$$\mu \left( \{ x \in X : \exists \gamma \in F,\ \gamma^\beta x \neq S \gamma^\alpha S^{-1} x \} \right) < \varepsilon.$$

This notion of approximate conjugacy comes from the study of the spaces $\mathrm{Aut}(X,\mu)$ and $A(\Gamma,X,\mu)$ of pmp transformations of $(X,\mu)$ and of pmp actions of $\Gamma$ on $(X,\mu)$, respectively.

The space $\mathrm{Aut}(X,\mu)$ can be equipped with two topologies: the weak and the uniform topology (see \cite{kechrisGlobalAspectsErgodic2010} for definitions).
Two pmp transformations $T$ and $S$ are called \textbf{weakly equivalent} if $\overline{[T]}^w = \overline{[S]}^w$, where $[T]$ is the conjugacy class of $T$, and $\overline{A}^w$ denotes the closure of $A$ in the weak topology. Then, the space of actions can be seen as a closed subspace of $\mathrm{Aut}(X,\mu)^\Gamma$ equipped with either product topology, and this induces two topologies on $A(\Gamma,X,\mu)$, that we respectively call again the weak and the uniform topology. In the same fashion as for transformations, we say that two actions $\alpha$ and $\beta$ are weakly equivalent if $\overline{[\alpha]}^w = \overline{[\beta]}^w$.

Now approximate conjugacy is the uniform counterpart of weak equivalence, that is, two pmp actions $\alpha$ and $\beta$ are approximately conjugate if and only if $\overline{[\alpha]}^u = \overline{[\beta]}^u$, where $\overline{A}^u$ is the uniform closure of $A$. The study of approximate conjugacy in the present paper was mostly motivated by similar results obtained for weak equivalence by R.~Tucker-Drob in \cite{tucker-drobWeakEquivalenceNonclassifiability2015}.
\\

The first obstacle to approximate conjugacy is freeness : a pmp action of $\Gamma$ is \textbf{free} if the set of fixed points of any nontrivial element of $\Gamma$ is null. For $\Z$-actions, freeness corresponds to aperiodicity. It is easy to see that approximate conjugacy preserves the freeness of the actions, and that the trivial action is only approximately conjugate with itself.

In fact, we have a better result. For a pmp action $\Gamma \act{\alpha} (X,\mu)$, the pushforward of the measure $\mu$ by the stabilizer application $x \in X \mapsto \mathrm{Stab}^\alpha(x)$ gives a measure $\theta_\alpha$ on the space of subgroups of $\Gamma$. We call this measure the Invariant Random Subgroup (IRS in short, see \cite{abertKestenTheoremInvariant2014}) of the action $\alpha$. Then it is not hard to see that the IRS is an invariant of approximate conjugacy. Moreover, free actions correspond to the case where the IRS is the Dirac measure on the trivial subgroup $\delta_{\{e\}}$ and the trivial action corresponds to the case where the IRS is $\delta_\Gamma$.
\\

In this paper we work with hyperfinite actions, which are defined as follows:

\begin{defi}
A pmp action $\Gamma \act{} (X,\mu)$ is said to be \textbf{hyperfinite} if for any finite subset $S$ of $\Gamma$ and any $\varepsilon>0$, there exists a finite group $G$ acting in a measure-preserving way on $(X,\mu)$ such that
$$\mu \left( \{ x \in X : S \cdot x \subseteq G \cdot x \} \right) > 1 - \varepsilon.$$
\end{defi}

It is a theorem of D.~S.~Ornstein and B.~Weiss \cite{OrnsteinWeiss} that pmp actions of amenable groups are hyperfinite.
\\

In general, we have the following implications:
$$\mathrm{approximate\ conjugacy} \Longrightarrow \mathrm{weak\ equivalence} \Longrightarrow \mathrm{same\ IRS}.$$
In the most general context, the IRS of an action is not a complete invariant of approximate conjugacy. However, G.~Elek proved that when restricted to hyperfinite actions, it is:

\begin{thmi}[G.Elek, {\cite[Thm. 9]{elekFiniteGraphsAmenability2012}}] \label{Elek} \label{approximate conjugacy for pmp actions with a given hyperfinite irs}
Let $\alpha$ and $\beta$ be two pmp hyperfinite actions of a group $\Gamma$ on a standard probability space such that $\theta_\alpha = \theta_\beta$. Then $\alpha$ and $\beta$ are approximately conjugate.
\end{thmi}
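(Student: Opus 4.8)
The plan is to fix a finite symmetric $F\subseteq\Gamma$ with $e\in F$ and an $\varepsilon>0$, and to build a single pmp transformation $S$ of $(X,\mu)$ with
$$\mu\big(\{x\in X:\exists\gamma\in F,\ \gamma^\beta x\neq S\gamma^\alpha S^{-1}x\}\big)<\varepsilon,$$
which is exactly what approximate conjugacy asks for. Writing $T=S^{-1}$, this is the same as producing a measure isomorphism $T$ of $(X,\mu)$ that \emph{approximately intertwines the partial $F$-actions}, i.e. with $\mu(\{x:T\gamma^\beta x\neq\gamma^\alpha Tx\})<\varepsilon/|F|$ for each $\gamma\in F$; in fact I would aim for a $T$ that intertwines them \emph{exactly} outside a set of measure $<\varepsilon$. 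The guiding idea is that $T$ can be assembled by matching finite pieces of the two $F$-Schreier graphings, and that the combinatorial data controlling which finite pieces occur, and with what frequency, is precisely the invariant random subgroup.

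\emph{Step 1: cutting both actions into finite F{\o}lner pieces.} Since $\alpha$ is hyperfinite, its orbit equivalence relation $\mathcal R_\alpha$ is hyperfinite, hence amenable, and so is the sub-relation generated by the $F$-Schreier graphing; in particular $\mu$-a.e.\ $F$-Schreier graph of $\alpha$ is an amenable graph. By the measurable Ornstein--Weiss (Rokhlin-type) lemma for amenable equivalence relations one can then find a finite Borel equivalence relation $E_\alpha\subseteq\mathcal R_\alpha$ such that, off a set of measure $<\varepsilon/10$, every $E_\alpha$-class $C$ is connected in the $F$-Schreier graph, has small relative $F$-boundary $|\partial_F C|/|C|<\varepsilon/10$ (where $\partial_F C=\{x\in C:\gamma^\alpha x\notin C\text{ for some }\gamma\in F\}$), and is isomorphic, as a finite $F$-edge-labelled graph, to one of finitely many ``shapes'' $\Phi_1,\dots,\Phi_m$: a finite menu of F{\o}lner sets suffices in Ornstein--Weiss, and only finitely many $F$-labelled graphs of diameter below a fixed bound exist. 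On the good set the partial $F$-action of $\alpha$ is recovered from $E_\alpha$. One does the same for $\beta$, obtaining $E_\beta$.

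\emph{Step 2: the shape statistics are controlled by the IRS --- the main obstacle.} The crucial point is that the menu $\{\Phi_i\}$ together with the $\mu$-measures $p_i^\alpha$ of the unions of $E_\alpha$-classes isomorphic to $\Phi_i$ can be chosen to depend only on $\theta_\alpha$, and symmetrically for $\beta$. This rests on two facts. First, the distribution under $\mu$ of the rooted $F$-Schreier graph of $\alpha$ is exactly the pushforward of $\theta_\alpha$ along $\Lambda\mapsto(\Gamma/\Lambda,\Lambda)$, so $\alpha$ and $\beta$ have literally the same Benjamini--Schramm/local statistics. Second, hyperfiniteness is what lets one transfer the (a priori $\beta$-specific) Ornstein--Weiss tiling to the $\alpha$-side without changing the shapes or their densities: the finite-piece statistics of a hyperfinite graphing are a function of its local statistics alone. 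Arranging the Ornstein--Weiss construction so that the output menu and densities genuinely are functions of the local statistics, and not of the measurable choices made along the way, is the delicate heart of the argument; granting it, $\theta_\alpha=\theta_\beta$ yields $p_i^\alpha=p_i^\beta$ for every $i$, up to an error no larger than the measures of the two exceptional sets. I expect this to be the step on which a ``shorter'' proof must economise.

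\emph{Step 3: assembling $T$ and concluding.} Fix $\Phi_i$. Because the actions live on an atomless standard space and the $E_\alpha$-classes are finite, measure-preservation forces each class to carry uniform mass, so the union $U_i^\alpha$ of the $E_\alpha$-classes isomorphic to $\Phi_i$ is isomorphic, as an $F$-labelled Borel space, to $\Phi_i\times(W_i^\alpha,\nu_i^\alpha)$ with $F$ acting through the copy of $\Phi_i$ and $\nu_i^\alpha$ a finite atomless measure of total mass $p_i^\alpha/|\Phi_i|$; likewise $U_i^\beta\cong\Phi_i\times(W_i^\beta,\nu_i^\beta)$. Fix a measure isomorphism between subsets of $W_i^\beta$ and of $W_i^\alpha$ of the common mass $\min(\nu_i^\alpha(W_i^\alpha),\nu_i^\beta(W_i^\beta))$, and let $T$ act there as the identity on $\Phi_i$ times this isomorphism. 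Doing this for $i=1,\dots,m$ defines $T$ off a set of measure at most $\sum_i|p_i^\alpha-p_i^\beta|+\varepsilon/5$, which is $<\varepsilon/2$ once the approximations of Step 1 are fine enough; there are only finitely many shapes, so no circularity arises. On each matched class $T$ is a genuine isomorphism of $F$-labelled graphs, so $T\gamma^\beta x=\gamma^\alpha Tx$ for all $\gamma\in F$ unless $x$ lies in the $F$-boundary of its class, a set of relative measure $<\varepsilon/10$. Extending $T$ by an arbitrary measure isomorphism on the leftover pieces (of equal measure on the two sides) and setting $S=T^{-1}$, the set $\{x:\exists\gamma\in F,\ \gamma^\beta x\neq S\gamma^\alpha S^{-1}x\}$ has measure $<\varepsilon$, as required. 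The entire difficulty is concentrated in Step 2: the whole proof reduces to the assertion that the finite-piece statistics of a hyperfinite pmp action are pinned down by its invariant random subgroup.
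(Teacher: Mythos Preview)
Your outline captures the natural strategy, and you correctly locate the crux in Step~2 --- but you have not proved Step~2, you have ``granted'' it. The assertion that the Ornstein--Weiss tile statistics of a hyperfinite graphing are a function of its local statistics alone is essentially the theorem restated in combinatorial language; the measurable choices made in any quasi-tiling construction (selecting centres, resolving overlaps) are not \emph{a priori} functions of the rooted-graph distribution, and forcing them to be so is real work, of the same order as Elek's original argument. As written, your proof is a reduction of the theorem to itself.

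The paper sidesteps Step~2 entirely by a confluence argument. Rather than comparing $\alpha$ and $\beta$ directly, it forms the relative independent joining $\zeta = \alpha \underset{\boldsymbol{\theta}}{\times} \beta$ over the conjugation action $\Gamma \act{} (\mathrm{Sub}(\Gamma),\theta)$; a one-line computation gives $\theta_\zeta = \theta$, and both $\alpha$ and $\beta$ are factors of $\zeta$. This reduces everything to the special case $\alpha \sqsubseteq \zeta$ with equal IRS, where the factor map $\pi$ is (precisely because the IRS agree) a local isomorphism of the coloured $F$-Schreier graphings. One then performs the hyperfinite cutting on \emph{one} side only --- remove a small edge set $Z$ from $\pG_{\alpha,F}$ to get finite components --- and pulls $Z$ back through $\pi$ to cut $\pG_{\zeta,F}$. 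The two decompositions now have identical shape statistics \emph{by construction}: the $\pi$-preimage of the union of components of shape $\Phi_i$ is itself a union of components of shape $\Phi_i$, of the same measure since $\pi$ is measure-preserving. The matching bijection is then assembled shape by shape exactly as in your Step~3. So the paper's shortcut is not a slicker proof of your Step~2; it is a joining that makes the two tilings compatible via an actual map, so that Step~2 never needs to be proved at all.
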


This theorem thus generalizes the consequence \cite[Thm. 2.4]{kechrisGlobalAspectsErgodic2010} of Rokhlin Lemma, which can be obtained by taking $\Gamma = \Z$ and $\theta_\alpha = \theta_\beta = \delta_{\{e\}}$.

In this paper, we give a shorter proof of this theorem, first by considering the critical case of actions which are factors one of another and then using a confluence argument to conclude in the general case. Moreover, when one of the actions is a factor of the other, we add a slight improvement to the theorem by requiring that the pmp transformations witnessing approximate conjugacy stabilize some measurable sets. This stronger version of the theorem will be used for the model theoretic study of pmp actions, which is the main topic of the present paper.
\\

The formalism of continuous model theory that we use was developed by I.~Ben~Yaacov and A.~Usvyatsov.

While classical model theory is concerned with algebraic theories such as discrete groups, algebraically closed or real closed fields, its continuous counterpart allows the study of metric structures. In recent years, continuous model theory has been used to study theories such as metrics spaces, Banach spaces, Hilbert spaces and measure algebras. More precisely, a particular attention was given to the study of formulas involving automorphisms of the latter theories.
\\

In the present paper we are interested in the model theory of a group action on a probability space, in other words, we look at formulas involving finite subsets of automorphisms of a probability space $(X,\mu)$ from a given subgroup of the group of automorphisms of $(X,\mu)$. However, probability spaces do not admit a model theoretic treatment as such, where the elements of a structure are the points in probability spaces.

In order to solve this issue, we consider as structures not the probability spaces themselves but their associated measure algebra. For a probability space $(X,\Sigma,\mu)$, its associated measure algebra $\mathrm{MAlg}(X,\mu)$ is the quotient set $\Sigma / \pN$ where $\pN$ denotes the $\sigma$-ideal of null sets. It inherits the Boolean operations $\vee, \cap, .^{-1}$ of $\Sigma$ and is endowed with a natural metric $d_\mu(\pi(A),\pi(B)) := \mu(A \bigtriangleup B)$, where $\pi$ is the quotient map.

Moreover, the correspondence between probability spaces and measure algebras is functorial, so that a pmp action on a probability space induces an action by automorphisms on its measure algebra.
\\

Following the latter remarks, we study the model theory of atomless measure algebras with a countable group $\Gamma$ acting by automorphisms. This work follows the one in \cite[Section 18]{yaacovModelTheoryMetric2008} about free actions of $\Z$ and the more general case of free actions of amenable groups treated by A.~Berenstein and C.~W.~Henson in an unpublished paper.

Without loss of generality, we restrict our study to actions of the free group over an infinite countable subset, $F_\infty$, as any action of a countable group can be seen as an action of $F_\infty$. Then one can see that the equivalence relation of elementary equivalence is weaker than approximate conjugacy but stronger than weak equivalence. This result highlights the link between model theory and the equivalence relations usually studied in ergodic theory.

For any IRS $\theta$ on $F_\infty$, we define a theory $\fA_\theta$ axiomatizing pmp actions with IRS $\theta$. By a result of G.~Elek (\cite[Thm. 2]{elekFiniteGraphsAmenability2012}), the hyperfiniteness of an action is determined by its IRS. We thus call an IRS $\theta$ hyperfinite if actions with IRS $\theta$ are hyperfinite.

By Theorem \ref{Elek}, in the context of hyperfinite actions, having the same IRS is equivalent to being elementarily equivalent. We prove:

\begin{thmi}
If $\theta$ is a hyperfinite IRS, then the theory $\fA_\theta$ is complete and model complete.
\end{thmi}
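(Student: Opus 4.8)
The plan is to derive both conclusions from Theorem~\ref{Elek}, from its factor-case strengthening announced above, and from soft facts of continuous logic. Throughout I identify a separable model of $\fA_\theta$ with a pmp action of $F_\infty$ on the unique atomless standard probability algebra having IRS $\theta$; since $\theta$ is hyperfinite, any such action is hyperfinite. For completeness, note first that $\fA_\theta$ has a model, since every IRS on $F_\infty$ is realized by some pmp action. By downward L\"owenheim--Skolem every model of $\fA_\theta$ has a separable elementary submodel, so it is enough to see that any two separable models are elementarily equivalent; but two such are hyperfinite actions with the same IRS, hence approximately conjugate by Theorem~\ref{Elek}, hence elementarily equivalent by the implication ``approximate conjugacy $\Rightarrow$ elementary equivalence'' established earlier. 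Thus $\fA_\theta$ is complete. (Alternatively, once model completeness is known, completeness follows from the joint embedding property: any two actions with IRS $\theta$ factor onto $(\mathrm{Sub}(F_\infty),\theta)$ with the conjugation action, and their relatively independent joining over this common factor is again a model of $\fA_\theta$ into which both embed.)

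For model completeness it suffices, by the standard ``sandwich'' argument (interleaving elementary chains of the two structures inside a common elementary extension), to show that every model of $\fA_\theta$ is existentially closed in every model of $\fA_\theta$ extending it, and a further L\"owenheim--Skolem reduction lets us assume both models separable. So let $(\mathcal A,\alpha)\subseteq(\mathcal B,\beta)$ be separable models, so that $(\mathcal A,\alpha)$ is a factor of $(\mathcal B,\beta)$ and both are hyperfinite with IRS $\theta$. Fix an exhaustion $F_0\subseteq F_1\subseteq\cdots$ of $F_\infty$ by finite sets and finite subsets $D_0\subseteq D_1\subseteq\cdots$ of $\mathcal A$ whose union is dense in $\mathcal A$. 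Feeding the data $F_n$, $\varepsilon=1/n$ and the prescribed set $D_n$ into the factor-case strengthening of Theorem~\ref{Elek}, I would obtain for each $n$ a measure-preserving Boolean embedding $\iota_n\colon\mathcal B\hookrightarrow\mathcal A$ that restricts to the identity on $D_n$ and satisfies $\sup_{b\in\mathcal B} d_\mu\big(\gamma^\alpha(\iota_n(b)),\,\iota_n(\gamma^\beta(b))\big)<1/n$ for all $\gamma\in F_n$.

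Now fix a non-principal ultrafilter $\mathcal U$ on $\N$ and let $\mathcal A^{\mathcal U}$ be the ultrapower of the model $(\mathcal A,\alpha)$, so that $\mathcal A\preceq\mathcal A^{\mathcal U}\models\fA_\theta$. I would define $g\colon\mathcal B\to\mathcal A^{\mathcal U}$ by $g(b)=[\iota_n(b)]_{\mathcal U}$. Since each $\iota_n$ is measure preserving, $g$ is an isometric Boolean embedding; since $\iota_n$ is the identity on $D_n$ and $\bigcup_nD_n$ is dense, $g$ coincides on $\mathcal A$ with the diagonal embedding; and since $\iota_n$ is $1/n$-equivariant on $F_n$, the map $g$ intertwines $\beta$ with the ultrapower action, hence is an embedding of the model $\mathcal B$ into the elementary extension $\mathcal A^{\mathcal U}$ of $\mathcal A$ that fixes $\mathcal A$. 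This exhibits $\mathcal A$ as existentially closed in $\mathcal B$, and the sandwich argument then delivers model completeness.

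All the model theory here is routine once the embeddings $\iota_n$ are available, so I expect the main obstacle to be the factor-case strengthening of Theorem~\ref{Elek} itself: one must show that when $(\mathcal A,\alpha)$ is a factor of $(\mathcal B,\beta)$ the pmp transformations witnessing their approximate conjugacy can be taken to fix a prescribed finite Boolean subalgebra of $\mathcal A$ while still approximately intertwining the two actions on a prescribed finite set of generators. This forces one to work inside the proof of Theorem~\ref{Elek} rather than merely invoking its statement, and is where the real effort lies.
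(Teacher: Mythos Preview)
Your argument is correct and rests on the same key ingredient as the paper, namely the ``factor-case strengthening'' of Theorem~\ref{Elek} (this is exactly Theorem~\ref{approximate parametrized conjugacy for factor actions} in the paper). The execution, however, is genuinely different.

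For model completeness the paper does \emph{not} go through existential closedness and ultraproducts. Instead it proves a lemma (Theorem~\ref{continuity of formulas}) asserting that the map $\alpha\mapsto\big(\varphi^{\pM_\alpha}(\bar a,\bar p)\big)_{\bar a}$ is uniformly continuous from $A(F_\infty,X,\mu)$ with the uniform topology to $\ell^\infty$. Given a separable inclusion $\pM_\alpha\subseteq\pM_\beta$ with factor map $\pi$, Theorem~\ref{approximate parametrized conjugacy for factor actions} places $\alpha$ in the uniform closure of $\{\rho^{-1}\beta\rho:\rho\text{ pmp},\ \rho(A_i)=B_i\}$; continuity then forces $\varphi^{\pM_\alpha}(\bar p)=\varphi^{\pM_\beta}(\bar p)$ for every formula and every tuple of parameters, so the inclusion is elementary outright. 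Your route packages the same approximate conjugacies $\rho_n$ into an ultrapower embedding $\mathcal B\hookrightarrow\mathcal A^{\mathcal U}$ over $\mathcal A$ and then appeals to Robinson's test. Both are clean; the paper's approach avoids the sandwich argument and gives elementarity in one stroke, while yours avoids the induction on formulas and is perhaps more portable to other metric settings.

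For completeness the paper takes precisely your parenthetical alternative: it first proves model completeness, then embeds any two separable models into their relatively independent joining over $\boldsymbol{\theta}$ (Lemma~\ref{joining over an irs}) and concludes by two applications of model completeness. Your primary route via Theorem~\ref{Elek} and ``approximate conjugacy $\Rightarrow$ elementary equivalence'' is also fine, but note that in the paper the latter implication is itself a consequence of Theorem~\ref{continuity of formulas}, so you are implicitly invoking the same continuity lemma.

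Your closing diagnosis is accurate: all the model-theoretic steps are soft, and the real content is Theorem~\ref{approximate parametrized conjugacy for factor actions}, which the paper proves by passing to bicolored Schreier graphings and using hyperfiniteness to reduce to finite-component pieces.
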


However, unlike in \cite[Section 18]{yaacovModelTheoryMetric2008} these theories do not admit quantifier elimination in general. We nevertherless prove in Theorem \ref{quantifier elimination} that there is a reasonable expansion of the theory which eliminates quantifier, and we then use this to prove

\begin{thmi}
If $\theta$ is a hyperfinite IRS, then the theory $\fA_\theta$ is stable and the stable independence relation given by non dividing admits a natural characterization in terms of the classical probabilistic independence of events (in a sense described in Definition \ref{stability}).
\end{thmi}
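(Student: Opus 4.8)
The plan is to establish stability by exhibiting a well-behaved independence relation and then identifying non-dividing with it. Since the theory $\fA_\theta$ does not eliminate quantifiers directly, the first step is to pass to the expanded language of Theorem \ref{quantifier elimination}, where quantifier elimination holds; stability and the independence calculus can then be computed on quantifier-free types, which in a measure algebra with a $\Gamma$-action are controlled by the joint measures of finite Boolean combinations of the (images of the) distinguished sets under the group elements. Concretely, for tuples $\bar a, \bar b, \bar c$ from a model, one defines $\bar a \ind_{\bar c} \bar b$ to mean that the $\Gamma$-invariant subalgebras generated by $\bar a \bar c$ and by $\bar b \bar c$ are probabilistically independent over the $\Gamma$-invariant subalgebra generated by $\bar c$ — i.e.\ for all $A$ in the first and $B$ in the second, $\mu(A \cap B \mid \mathcal C) = \mu(A \mid \mathcal C)\mu(B \mid \mathcal C)$ where $\mathcal C$ is the invariant algebra over $\bar c$. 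This is the content of Definition \ref{stability}.

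The second step is to verify that this relation satisfies the axioms characterizing forking independence in a stable theory: invariance under automorphisms, symmetry, transitivity (left and right), finite character, existence/extension (any type over $\bar c$ has a non-forking extension over any larger set), and local character / boundedness of the number of non-forking extensions. Invariance and symmetry are immediate from the symmetric definition of probabilistic independence; transitivity follows from the tower property of conditional expectation; finite character follows because independence of $\sigma$-algebras is tested on generating sets. The existence and stationarity statements are where the measure-theoretic content enters: given a quantifier-free type over $\mathcal C$ and an extension of $\mathcal C$, one must build a copy of the configuration that is conditionally independent over $\mathcal C$, which amounts to constructing a relatively independent joining of the corresponding $\Gamma$-systems over the common factor $\mathcal C$ — the standard relatively independent product of measure-preserving systems does exactly this, and it is again a pmp action with the correct IRS (the IRS only depends on the stabilizer distribution, which is unchanged by taking joinings over a common factor). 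One must check that this joining lives inside a model of $\fA_\theta$, using that $\fA_\theta$ is the model completion-like theory identified in Theorem B and that hyperfiniteness of the IRS passes to the relevant extension.

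The third step is to invoke the general theory (Ben Yaacov–Usvyatsov / Kim–Pillay in the continuous setting): a theory admitting an automorphism-invariant independence relation with the above properties is stable, and that relation coincides with non-dividing. Thus stability of $\fA_\theta$ follows, and the characterization of the stable independence relation in terms of classical probabilistic independence is precisely the identification made in the first two steps. An alternative, more self-contained route for stability alone is to bound the number of quantifier-free $1$-types over a model of density character $\kappa$: such a type is determined by a finitely-additive consistent assignment of measures to the countably many Boolean combinations of $\gamma$-translates of the new element with finitely many parameters, so the type space has density character at most $\kappa$, giving $\kappa$-stability for all infinite $\kappa$; this is the standard counting argument for measure algebras as in \cite[Section 16]{yaacovModelTheoryMetric2008}, adapted to the $\Gamma$-action by noting the language adds only the unary function symbols $\gamma$.

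I expect the main obstacle to be the stationarity / full existence clause: showing that the relatively independent joining can always be realized inside a model of $\fA_\theta$ with the prescribed hyperfinite IRS, and that this joining is unique up to the appropriate notion, so that non-dividing independence genuinely agrees with probabilistic independence rather than merely being implied by it. This requires carefully combining the model-completeness from Theorem B (to ensure the joining embeds elementarily) with the fact, from Elek's theorem and its IRS-invariance, that the IRS is preserved under the joining construction; the quantifier-elimination expansion of Theorem \ref{quantifier elimination} is what makes the types transparent enough for this verification to go through.
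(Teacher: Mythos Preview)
Your proposal is correct and matches the paper's approach: define probabilistic independence over definable closures (Definition~\ref{stability}), verify the axioms of a stable independence relation --- with existence supplied by the relative independent joining of Theorem~\ref{relative independent joining of stone spaces} and stationarity via the quantifier elimination of Theorem~\ref{quantifier elimination} reducing to $\pL$-types as in Proposition~\ref{types in AMA} --- and then invoke the general characterization theorem. One refinement worth making explicit: the base over which you condition is not merely the $\Gamma$-invariant subalgebra generated by $\bar c$ but its definable closure $\langle F_\infty \bar c \cup \pI_\pM\rangle$ (Theorem~\ref{definable closure}), and it is precisely the automatic presence of $\pI_\pM$ in every definable closure that guarantees the relative independent joining has IRS~$\theta$ and hence that existence and stationarity go through.
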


\textit{Acknowledgments:}
I am very grateful to my PhD advisors Fran\c{c}ois~Le~Ma\^itre and Todor~Tsankov for suggesting the subject of this paper and for their valuable advice throughout the preparation and writing of this article. I would also like to thank Tom\'as~Ibarluc\'ia and Robin~Tucker-Drob for many helpful discussions and suggestions.


\section{The generalization of Rokhlin Lemma} \label{the rokhlin lemma}

\subsection{Graphings}

\begin{definition}
A \textbf{graph} $G$ is a pair $(\rV(G),\rE(G))$ where $\rV(G)$ is a set and $\rE(G)$ is an irreflexive and symmetric binary relation on $\rV(G)$. Elements of $\rV(G)$ are called \textbf{vertices} of $G$ and elements of $\rE(G)$ are called \textbf{edges} of $G$.
\end{definition}

For $G$ a graph, for each $v \in \rV(G)$ we let $\mathrm{deg}_G(v) = |\{u \in \rV(G) : (v,u) \in \rE(G)\}|$ and we call $\sup_{v \in \rV(G)} \mathrm{deg}_G(v) \in \N \cup \{\infty\}$ the degree bound of $G$.

\begin{definition}
An \textbf{isomorphism between the graphs $G$ and $H$} is a bijection $f \colon \rV(G) \rightarrow \rV(H)$ such that $\forall x,y \in \rV(G)$, $(x,y) \in \rE(G) \Leftrightarrow (f(x),f(y)) \in \rE(H)$.
\end{definition}

\begin{definition}
Let $G$ be a graph, $A \subseteq \rV(G)$ and $B \subseteq \rE(G)$. Then we define :
\begin{itemize}
    \item $\rV_{\inc}^G(B) = \{v \in \rV(G) : \exists u \in \rV(G),\ (u,v) \in B \vee (v,u) \in B\}$ the set of \textbf{vertices incident to $B$}.
    \item $\rE_{\inc}^G(A) = \{(a,v) \in \rE(G) : a \in A\}$ the set of \textbf{edges incident to $A$}.
\end{itemize}

We will write $\rV_{\inc}(B)$ and $\rE_{\inc}(A)$ when the context makes clear which graph $G$ is considered.
\end{definition}

\begin{definition}
Let $G$ be a graph. A \textbf{subgraph} of $G$ is a graph $H$ such that $\rV(H) = \rV(G)$ and $\rE(H) \subseteq \rE(G)$. In this case, we write $H \subseteq G$.

If $\rV \subseteq \rV(G)$, the \textbf{subgraph of $G$ induced by $V$} is the graph $(\rV(G),\rE(G) \cap V \times V)$.  Nevertheless, in many cases it will be convenient to identify the induced graph on $V$ and the graph $(V,\rE(G) \cap V \times V)$ and therefore see the induced graph on $V$ as a graph on the set of vertices $V$.
\end{definition}

In general, we write $G \simeq H$ to indicate that $G$ and $H$ are isomorphic.

\begin{definition}
A \textbf{standard Borel space} is a measurable space isomorphic to $[0,1]$ equipped with its Borel $\sigma$-algebra. We call Borel the maps between two standard Borel spaces which are measurable.
\end{definition}

Let us give some notations regarding probability spaces :
\begin{itemize}
\item If $X$ is a measurable space, we denote by $\fP(X)$ the set of probability measures on $X$.
\item If $(X,\mu)$ is a probability space and $P$ is a property, we write $\forall^* x \in X\ P(x)$ for $\mu(\{x \in X : P(x)\})=1$ and $\exists^* x \in X\ P(x)$ for $\mu(\{x \in X : P(x)\})>0$.
\item If $(X,\mu)$ is a probability space, $Y$ is a measurable space and $T : X \rightarrow Y$ is a measurable map, we write $T_*\mu$ for the pushforward of $\mu$ by $T$, that is the measure in $\fP(Y)$ defined by $T_*\mu(A) = \mu(T^{-1}(A))$ for any Borel subset $A \subseteq Y$.
\end{itemize}

\begin{definition}
Let $X$ be a standard Borel space and $\mR$ be a Borel (as a subset of the measurable space $X \times X$) equivalence relation on $X$. We let $[\mR]$ be the group of Borel automorphisms of $X$ whose graphs are contained in $\mR$. We say that a Borel probability measure $\mu$ on $X$ is \textbf{$\mR$-invariant} if every element of $[\mR]$ preserves the measure $\mu$, namely, $\forall T \in [\mR],\ T_*\mu = \mu$.
\end{definition}

\begin{proposition}[{\cite[Section 8]{kechrisIIAmenabilityHyperfiniteness2004}}] \label{left and right measures}
With the same notations as above, for any  $\mu \in \fP(X)$, we can define two measures $\mu_l$ and $\mu_r$ on $\mR$ by
\begin{itemize}
    \item for all non-negative Borel $f \colon \mR \rightarrow [0,\infty]$, $\int_\mR f\ d\mu_l = \int_X \underset{y \in [x]_\mR}{\sum} f(x,y)\ d\mu(x),$
    \item for all non-negative Borel $f \colon \mR \rightarrow [0,\infty]$, $\int_\mR f\ d\mu_r = \int_X \underset{y \in [x]_\mR}{\sum} f(y,x)\ d\mu(x),$
\end{itemize}
where $[x]_\mR$ denotes the equivalence class of $x$ for $\mR$.
Then $\mu_l = \mu_r$ if and only if $\mu$ is $\mR$-invariant.
\end{proposition}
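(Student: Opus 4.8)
The plan is to reduce the statement to a single bookkeeping identity for graphs of Borel automorphisms. The sums over equivalence classes presuppose that $\mR$ has countable classes, so I treat $\mR$ as a countable Borel equivalence relation; then by the Feldman--Moore theorem one may write $\mR=\bigcup_{n\in\N}\mathrm{graph}(g_n)$ for Borel automorphisms $g_n$ of $X$, each lying in $[\mR]$ by construction. Disjointifying, set $A_n=\{x\in X: g_n x\notin\{g_0x,\dots,g_{n-1}x\}\}$; then $\mR=\bigsqcup_n\mathrm{graph}(g_n|_{A_n})$, and for each $x$ the map $n\mapsto g_n x$ restricts to a bijection from $\{n:x\in A_n\}$ onto $[x]_\mR$. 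This is exactly what turns the fiberwise sums defining $\mu_l$ and $\mu_r$ into honest series of integrals with no overcounting.

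For the implication ``$\mu$ $\mR$-invariant $\Rightarrow\mu_l=\mu_r$'', assume $\mu$ is $\mR$-invariant, so each $g_n$ preserves $\mu$. Using the bijection above together with Tonelli's theorem, for every non-negative Borel $f\colon\mR\to[0,\infty]$ one gets
$$\int_\mR f\,d\mu_l=\sum_n\int_{A_n}f(x,g_nx)\,d\mu(x)\qquad\text{and}\qquad\int_\mR f\,d\mu_r=\sum_n\int_{g_nA_n}f(g_n^{-1}x,x)\,d\mu(x).$$
In the $n$-th term of the second series I substitute $x=g_nu$ with $u$ ranging over $A_n$; since $(g_n)_*\mu=\mu$, this change of variable rewrites $\int_{g_nA_n}f(g_n^{-1}x,x)\,d\mu(x)$ as $\int_{A_n}f(u,g_nu)\,d\mu(u)$, so the two series agree term by term and $\mu_l=\mu_r$.

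For the converse, assume $\mu_l=\mu_r$, fix $T\in[\mR]$ and a Borel set $F\subseteq X$, and test the equality of measures against the indicator of the Borel set $E=\{(x,Tx):x\in F\}$, which is contained in $\mathrm{graph}(T)\subseteq\mR$. The $\mu_l$-integral of $\1_E$ counts, for each $x$, the $y$ with $(x,y)\in E$, hence equals $\mu(F)$; the $\mu_r$-integral counts the $y$ with $(y,x)\in E$, hence equals $\mu(T(F))$. Thus $\mu(T(F))=\mu(F)$ for every Borel $F$, i.e. $T_*\mu=\mu$; as $T\in[\mR]$ was arbitrary, $\mu$ is $\mR$-invariant.

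The only ingredient beyond elementary measure theory is the structural fact that a countable Borel equivalence relation is a countable union of graphs of Borel automorphisms; everything else is Tonelli's theorem and a measure-preserving change of variable. The step requiring the most care is the disjointification of $\mR$ into the $\mathrm{graph}(g_n|_{A_n})$: it must be arranged precisely so that the defining sums over $[x]_\mR$ become exactly the stated series, which is the point at which the argument could be made to fail by a double-counting slip rather than by any deep difficulty.
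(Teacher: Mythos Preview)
The paper does not give its own proof of this proposition: it is stated with a citation to Kechris and no argument follows. Your proof is correct and is essentially the standard one found in that reference---Feldman--Moore to decompose $\mR$ into graphs of Borel automorphisms, a disjointification to avoid overcounting the fibres, Tonelli plus a measure-preserving change of variable for the forward direction, and testing against indicators of graphs $\{(x,Tx):x\in F\}$ for the converse.

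Two small remarks. First, your assumption that $\mR$ has countable classes is indeed needed (and implicit in the paper's context, since the only equivalence relations used are those induced by locally countable graphings). Second, in computing $\int f\,d\mu_r$ you silently switch from enumerating $[x]_\mR$ via $n\mapsto g_n x$ (as you set up for $\mu_l$) to enumerating pairs $(y,x)$ via the graph decomposition $(y,x)=(y,g_ny)$; both are legitimate and give the same sum, but it would be cleaner to say explicitly which parametrisation you are using for the $\mu_r$ computation, since the index set changes from $\{n:x\in A_n\}$ to $\{n:x\in g_nA_n\}$.
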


\begin{definition}
Let $\pG$ be a Borel graph on a standard probability space $(X,\mu)$ which has countable connected components. Then the  equivalence relation $\mR_\pG$ induced by $\pG$ is the equivalence relation on $(X,\mu)$ whose classes are the connected components of $\pG$. By the Lusin-Novikov theorem, $\mR_\pG$ is a Borel equivalence relation. We say that $\pG$ is a \textbf{graphing} when $\mu$ is $\mR_\pG$-invariant.
\end{definition}

We can define a measure on the set of edges of a graphing by:

\begin{definition}
Let $\pG(X,\mu)$ be a graphing and $Z \subseteq \rE(\pG)$ be a Borel set. The \textbf{edge measure} of the set $Z$ is defined by $\mu_\rE(Z) := \mu_l(Z) = \mu_r(Z)$, where $\mu_l$ and $\mu_r$ are defined with respect to the Borel equivalence relation $\mR_\pG$.
\end{definition}

For a graphing of degree bound $d$, the edge measure of a set of edges is bounded by the measure of the vertices incident to this set. Namely, for all Borel $Z \subseteq \rE(\pG)$ we have
$$\frac{1}{2}\mu(\rV_{\inc}(Z)) \leq \mu_\rE(Z) \leq d \mu(\rV_{\inc}(Z)).$$

\subsection{Classical Rokhlin Lemma}

A measure-preserving transformation is called \textbf{aperiodic} if almost all its orbits are infinite.

Rokhlin Lemma states that if $T$ is an aperiodic measure-preserving transformation of a  standard probability space $(X,\mu)$, then for every $n \in \N$ and every $\varepsilon >0$, there is a Borel subset $A \subseteq X$ such that the sets $A,T A,\dots, T^{n-1} A$ are pairwise disjoint and
$$\mu \left( \overset{n-1}{\underset{i=0}{\bigsqcup}} T^i A \right) > 1 - \varepsilon .$$

What we present in this paper is not a generalization of Rokhlin Lemma itself but rather of one of its important and well-known consequences:

\begin{corollary}[Uniform Approximation Theorem, {\cite[Theorem 2.2]{kechrisGlobalAspectsErgodic2010}}] \label{rokhlin corollary}
Any two aperiodic measure-preserving transformations $\tau_1$ and $\tau_2$ on standard probability spaces $(X,\mu)$ and $(Y,\nu)$ are approximately conjugate.
\end{corollary}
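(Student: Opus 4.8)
The plan is to derive this from the classical Rokhlin Lemma recalled just above, by aligning two Rokhlin towers of the same height and the same base measure. First I would reduce to a single space: an aperiodic measure-preserving transformation has no atoms, since under a Borel bijection the orbit of an atom would consist of atoms of equal measure, hence be finite and non-null, contradicting aperiodicity. Thus $(X,\mu)$ and $(Y,\nu)$ are both atomless standard probability spaces, hence isomorphic; conjugating $\tau_2$ by a measure isomorphism $(X,\mu)\to(Y,\nu)$ we may assume $(Y,\nu)=(X,\mu)$, and it then suffices, given $\varepsilon>0$, to build a pmp transformation $S$ of $(X,\mu)$ with $\mu(\{x:\tau_2 x\neq S\tau_1 S^{-1}x\})<\varepsilon$.

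Fix $n\in\N$ with $1/n<\varepsilon/2$. Applying Rokhlin's Lemma to $\tau_1$ and to $\tau_2$ (with error bound $\varepsilon/2$ and height $n$) yields Borel bases $A,B\subseteq X$ such that $A,\tau_1 A,\dots,\tau_1^{n-1}A$ are pairwise disjoint with union of measure $>1-\varepsilon/2$, and likewise for $B,\tau_2 B,\dots,\tau_2^{n-1}B$. After swapping the two transformations if necessary we may assume $\mu(A)\le\mu(B)$, and we replace $B$ by a Borel subset $B'\subseteq B$ with $\mu(B')=\mu(A)$. Then $B',\tau_2 B',\dots,\tau_2^{n-1}B'$ are still pairwise disjoint, and writing $R_1:=X\setminus\bigsqcup_{i=0}^{n-1}\tau_1^i A$ and $R_2:=X\setminus\bigsqcup_{i=0}^{n-1}\tau_2^i B'$ we get $\mu(R_2)=1-n\mu(B')=1-n\mu(A)=\mu(R_1)<\varepsilon/2$.

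Now I would build $S$. For each $i$ the level $\tau_1^i A$ has the same measure as the level $\tau_2^i B'$; since a Borel subset of an atomless standard space is itself atomless standard, fix a measure-preserving Borel isomorphism $\phi\colon A\to B'$ and set $S(\tau_1^i a):=\tau_2^i\phi(a)$ for $a\in A$ and $0\le i\le n-1$, which is a measure-preserving bijection from $\bigsqcup_{i=0}^{n-1}\tau_1^i A$ onto $\bigsqcup_{i=0}^{n-1}\tau_2^i B'$ carrying each level to the corresponding one. Fix also a measure-preserving Borel isomorphism $R_1\to R_2$ and let $S$ agree with it on $R_1$; then $S$ is a pmp transformation of $(X,\mu)$. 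For $a\in A$ and $0\le i\le n-2$ we have $S(\tau_1^{i+1}a)=\tau_2^{i+1}\phi(a)=\tau_2(S\tau_1^i a)$, so $S\tau_1 S^{-1}=\tau_2$ holds on the set $S\bigl(\bigsqcup_{i=0}^{n-2}\tau_1^i A\bigr)$, whose measure is $(n-1)\mu(A)=\tfrac{n-1}{n}(1-\mu(R_1))>(1-\varepsilon/2)^2>1-\varepsilon$. Hence $\tau_2$ and $S\tau_1 S^{-1}$ differ only on a set of measure $<\varepsilon$, as required.

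Once Rokhlin's Lemma is available the argument is essentially bookkeeping; the single point deserving care is the alignment of the two towers — arranging that the two bases, and therefore automatically the two leftover sets, have equal measure, so that the piecewise-defined $S$ is a genuine bijection of all of $X$ — together with the small but essential observation that aperiodicity rules out atoms, which is what lets the two abstract standard probability spaces be treated as one.
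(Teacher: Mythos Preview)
Your proof is correct and is exactly the classical derivation the paper alludes to: the corollary is stated there without proof, as a well-known consequence of Rokhlin's Lemma with a citation to Kechris, and your tower-alignment argument is the standard one. One minor remark: in this paper a \emph{standard probability space} is by definition isomorphic to $([0,1],\lambda)$ and hence already atomless, so your opening reduction ruling out atoms, while correct, is not actually needed under the paper's conventions.
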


An aperiodic measure-preserving transformation can be seen as a free action of $\Z$. The goal of this section is to generalize the latter Corollary to hyperfinite actions of a countable group which have a given IRS (i.e. Invariant Random Subgroup, defined in subsection \ref{irs}).

\subsection{Hyperfiniteness} \label{hyperfiniteness}

The key point on the proof of Uniform Approximation Theorem \ref{rokhlin corollary} is that the dynamics of an aperiodic automorphism are understood on arbitrary large sets. In the section we define the notion of hyperfiniteness of a pmp action, which allows one to make this idea work in a much more general context.
\\

\begin{definition}[See "approximately finite group" in \cite{Dye1959}]
A pmp action $\Gamma \act{} (X,\mu)$ is said to be \textbf{hyperfinite} if for every finite $S \subseteq \Gamma$ and every $\varepsilon>0$, there exists a finite group $G$ acting in a measure-preserving way on $(X,\mu)$ such that
$$\mu \left( \{ x \in X : S \cdot x \subseteq G \cdot x \} \right) > 1 - \varepsilon.$$
\end{definition}

What we are mostly interested in is the characterization of hyperfiniteness for graphings.

\begin{definition}
Let $\pG(X,\mu)$ be a graphing. $\pG$ is called \textbf{hyperfinite} if for any $\varepsilon >0$ there exists $M \in \N$ and a Borel set $Z \subseteq \rE(\pG)$ such that $\mu_\rE(Z) < \varepsilon$ and the subgraphing $\pH = \pG \setminus Z$ has components of size at most $M$.
\end{definition}

\begin{definition}
Let $F$ be a finite set. An \textbf{$F$-colored graphing} on a standard probability space $(X,\mu)$ is a graphing $\pG(X,\mu)$ endowed with a Borel map $\varphi_\pG \colon \rE(\pG) \rightarrow F$. For $(x,y) \in \rE(\pG)$, we call $\varphi_\pG(x,y)$ the color of $(x,y)$.

Additionally, for $c \in F$, we write $\rE^c(\pG)$ for the set of edges colored by $c$, namely $\varphi_\pG^{-1}(c)$.
\end{definition}

We will simply write $\pG$ and consider the color implicitly when dealing with colored graphings.

\begin{definition}
Let $\pG(X,\mu)$ and $\pG'(Y,\nu)$ be two $F$-colored graphings. A \textbf{colored graphing factor map} $\pi \colon Y \rightarrow X$ is a pmp map such that for almost all $y \in Y$, $\pi \restriction_{[y]_{\pG'}}$ is an isomorphism of $F$- colored graphs.

We say that $\pG$ is a colored factor of $\pG'$ and we write $\pG \underset{c}{\sqsubseteq} \pG'$ if there is a colored factor map $\pi \colon Y \rightarrow X$.
\end{definition}

Let $\Gamma$ be a group and $S$ be a finite subset of $\Gamma$. Let us consider a measure-preserving action $\Gamma \act{\alpha} (X,\mu)$. We define a $\mP(S)$-colored graphing $\pG_{\alpha,S}$ on $(X,\mu)$ by $(x,y) \in \rE(\pG_{\alpha,S})$ if and only if there is a $s \in S$ such that $y=sx$ and we color the edges of $\pG_{\alpha,S}$ by letting the color of an edge $(x,y)$ be $\{s \in S : y=sx\}$. We call it the \textbf{Schreier graph} of the action $\alpha$ relative to $S$.

\begin{lemma} \label{hyperfiniteness passes to graphings}
Let $\Gamma$ be a countable group and let $\Gamma \act{\alpha} (X,\mu)$ be a pmp action. Then $\alpha$ is hyperfinite if and only if for every finite $S \subseteq \Gamma$, $\pG_{\alpha,S}$ is hyperfinite.
\end{lemma}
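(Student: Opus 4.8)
The plan is to prove the two implications separately, translating freely between the combinatorial data of a finite group action and that of a Schreier graphing. Throughout, fix a countable group $\Gamma$.

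For the forward direction, suppose $\alpha$ is hyperfinite and fix a finite $S \subseteq \Gamma$; we may assume $S$ is symmetric and contains $e$. Given $\varepsilon > 0$, hyperfiniteness of $\alpha$ applied to the set $S^2$ (or some fixed power of $S$ large enough to absorb the colors) and a suitable $\varepsilon'$ yields a finite group $G$ acting in a pmp way on $(X,\mu)$ with $\mu(\{x : S^2 \cdot x \subseteq G \cdot x\}) > 1 - \varepsilon'$. The $G$-orbits partition (a conull part of) $X$ into finite pieces; on the "good" set $X_0$ where $S \cdot x \subseteq G \cdot x$, every $S$-edge of $\pG_{\alpha,S}$ stays inside a single $G$-orbit. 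So I would take $Z$ to be the set of $S$-edges with at least one endpoint outside $X_0$: removing $Z$ leaves a graphing whose components refine the $G$-orbits, hence have size at most $|G|$. It remains to bound $\mu_\rE(Z)$: since $\pG_{\alpha,S}$ has degree bound $|S|$, the edge-measure inequality $\mu_\rE(Z) \le |S| \cdot \mu(\rV_{\inc}(Z)) \le |S| \cdot \mu(X \setminus X_0) < |S|\varepsilon'$, which is $< \varepsilon$ for $\varepsilon'$ chosen appropriately. One subtlety: $|G|$ is not a priori uniformly bounded, but the definition of hyperfinite graphing allows $M$ to depend on $\varepsilon$, so this is fine.

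For the converse, suppose $\pG_{\alpha,S}$ is hyperfinite for every finite $S$; fix a finite $S \subseteq \Gamma$ (again symmetric, containing $e$) and $\varepsilon > 0$. Apply hyperfiniteness of $\pG_{\alpha,S}$ to get $M$ and a Borel $Z \subseteq \rE(\pG_{\alpha,S})$ with $\mu_\rE(Z) < \varepsilon/(2|S|)$ such that $\pH := \pG_{\alpha,S} \setminus Z$ has all components of size $\le M$. The components of $\pH$ give a Borel partition of a conull subset of $X$ into finite pieces $\{C_i\}$; by standard Borel combinatorics (a Borel version of the fact that finite sets of bounded size can be simultaneously well-ordered/indexed measurably, using Lusin–Novikov) one obtains a Borel map assigning to each piece a bijection with $\{0,\dots,|C_i|-1\}$, and from this a finite group $G$ — for instance $G = \bigoplus$ of symmetric groups, or more simply a single finite group containing each $\mathrm{Sym}(k)$ for $k \le M$ acting blockwise — acting in a measure-preserving way on $X$ whose orbits are exactly the pieces $C_i$. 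For a point $x$ that is not incident to $Z$, all of $S \cdot x$ lies in the $\pH$-component of $x$, which equals the $G$-orbit of $x$; hence $S \cdot x \subseteq G \cdot x$. The set of $x$ incident to $Z$ has measure at most $2\mu_\rE(Z) < \varepsilon/|S| \le \varepsilon$ by the lower edge-measure inequality $\tfrac12\mu(\rV_{\inc}(Z)) \le \mu_\rE(Z)$, so $\mu(\{x : S \cdot x \subseteq G \cdot x\}) > 1 - \varepsilon$, as required.

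The main obstacle is the Borel bookkeeping in the converse: producing, in a measurable way, a genuine finite-group action on $(X,\mu)$ whose orbits coincide with the bounded components of $\pH$. The existence of a Borel system of bijections (equivalently, a Borel transversal together with a Borel linear order on each fiber) is where one must invoke Lusin–Novikov / the Feldman–Moore-style selection theorems rather than just hand-wave; once that is in place, the group and its measure-preserving action are assembled routinely and the measure estimates are immediate from the edge-measure inequalities recorded after the definition of the edge measure.
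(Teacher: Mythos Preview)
Your proposal is correct and follows essentially the same approach as the paper: in the forward direction you remove the edges touching the ``bad'' set and use the $G$-orbits to bound component size, and in the converse you build a finite group action whose orbits are the bounded $\pH$-components and control the exceptional set via the edge-measure inequality. The only cosmetic differences are that the paper uses the product $\prod_{n\le M}\Z/n\Z$ (acting by cyclic shifts induced from a Borel linear order on $X$) rather than symmetric groups plus Lusin--Novikov, and it applies hyperfiniteness directly to $S$ with $\mu_\rE(Z)<\varepsilon/2$, avoiding your extra $|S|$ and $S^2$ padding.
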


\begin{proof}
Suppose $\alpha$ is hyperfinite and let $S \subseteq \Gamma$ be finite and $\varepsilon >0$.

By hyperfiniteness, there exists a finite group $G$ along with a pmp action $G \act{} (X,\mu)$ such that $\mu(\{x \in X : S \cdot x \subseteq G \cdot x\}) > 1 - \varepsilon$. In particular, when restricted to the set $\{x \in X : S \cdot x \subseteq G \cdot x\}$, the Schreier graph $\pG_{\alpha,S}$ has finite components of size less than $|G|$.
\\

For the converse, suppose that for any $S \subseteq \Gamma$ finite, the graphing $\pG_{\alpha,S}$ is hyperfinite.

Let $S \subseteq \Gamma$ be finite and let $\varepsilon >0$. Then there exist $Z \subseteq \rE(\pG_{\alpha,S})$ Borel and $M \in \N$ such that $\mu_\rE(Z) < \frac{\varepsilon}{2}$ and $\pG_{\alpha,S} \setminus Z$ has components of size at most $M$.

We define a pmp action of $\prod_{n \leq M} \Z / n\Z$ on $(X,\mu)$ as follows :

Since $(X,\mu)$ is a standard probability space, there is a Borel linear ordering $<$ of $X$. This induces, for $n \leq M$, an action of $\Z / n\Z$ on the set of elements of $\pG_{\alpha,S} \setminus Z$ whose component is of size $n$ by shifting any component according to the order $<$.

It follows that $\prod_{n \leq M} \Z / n \Z$ acts as a product on $X \setminus Z$ in a pmp way, and we extend this action to the whole $X$ by letting $\prod_{n \leq M} \Z / n \Z$ act trivially on $Z$.

One can easily check that for $x \notin \rV_{\inc}(Z)$, $S \cdot x$ is exactly the set of neighbors of $x$ in $\pG_{\alpha,S} \setminus Z$ and thus it is contained in $\left[ x \right]_{\pG_{\alpha,S} \setminus Z} = \left( \prod_{n \leq M} \Z / n \Z \right) \cdot x$. Moreover, $\mu(\rV_{\inc}(Z)) \leq 2 \mu_\rE(Z) < \varepsilon$ so we conclude that $\alpha$ is hyperfinite.
\end{proof}

\subsection{Invariant Random Subgroups} \label{irs}

Let $\Gamma \act{\alpha} (X,\mu)$ be a measure-preserving action of the countable group $\Gamma$. With this action we can associate a probability measure on the Polish space of subgroups of $\Gamma$ as follows. Consider the compact Polish space $\{0,1\}^\Gamma$. We let $\mathrm{Sub}(\Gamma)$ be the closed subset of $\{0,1\}^\Gamma$ consisting of the subgroups of $\Gamma$. Then $\mathrm{Sub}(\Gamma)$ is a compact Polish space.

We have a natural map $\mathrm{Stab}^\alpha \colon X \rightarrow \mathrm{Sub}(\Gamma)$ defined by $x \mapsto \mathrm{Stab}^\alpha(x) = \{g \in \Gamma : g^\alpha (x) = x\}$ and that gives us a probability measure $\mathrm{Stab}^\alpha_*\mu \in \fP(\mathrm{Sub}(\Gamma))$ that we call the Invariant Random Subgroup (IRS in short) of $\alpha$ and denote by $\theta_\alpha$. Moreover, $\Gamma$ acts on $\mathrm{Sub}(\Gamma)$ by conjugacy and the well known formula $\mathrm{Stab}^\alpha(gx) = g \mathrm{Stab}^\alpha(x) g^{-1}$ implies that the map $\mathrm{Stab}^\alpha$ is equivariant. Therefore, $\theta_\alpha$ is a $\Gamma$-invariant measure on $\mathrm{Sub}(\Gamma)$. We thus define the general notion of an \textbf{IRS} on $\Gamma$ to be a probability measure on $\mathrm{Sub}(\Gamma)$ invariant for the action $\Gamma \act{} \mathrm{Sub}(\Gamma)$ by conjugacy.
\\

G.~Elek proved in \cite[Thm. 2]{elekFiniteGraphsAmenability2012} that two pmp actions of a countable group $\Gamma$ with the same IRS are either both hyperfinite or both non-hyperfinite.
\\

Moreover, Abert, Glasner and Virag proved in \cite[Prop. 13]{abertKestenTheoremInvariant2014} that any IRS can be obtained as the IRS associated to a pmp action.

We can thus express hyperfiniteness as a property of the IRS itself:

\begin{definition}
Let $\Gamma$ be a countable group. An IRS $\theta$ on $\Gamma$ is called \textbf{hyperfinite} if one of the following two equivalent statements is satisfied :
\begin{enumerate}
    \item There exists a hyperfinite pmp action which has IRS $\theta$.
    \item Every pmp action which has IRS $\theta$ is hyperfinite.
\end{enumerate}
\end{definition}

\begin{definition} \label{factor}
Let $\Gamma \act{\alpha} (X,\mu)$ and $\Gamma \act{beta} (Y,\nu)$. An \textbf{action factor map} $\pi \colon Y \rightarrow X$ is a measure-preserving map such that $\forall^* y \in Y\ \forall \gamma \in \Gamma,\ \pi(\gamma^\beta y) = \gamma^\alpha \pi(y)$.

We say that $\alpha$ \textbf{is a factor of} $\beta$ and we write $\alpha \sqsubseteq \beta$ if there exists an action factor map $\pi \colon Y \rightarrow X$.
\end{definition}

\begin{lemma} \label{action factor with same irs implies same stabilizers}
Let $\alpha,\beta$ be two actions of a countable group $\Gamma$ on standard probability spaces $(X,\mu)$ and $(Y,\nu)$. Suppose that there is an action factor map $\pi \colon Y \rightarrow X$ for $\alpha$ and $\beta$ and that $\theta_\alpha = \theta_\beta$. Then $\forall^* y \in Y,\ \mathrm{Stab}^\alpha(\pi(y)) = \mathrm{Stab}^\beta(y)$.
\end{lemma}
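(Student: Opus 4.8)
The plan is to exploit the fact that a factor map can only shrink stabilizers, so that with the equality of IRS it must preserve them almost everywhere. First I would observe the easy inclusion: if $\pi(\gamma^\beta y) = \gamma^\alpha \pi(y)$ for all $\gamma$, then whenever $\gamma^\beta y = y$ we get $\gamma^\alpha \pi(y) = \pi(y)$, so $\mathrm{Stab}^\beta(y) \subseteq \mathrm{Stab}^\alpha(\pi(y))$ for every $y$ in the conull set where the equivariance identity holds. Consequently $\theta_\beta = \mathrm{Stab}^\beta_* \nu$ and $\theta_\alpha = \mathrm{Stab}^\alpha_* \pi_*\nu = \mathrm{Stab}^\alpha_* \mu$ are related by: the pushforward of $\nu$ under $y \mapsto (\mathrm{Stab}^\beta(y), \mathrm{Stab}^\alpha(\pi(y)))$ is a coupling of $\theta_\beta$ and $\theta_\alpha$ supported on the set $\{(H,K) \in \mathrm{Sub}(\Gamma)^2 : H \subseteq K\}$.

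Next I would use $\theta_\alpha = \theta_\beta =: \theta$ together with this coupling. The point is purely measure-theoretic: if $\theta$ is a probability measure on $\mathrm{Sub}(\Gamma)$ and we have a coupling $\lambda$ of $\theta$ with itself concentrated on $\{H \subseteq K\}$, then $\lambda$ is in fact concentrated on the diagonal $\{H = K\}$. To see this, enumerate $\Gamma = \{\gamma_1, \gamma_2, \dots\}$ and note that for each $n$, the event $\gamma_n \in K \setminus H$ has, under $\lambda$, probability $\theta(\{K : \gamma_n \in K\}) - \lambda(\{(H,K): \gamma_n \in H \text{ and } \gamma_n \in K\})$; since $\{H \subseteq K\}$ forces $\gamma_n \in H \Rightarrow \gamma_n \in K$, the second term equals $\lambda(\{\gamma_n \in H\}) = \theta(\{H : \gamma_n \in H\})$, which equals the first term because both marginals are $\theta$. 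Hence $\lambda(\gamma_n \in K \setminus H) = 0$ for every $n$, and taking a countable union over $n$ gives $\lambda(\{(H,K) : H \subsetneq K\}) = 0$, i.e. $\lambda$ lives on the diagonal. Pulling this back through the map $y \mapsto (\mathrm{Stab}^\beta(y), \mathrm{Stab}^\alpha(\pi(y)))$ yields $\mathrm{Stab}^\alpha(\pi(y)) = \mathrm{Stab}^\beta(y)$ for $\nu$-almost every $y$, which is the claim.

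The only mild subtlety — and the closest thing to an obstacle — is making sure the relevant sets are measurable and that the marginal computation is legitimate: one needs that $y \mapsto \mathrm{Stab}^\beta(y)$ and $y \mapsto \mathrm{Stab}^\alpha(\pi(y))$ are Borel maps into $\mathrm{Sub}(\Gamma)$ (which follows since each coordinate function $H \mapsto \1[\gamma \in H]$ is continuous on $\mathrm{Sub}(\Gamma) \subseteq \{0,1\}^\Gamma$ and $\mathrm{Stab}^\alpha$, $\mathrm{Stab}^\beta$ are Borel as noted in Subsection \ref{irs}), that $\pi$ is measure-preserving so $\pi_*\nu = \mu$ and the marginal of the coupling really is $\theta_\alpha$, and that the cylinder sets $\{\gamma_n \in H\}$ generate the Borel $\sigma$-algebra so that concentration on each of countably many such sets upgrades to concentration on the diagonal. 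All of these are routine once phrased correctly, so the heart of the argument is the short counting identity in the second paragraph.
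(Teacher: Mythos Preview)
Your proof is correct and follows essentially the same approach as the paper: both first observe the almost-sure inclusion $\mathrm{Stab}^\beta(y)\subseteq\mathrm{Stab}^\alpha(\pi(y))$ from equivariance, and then use that for each fixed $\gamma\in\Gamma$ the two marginals assign the same mass to $\rN_\gamma=\{\Lambda:\gamma\in\Lambda\}$, forcing $\nu\{y:\gamma\in\mathrm{Stab}^\alpha(\pi(y))\setminus\mathrm{Stab}^\beta(y)\}=0$, whence a countable union over $\gamma$ gives the conclusion. Your coupling language is a slightly more elaborate packaging of the same one-line computation the paper carries out directly (by contradiction), but the content is identical.
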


\begin{proof}
For $\gamma \in \Gamma$, let $\rN_\gamma = \{\Lambda \in \mathrm{Sub}(\Gamma) : \gamma \in \Lambda\}$. Then $(\rN_\gamma)_{\gamma \in \Gamma}$ is a subbasis of the topology of $\mathrm{Sub}(\Gamma)$ consisting of clopen sets and any measure on $\mathrm{Sub}(\Gamma)$ is determined by the values it takes on this subbasis.

By the definition of action factor map, we have $\forall^*y\ \mathrm{Stab}^\beta(y) \subseteq \mathrm{Stab}^\alpha(\pi(y))$. Suppose now that $\exists^*y\ \mathrm{Stab}^\beta(y) \subsetneq \mathrm{Stab}^\alpha(\pi(y))$.

By countability of $\Gamma$, $\exists \gamma \in \Gamma\ \exists^*y$, $\gamma \in \mathrm{Stab}^\alpha(\pi(y)) \setminus \mathrm{Stab}^\beta(y)$, thus
\begin{eqnarray*}
\theta_\beta(\rN_\gamma) & = & \mathrm{Stab}^\beta_*\nu(\rN_\gamma) \\
                       & < & (\mathrm{Stab}^\alpha \circ \pi)_*\nu(\rN_\gamma)\\
                       & = & \mathrm{Stab}^\alpha_*(\pi_*\nu)(\rN_\gamma) \\
                       & = & \mathrm{Stab}^\alpha_*\mu(\rN_\gamma) \\
                       & = & \theta_\alpha(\rN_\gamma),
\end{eqnarray*}
a contradiction.
\end{proof}

\begin{corollary} \label{action factor with same irs implies graphing factor}
Let $\alpha,\beta$ be actions of a countable group $\Gamma$ on standard probability spaces $(X,\mu)$ and $(Y,\nu)$ such that $\alpha \sqsubseteq \beta$ and $\theta_\alpha = \theta_\beta$, and let $S \subseteq \Gamma$ be finite . Then we have $\pG_{\alpha,S} \underset{c}{\sqsubseteq} \pG_{\beta,S}$ as $\mP(S)$-colored graphings.
\end{corollary}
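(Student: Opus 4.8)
The plan is to take the action factor map $\pi \colon Y \to X$ witnessing $\alpha \sqsubseteq \beta$ and show that the very same map is a colored graphing factor map from $\pG_{\beta,S}$ to $\pG_{\alpha,S}$. By Definition of colored graphing factor map, I need to check that for almost every $y \in Y$, the restriction $\pi \restriction_{[y]_{\pG_{\beta,S}}}$ is an isomorphism of $\mP(S)$-colored graphs onto $[\pi(y)]_{\pG_{\alpha,S}}$. There are essentially three things to verify: that $\pi$ maps the $\pG_{\beta,S}$-component of $y$ onto the $\pG_{\alpha,S}$-component of $\pi(y)$, that it does so bijectively, and that it preserves both edges and their colors in both directions.

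First I would handle edge-and-color preservation in the forward direction. If $(y,y') \in \rE(\pG_{\beta,S})$ with color $c = \{s \in S : y' = s^\beta y\}$, then for each $s \in c$ we have $y' = s^\beta y$, hence $\pi(y') = \pi(s^\beta y) = s^\alpha \pi(y)$ by the defining property of the action factor map (which holds for almost every $y$, and since $S$ is finite, simultaneously for all $s \in S$ on a conull set). So $(\pi(y),\pi(y')) \in \rE(\pG_{\alpha,S})$ and the color of this edge contains $c$. For the reverse inclusion of colors, and for surjectivity onto the target component, I would invoke Lemma \ref{action factor with same irs implies same stabilizers}: since $\theta_\alpha = \theta_\beta$, we have $\mathrm{Stab}^\alpha(\pi(y)) = \mathrm{Stab}^\beta(y)$ for almost every $y$. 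This is the key input. It gives that if $s^\alpha \pi(y) = s'^\alpha \pi(y)$ then $s^{-1}s' \in \mathrm{Stab}^\alpha(\pi(y)) = \mathrm{Stab}^\beta(y)$, so $s^\beta y = s'^\beta y$; applying this with the elements of $S$ shows the color of the edge $(\pi(y),\pi(y'))$ is exactly $c$, and more importantly that the map $s^\beta y \mapsto s^\alpha \pi(y)$ is well-defined and injective on the $1$-neighborhood of $y$. Surjectivity of $\pi$ from $[y]_{\pG_{\beta,S}}$ onto $[\pi(y)]_{\pG_{\alpha,S}}$ then follows: every neighbor of $\pi(y)$ in $\pG_{\alpha,S}$ is of the form $s^\alpha \pi(y) = \pi(s^\beta y)$ for some $s \in S \cup S^{-1}$, and one propagates along paths — here I should be slightly careful and note that the equality $\pi(\gamma^\beta y) = \gamma^\alpha \pi(y)$ holds for all $\gamma \in \Gamma$ on a conull set, and that the $\pG_{\beta,S}$-component of $y$ is contained in the $\langle S \rangle^\beta$-orbit of $y$, so a standard conull-set argument (the component is a countable set, each reached by a finite word in $S$) lets me assume all the needed identities hold along the whole component of $y$.

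With well-definedness and injectivity on neighborhoods plus the stabilizer equality in hand, bijectivity on the whole component is a connectedness argument: $\pi$ sends $\pG_{\beta,S}$-paths to $\pG_{\alpha,S}$-paths and reflects them, and two points of $[y]_{\pG_{\beta,S}}$ with the same image would, writing them as $w^\beta y$ for words $w$ in $S \cup S^{-1}$, force the difference of the corresponding words to lie in $\mathrm{Stab}^\alpha(\pi(y)) = \mathrm{Stab}^\beta(y)$, hence the two points coincide. I expect the main obstacle to be purely bookkeeping: organizing the "for almost every $y$" quantifiers so that, on a single conull set, the action-factor identity holds for all group elements needed, the stabilizer equality of Lemma \ref{action factor with same irs implies same stabilizers} holds, and these propagate consistently along entire countable components — but no new idea beyond Lemma \ref{action factor with same irs implies same stabilizers} is required.
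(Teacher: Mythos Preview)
Your proposal is correct and follows essentially the same approach as the paper: both take the action factor map $\pi$ itself and use Lemma~\ref{action factor with same irs implies same stabilizers} as the key input to upgrade equivariance to a bijection on orbits. The paper compresses your argument by observing in one line that the stabilizer equality makes $\pi \restriction_{\Gamma \cdot y}$ a $\Gamma$-equivariant bijection $\Gamma \cdot y \to \Gamma \cdot \pi(y)$, from which the Schreier graph isomorphism (for any finite $S$) is immediate; your version unpacks this at the level of $S$-neighborhoods and colors, which amounts to the same thing.
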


\begin{proof}
Applying Lemma \ref{action factor with same irs implies same stabilizers} to an action factor map $\pi \colon Y \rightarrow X$ gives us that for almost every $y \in Y$, $\pi \restriction_{ \Gamma \cdot y}$ is a $\Gamma$-equivariant bijection $\Gamma \cdot y \rightarrow \Gamma \cdot \pi(y)$ and so it is an isomorphism of Schreier graphs. It follows that $\pi$ is a graphing factor map.
\end{proof}

\subsection{The proof of Theorem A}

\subsubsection{The preliminary case of factors}

We begin with the case where one of the actions is a factor of the other. In fact we prove a stronger version involving the stability of Borel sets.

\begin{definition}
Let $F_1,F_2$ be two finite sets. An \textbf{$(F_1,F_2)$-bicolored graphing} on a standard probability space $(X,\mu)$ is a graphing $\pG(X,\mu)$ endowed with two Borel maps $\varphi_\pG \colon \rE(\pG) \rightarrow F_1$ and $\psi_\pG \colon X \rightarrow F_2$. We call $\psi_\pG(x)$ the vertex-color of $x$ and $\varphi_\pG(x,y)$ the edge-color of $(x,y)$.
\end{definition}

\begin{definition}
Let $\pG(X,\mu)$ and $\pG'(Y,\nu)$ be two $(F_1,F_2)$-bicolored graphings. A \textbf{bicolored graphing factor map} $\pi \colon Y \rightarrow X$ is an $F_1$-colored graphing factor map such that $\psi_\pG \circ \pi = \psi_{\pG'}$.

We say that $\pG$ is a bicolored factor of $\pG'$ and we write $\pG \underset{bic}{\sqsubseteq} \pG'$ if there is a bicolored factor map $\pi \colon Y \rightarrow X$.
\end{definition}

\begin{theorem}[Approximate parametrized conjugacy for factor actions] \label{approximate parametrized conjugacy for factor actions}
Let $(X,\mu)$ and $(Y,\nu)$ be standard probability spaces and $A_1,\dots,A_k \subseteq X$, $B_1,\dots,B_k \subseteq Y$ be Borel subsets. Let $\Gamma$ be a countable group, $\theta$ be a hyperfinite IRS on $\Gamma$ and $\Gamma \act{\alpha} (X,\mu)$,$\Gamma \act{\beta} (Y,\nu)$ be pmp actions of $\Gamma$ with IRS $\theta$ and such that $\alpha \sqsubseteq \beta$ for an action factor map $\pi \colon Y \rightarrow X$ such that $\forall i \leq k,\ \pi^{-1}(A_i) = B_i$. Then for $\varepsilon >0$ and $\gamma_1,\dots,\gamma_n \in \Gamma$, there exists a pmp bijection $\rho \colon X \rightarrow Y$ such that $\forall i \leq k,\ \rho(A_i) = B_i$ and
$$\mu(\{x \in X : \forall i \leq n,\ \rho \circ \gamma_i^\alpha(x) = \gamma_i^\beta \circ \rho(x)\}) > 1 - \varepsilon.$$
\end{theorem}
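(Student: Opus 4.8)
The plan is to transfer the problem from group actions to colored graphings, where hyperfiniteness can be exploited directly. By Corollary~\ref{action factor with same irs implies graphing factor}, applied with $S = \{\gamma_1,\dots,\gamma_n\}$, the factor map $\pi$ is a $\mP(S)$-colored graphing factor map from $\pG_{\beta,S}$ onto $\pG_{\alpha,S}$. Upgrade this to bicolored graphings: put the vertex-coloring $\psi(x) = (\1_{A_1}(x),\dots,\1_{A_k}(x)) \in \{0,1\}^k$ on $\pG_{\alpha,S}$ and the pulled-back coloring on $\pG_{\beta,S}$; since $\pi^{-1}(A_i) = B_i$, this makes $\pi$ a bicolored graphing factor map, so $\pG_{\alpha,S} \underset{bic}{\sqsubseteq} \pG_{\beta,S}$. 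Note also that a pmp bijection $\rho \colon X \to Y$ with $\rho(A_i) = B_i$ for all $i$ and $\rho \circ \gamma_i^\alpha = \gamma_i^\beta \circ \rho$ off a set of measure $< \varepsilon$ is exactly (the vertex map of) an "approximate bicolored isomorphism" of the two bicolored graphings; so it suffices to prove the following graphing statement: \emph{if two bicolored graphings of bounded degree are related by a bicolored factor map, and the larger one is hyperfinite, then for every $\varepsilon > 0$ there is a pmp bicolored bijection between their vertex sets that agrees with the graphing structure on a set of measure $> 1 - \varepsilon$.} (Hyperfiniteness of $\pG_{\beta,S}$ comes from Lemma~\ref{hyperfiniteness passes to graphings} and the hypothesis that $\theta$ is a hyperfinite IRS.)

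To prove this graphing statement, fix $\varepsilon > 0$. By hyperfiniteness of $\pG_{\beta,S}$, choose $M \in \N$ and a Borel edge set $Z \subseteq \rE(\pG_{\beta,S})$ with $\mu_\rE(Z)$ small (so that $\mu(\rV_{\inc}(Z))$ is small, using the degree bound) such that $\pG_{\beta,S} \setminus Z$ has all components of size $\le M$. Push $Z$ down: since $\pi$ is a local isomorphism on each component, it induces a Borel edge set $Z' \subseteq \rE(\pG_{\alpha,S})$ — but one must be slightly careful, because distinct $Y$-components can map onto the same $X$-component, so the image decomposition downstairs is not canonical; the clean way is to work upstairs. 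On $Y \setminus \rV_{\inc}(Z)$ the components of $\pG_{\beta,S} \setminus Z$ are finite bicolored graphs of size $\le M$; on $X$, the components of $\pG_{\alpha,S}$ restricted to $\pi(Y \setminus \rV_{\inc}(Z))$ are also finite (each is isomorphic, via $\pi$ on a suitable component, to one of the finite pieces upstairs), so we may likewise delete a small-measure edge set downstairs to cut $X$ into finite bicolored pieces. Now over each finite isomorphism type $\tau$ of bicolored graph of size $\le M$, let $X_\tau \subseteq X$ and $Y_\tau \subseteq Y$ be the (Borel) unions of components of type $\tau$; because $\pi$ is measure-preserving and maps $\tau$-components of $\pG_{\beta,S}\setminus Z$ onto $\tau$-components downstairs, a counting/measure argument gives $\mu(X_\tau) \le \nu(Y_\tau)$ for each $\tau$... and summing, the total measure $\sum_\tau \nu(Y_\tau)$ is close to $1$, while $\sum_\tau \mu(X_\tau)$ is also close to $1$; so after discarding another small set we can arrange $\mu(X_\tau) = \nu(Y_\tau)$ for each $\tau$ appearing. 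Over each $\tau$, identify $X_\tau \cong \tau \times C_\tau$ and $Y_\tau \cong \tau \times D_\tau$ with $C_\tau, D_\tau$ standard spaces of equal mass (a Rokhlin-type parametrization of the finite components, respecting the bicoloring, which is honest since all components have the exactly prescribed finite bicolored type), pick any pmp bijection $C_\tau \to D_\tau$, and let $\rho$ be the induced map $\tau \times C_\tau \to \tau \times D_\tau$ acting as the identity on the $\tau$ coordinate. Glue the $\rho$'s over all $\tau$ and extend to a pmp bijection on the leftover small sets arbitrarily (rescaling to equal measure as needed). By construction $\rho$ respects the bicoloring everywhere it is defined by the gluing, and it intertwines the Schreier graph structure — hence $\rho \circ \gamma_i^\alpha = \gamma_i^\beta \circ \rho$ — on the complement of the union of all the discarded sets, whose total measure we have kept below $\varepsilon$. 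Translating back, $\rho(A_i) = B_i$ (the $A_i$ are recovered from the vertex-coloring $\psi$, which $\rho$ preserves on the good set; on the bad set we adjust $\rho$ by hand to still send $A_i \cap (\text{bad}) $ into $B_i$, which is possible since these have equal measure).

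The main obstacle I anticipate is the bookkeeping in the middle step: matching up the finite bicolored component-types downstairs and upstairs with the correct measures. One has to be careful that (i) deleting a small edge set in $Y$ forces a compatible small deletion in $X$ (or else the $X$-pieces won't be finite), (ii) the factor map being non-injective on components doesn't spoil the measure equalities $\mu(X_\tau) \le \nu(Y_\tau)$, and (iii) the residual sets where the types don't match exactly, or where $\rho$ is defined ad hoc, can all be absorbed into the $\varepsilon$-error while still honoring the constraint $\rho(A_i) = B_i$. The cleanest route is probably to phrase everything in terms of the equivalence relations $\mR_{\pG_{\alpha,S}}$, $\mR_{\pG_{\beta,S}}$ and their restrictions to the finite parts, invoke the fact that a pmp bijection respecting a finite bicolored graph structure on each class exists as soon as the two "type-stratified" measures agree, and handle the measure discrepancy by a single final truncation. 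The freeness/IRS hypotheses enter only through Corollary~\ref{action factor with same irs implies graphing factor} (to get the factor at the graphing level) and through hyperfiniteness of $\theta$ (to get the finite-component approximation); everything after that is a Rokhlin-lemma-style combinatorial argument on finite bicolored graphs.
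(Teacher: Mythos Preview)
Your overall strategy is the paper's: reduce to bicolored Schreier graphings (edge-colors from $\mP(S)$, vertex-colors encoding membership in the $A_i$/$B_i$), use hyperfiniteness to cut into finite components, then match components type-by-type. But you apply hyperfiniteness on the wrong side, and this is the source of every bookkeeping worry you list at the end.

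You delete a small edge set $Z$ upstairs in $\pG_{\beta,S}$ and then try to push the decomposition down through $\pi$. As you yourself notice, $\pi$ is only a local isomorphism on full $\pG'$-components, not on the truncated $\pH'$-components, so there is no canonical image decomposition; you are forced into an unclear auxiliary deletion downstairs, an unproved inequality $\mu(X_\tau)\le\nu(Y_\tau)$, a further truncation to force equality, and finally an ad hoc patch on the bad set to rescue $\rho(A_i)=B_i$. None of these steps is obviously wrong, but none is justified either, and together they are exactly the ``main obstacle'' you anticipate.

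The paper dissolves all of this with one change: apply hyperfiniteness to the \emph{factor} graphing $\pG_{\alpha,S}$ on $X$, obtaining a small edge set $Z\subseteq\rE(\pG_{\alpha,S})$ with $\pH=\pG_{\alpha,S}\setminus Z$ having components of size $\le M$, and then \emph{pull back} to $Z'=\pi^{-1}(Z)$ upstairs. Because $\pi$ restricted to each full $\pG_{\beta,S}$-component is a bicolored graph isomorphism onto a $\pG_{\alpha,S}$-component, the pullback $\pH'=\pG_{\beta,S}\setminus Z'$ also has components of size $\le M$, and moreover for every finite bicolored type $S$ one has the exact identity $C_S^{\pH'}=\pi^{-1}(C_S^\pH)$, hence $\nu(C_S^{\pH'})=\mu(C_S^\pH)$ on the nose. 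The map $\rho$ is then built type-by-type (via Borel transversals) so that it is a bicolored isomorphism on every $\pH$-component; in particular the vertex-coloring is preserved \emph{everywhere}, giving $\rho(A_i)=B_i$ exactly with no residual set to patch. The only error is in the edge structure, coming from the edges in $Z$, and this is controlled by $\mu_\rE(Z)$. Your points (i)--(iii) simply do not arise.

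One minor remark: take $S$ closed under inverses, i.e.\ $S=\{\gamma_1,\dots,\gamma_n,\gamma_1^{-1},\dots,\gamma_n^{-1}\}$, so that the Schreier graph is honestly an undirected graphing.
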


\begin{proof}
We begin the proof with a claim about graphings.
\begin{pushright}
\begin{claim} \label{approximate colored conjugacy for factor graphings}
Let $\pG(X,\mu)$ and $\pG'(Y,\nu)$ be hyperfinite $(F_1,F_2)$-bicolored graphings of degree bound at most $d$ such that $\pG(X,\mu) \underset{bic}{\sqsubseteq} \pG'(Y,\nu)$. Then for any $\varepsilon >0$ there exists a pmp bijection $\rho \colon X \rightarrow Y$ such that $\psi_\pG = \psi_{\pG'} \circ \rho$ and
$$\mu_\rE \left( \underset{c \in F_1}{\bigcup} \rho^{-1} \left( \rE^c(\pG') \right) \bigtriangleup \rE^c(\pG) \right) < \varepsilon.$$
\end{claim}

\begin{claimproof}
Let $\pi$ be a bicolored graphing factor map $Y \rightarrow X$.
First take a Borel set $Z \subseteq \rE(\pG)$ of measure less than $\frac{\varepsilon}{2d}$ and $M \in \N$ such that the graphing $\pH = \pG \setminus Z$ has components of size at most $M$.
Let $Z'=\pi^{-1}(Z)$ and $\pH' = \pG' \setminus Z'$. Since $\pi$ is a graphing factor map, we know that $\pH'$ has components of size at most $M$. Then $\pH$ and $\pH'$ have a $(F_1,F_2)$-bicolored graphing structure respectively for the maps $\varphi_\pG \restriction _{\rE(\pH)}$, $\psi_\pG$ and $\varphi_{\pG'} \restriction _{\rE(\pH')}$, $\psi_{\pG'}$.

Consider the set $\mG_M$ of connected $(F_1,F_2)$-colored graphs of size at most $M$. We consider the two partitions $X = \underset{S \in \mG_M}{\bigsqcup} C_S^\pH$ and $Y = \underset{S \in \mG_M}{\bigsqcup} C_S^{\pH'}$, where $C_S^\pH$ is defined to be the set of vertices of $\pH$ whose component is $(F_1,F_2)$-colored isomorphic to $S$. Since $\pi$ induces $(F_1,F_2)$- colored graph isomorphisms, we have $C_S^{\pH'}=\pi^{-1}(C_S^\pH)$.

In order to define $\rho$, it suffices to define a measure-preserving bijection $\rho_S \colon C_S^\pH \rightarrow C_S^{\pH'}$ preserving bicolored graph structures for each $S \in \mG_M$.

Indeed, the union of all these bijections would yield a measure-preserving bijection  $\rho \colon X \rightarrow Y$ preserving vertex-colors such that  $\forall x \in X \setminus \rV_{\inc}(Z),\ \B^\pG(x,1) = \B^\pH(x,1) \simeq \B^{\pH'}(\rho(x),1) = \B^{\pG'}(\rho(x),1)$, where $\B^G(v,n)$ denotes the ball of size $n$ centered at $v$ in the graph $G$. Hence we would have $\rV_{\inc} \left( \underset{c \in F_1}{\bigcup} \rho^{-1} \left( \rE^c(\pG') \right) \bigtriangleup \rE^c(\pG) \right) \subseteq \rV_{\inc}(Z)$, and so
$$\mu_\rE \left( \underset{c \in F_1}{\bigcup} \rho^{-1} \left( \rE^c(\pG') \right) \bigtriangleup \rE^c(\pG) \right) \leq d \mu(\rV_{\inc}(Z)) \leq 2d \mu_\rE(Z) < \varepsilon.$$

Take $S \in \mG_M$ and let us define $\rho_S$. First we define a partition of $C_S^\pH$ into Borel transversals $(T_v)_{v \in \rV(S)}$ (for $\pH$) by induction, such that the elements of $T_v$ occupy the same place in their component for $\pH$ as $v$ in $S$.

Suppose that the $T_{v'}$ are already defined for $v' \in R$ where $R$ is a proper subset of $\rV(S)$. Take $v \in \rV(S) \setminus R$ incident to $R$ and let $\widetilde{T_v} = \{x \in C_S^\pH : ([x]_\pH,x) \simeq_R (S,v)\}$. Here $\simeq_R$ means isomorphic over $R$, that is there exists an isomorphism $f \colon ([x]_\pH,x) \rightarrow (S,v)$ of colored rooted graphs such that $\forall v' \in R,\ f([x]_\pH \cap T_{v'}) = \{v'\}$. Now since $\pH$ has finite components, chose for $T_v$ any Borel transversal of $\widetilde{T_v}$. Then we let $R' = R \cup \{v\}$ and we iterate the construction.

Again since $\pi$ is a bicolored graphing factor map, the family $(\pi^{-1}(T_v))_{v \in \rV(S)}$ is a partition of $C_S^{\pH'}$ into Borel transversals (for $\pH'$) such that the elements of $\pi^{-1}(T_v)$ occupy the same place in their component for $\pH'$ as $v$ in $S$. We may now define $\rho_S$:
\begin{itemize}
    \item We start by chosing $v_0 \in S$ and taking a measure-preserving bijection $\rho_S^{v_0} \colon T_{v_0} \rightarrow \pi^{-1}(T_{v_0})$.
    \item Then for every $v \in S$, there is a unique way of extending $\rho_S^{v_0}$ to $T_v$ while respecting the graph structure of $S$. Indeed, take $x \in T_v$, there is a unique $x_0 \in [x]_\pH \cap T_{v_0}$ and we want to define $\rho_S^v(x) \in [\rho_S^{v_0}(x_0)]_{\pH'} \cap \pi^{-1}(T_v)$ but again this intersection is a singleton. Define $\rho_S \colon C_S^\pH \rightarrow C_S^{\pH'}$ to be this unique extension of $\rho_S^{v_0}$ satisfying the condition above.
    
    As $\pi$ is a colored graphing factor map, it is clear that $\rho_S$ is a measure-preserving bijection and that for every $x \in C_S^\pH$, $\rho_S$ induces an isomorphism of colored graphs between $[x]_\pH$ and $[\rho_S(x)]_{\pH'}$.
\end{itemize}
\end{claimproof}
\end{pushright}
We now want to apply the Claim to suitable graphings to conclude. Let  $S = \{\gamma_1,\dots,\gamma_n,\gamma_1^{-1},\dots,\gamma_n^{-1}\}$ and consider the graphings $\pG_{\alpha,S}$ and $\pG_{\beta,S}$.

For the spaces of colors, we choose $F_1 = \mP(S)$ and $F_2 = \mP(\{1,\dots,k\})$. The way we color edges has already been explained; for vertices, simply color a vertex $x \in X$ by  $\psi_{\pG_{\alpha,S}}(x) = \{i \leq k : x \in A_i\}$ and $y \in Y$ by $\psi_{\pG_{\beta,S}}(y) = \{i \leq k : y \in B_i\}$.

First, $\pG_{\alpha,S}$ and $\pG_{\beta,S}$ are indeed $(\mP(S),\mP(\{1,\dots,k\}))$-bicolored graphings, and are hyperfinite since $\alpha$ and $\beta$ are hyperfinite actions.

The next step is to prove that $\pi$ considered in the statement of the theorem is a bicolored factor map for the $(\mP(S),\mP(\{1,\dots,k\}))$-bicolored graphings $\pG_{\alpha,S}$ and $\pG_{\beta,S}$.

\begin{itemize}
    \item First, $\pi$ is indeed a pmp map $Y \rightarrow X$.
    \item Then for $y \in Y$, we have $$\psi_{\pG_{\alpha,S}}(\pi(y)) = \{i \leq k : \pi(y) \in A_i\} = \{i \leq k : y \in B_i\} = \psi_{\pG_{\beta,S}}(y).$$
    \item Finally, by Corollary \ref{action factor with same irs implies graphing factor}, $\pi$ is furthermore a colored graphing factor map between the  $\mP(S)$-colored graphings $\pG_{\alpha,S}$ and $\pG_ {\beta,S}$.
\end{itemize}

Applying the Claim gives us a pmp bijection $\rho \colon X \rightarrow Y$ such that $\psi_{\pG_{\alpha,S}} = \psi_{\pG_{\beta,S}} \circ \rho$ and
$$\mu_\rE \left( \underset{c \in \mP(S)}{\bigcup} \rE^c(\pG_{\alpha,S}) \bigtriangleup \rho^{-1} \left( \rE^c(\pG_{\beta,S}) \right) \right) < \frac{\varepsilon}{2}.$$
But then for $1 \leq i \leq k$, $\rho(A_i) = B_i$, and by definitions of $\pG_{\alpha,S}$ and $\pG_{\beta,S}$ we get
$$\{x \in X : \exists \gamma \in S,\ \rho \circ \gamma^\alpha(x) \neq \gamma^\beta \circ \rho(x)\} \subseteq \rV_{\inc} \left( \underset{c \in \mP(S)}{\bigcup} \rE^c(\pG_{\alpha,S}) \bigtriangleup \rho^{-1} \left( \rE^c(\pG_{\beta,S}) \right) \right),$$
so its measure is less than $2 \cdot \frac{\varepsilon}{2} = \varepsilon$.
\end{proof}

\subsubsection{Amalgamation of measure-preserving actions} \label{amalgamation of actions}

To conclude the proof of Theorem \ref{approximate conjugacy for pmp actions with a given hyperfinite irs}, we will use the transitivity of the approximate conjugacy relation and show that for any two pmp actions $\Gamma \act{\alpha} (X,\mu)$ and $\Gamma \act{\beta} (Y,\nu)$ of $\Gamma$ such that $\theta_\alpha = \theta_\beta$, there is a third pmp action $\Gamma \act{\zeta} (Z,\eta)$ of IRS $\theta$ such that both $\alpha$ and $\beta$ are factors of $\zeta$.
\\

We recall the definition of the relative independent joining following the presentation in \cite{ErgodicTheoryViaJoinings}.

\begin{proposition}[Disintegration theorem,{\cite[A.7]{ErgodicTheoryViaJoinings}}] \label{disintegration}
Let $X,Y$ be standard probability spaces, $\mu \in \fP(Y)$ and $\pi \colon Y \rightarrow X$ be a measurable map. We let $\nu = \pi_* \mu$. Then there is a $\nu$-a.e. uniquely determined family of probability measures $(\mu_x)_{x \in X} \in \fP(Y)^X$ such that:
\begin{enumerate}
    \item For each Borel $B \subseteq Y$, the map $x \mapsto \mu_x(B)$ is measurable.
    \item For $\nu$-a.e. $x \in X$, $\mu_x$ is concentrated on the fiber $\pi^{-1}(x)$.
    \item For every Borel map $f \colon Y \rightarrow [0,\infty]$, $\int_Y f(y)\ d\mu(y) = \int_X \int_Y f(y)\ d\mu_x(y)\ d\nu(x)$.
\end{enumerate}
We then write $\mu = \int_X \mu_x\ d\nu$.
\end{proposition}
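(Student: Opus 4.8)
This is the classical disintegration theorem, so the plan is to reproduce the standard Radon--Nikodym-plus-Riesz argument. First I would reduce to the case where $X$ and $Y$ are compact metrizable: since both are standard, fix Borel isomorphisms with such spaces and transport $\mu$, $\nu$ and $\pi$ along them, so that $\pi$ becomes a Borel map between compact metric spaces. For each $f \in C(Y)$ the set function $A \mapsto \int_{\pi^{-1}(A)} f\, d\mu$ is a finite signed Borel measure on $X$ that is absolutely continuous with respect to $\nu = \pi_*\mu$ (since $\nu(A)=0$ forces $\mu(\pi^{-1}(A))=0$), so I take a Radon--Nikodym derivative $E(f) \in L^1(X,\nu)$, characterised by $\int_A E(f)\, d\nu = \int_{\pi^{-1}(A)} f\, d\mu$ for all Borel $A \subseteq X$. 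The assignment $f \mapsto E(f)$ is linear, positive, $E(\mathbf 1) = \mathbf 1$ and $\|E(f)\|_\infty \le \|f\|_\infty$, and for every bounded Borel $k$ on $X$ one has $E(k\circ\pi) = k$ directly from $\pi_*\mu = \nu$.

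\textbf{Building the $\mu_x$.} Next I would fix a countable uniformly dense $\mathbb Q$-subalgebra $\mathcal D \subseteq C(Y)$ containing the constants, choose one version of $E(f)$ for each $f \in \mathcal D$, and discard a single $\nu$-null set $N$ outside of which the countably many algebraic identities (additivity, $\mathbb Q$-homogeneity, positivity, $E(\mathbf 1)=\mathbf 1$) relating these versions all hold simultaneously. Then for $x \notin N$ the functional $f \mapsto E(f)(x)$ on $\mathcal D$ is additive, $\mathbb Q$-homogeneous, positive and unital, hence extends uniquely to a positive unital functional on $C(Y)$, which by the Riesz representation theorem is integration against a unique $\mu_x \in \fP(Y)$; for $x \in N$ I set $\mu_x := \mu$. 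By construction $\int_Y f\, d\mu_x = E(f)(x)$ for $f \in \mathcal D$ and $x \notin N$, hence, approximating uniformly, for all $f \in C(Y)$ and $\nu$-a.e.\ $x$, which yields the localised identity $\int_A \bigl(\int_Y f\, d\mu_x\bigr)\, d\nu(x) = \int_{\pi^{-1}(A)} f\, d\mu$ for all Borel $A$. A monotone class argument then shows that the family of bounded Borel $f \colon Y \to \R$ for which $x \mapsto \int_Y f\, d\mu_x$ is measurable and the localised identity holds is all of them; specialising to $f = \mathbf 1_B$ gives $(1)$, and monotone convergence (with $A = X$) upgrades the identity to $(3)$ for arbitrary nonnegative Borel $f$.

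\textbf{Concentration on fibers and uniqueness.} For $(2)$ I would fix a countable family $(h_n)_n \subseteq C(X)$ separating the points of $X$. Applying the localised identity to the bounded Borel functions $h_n\circ\pi$ and $(h_n\circ\pi)^2 = (h_n^2)\circ\pi$ together with $\pi_*\mu = \nu$ gives, for $\nu$-a.e.\ $x$,
\[
\int_Y \bigl(h_n(\pi(y)) - h_n(x)\bigr)^2\, d\mu_x(y) = h_n(x)^2 - 2h_n(x)^2 + h_n(x)^2 = 0,
\]
so $\mu_x\bigl(\{y : h_n(\pi(y)) = h_n(x)\}\bigr) = 1$; intersecting over the countably many $n$ and using that $(h_n)$ separates points yields $\mu_x(\pi^{-1}(x)) = 1$ for $\nu$-a.e.\ $x$. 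Uniqueness is the easy part: if $(\mu_x')$ also satisfies $(1)$ and $(3)$, then for each $f \in \mathcal D$ and every Borel $A$ both $\int_A \int_Y f\, d\mu_x\, d\nu$ and $\int_A \int_Y f\, d\mu_x'\, d\nu$ equal $\int_{\pi^{-1}(A)} f\, d\mu$, so $\int_Y f\, d\mu_x = \int_Y f\, d\mu_x'$ for $\nu$-a.e.\ $x$, and intersecting over the countable set $\mathcal D$ gives $\mu_x = \mu_x'$ for $\nu$-a.e.\ $x$. The only steps needing care are the simultaneous countable choice of Radon--Nikodym versions and --- the one genuinely exploiting that $X$ is standard --- the countable separating family used to pin the mass of $\mu_x$ onto the fiber over $x$; everything else is routine measure theory, and in any case the statement is classical and could simply be cited.
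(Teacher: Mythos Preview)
The paper does not prove this proposition: it is stated with a reference to \cite[A.7]{ErgodicTheoryViaJoinings} and then used as a black box in the construction of the relative independent joining. Your proof is the standard Radon--Nikodym/Riesz argument and is correct as written; your own closing remark that ``the statement is classical and could simply be cited'' is precisely what the paper does, so there is no proof in the paper to compare yours against.
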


\begin{definition}[{\cite[Section 6.1]{ErgodicTheoryViaJoinings}}] \label{relative independent joining}
Let $\Gamma \act{\alpha} (X,\mu)$ and $\Gamma \act{\beta} (X',\mu')$ be pmp actions on standard probability spaces, and let $\Gamma \act{\xi} (Y,\nu)$ be an action on a standard probability space common factor of $\alpha$ and $\beta$ for respective action factor maps $\pi \colon X \rightarrow Y$ and $\pi' \colon X' \rightarrow Y$.

We can disintegrate $\mu$ and $\mu'$ with respect to $\nu$ using the Borel maps $\pi$ and $\pi'$ to get $\mu = \int_Y \mu_y\ d\nu$ and $\mu' = \int_Y \mu'_y\ d\nu$.

Consider $Z := X \times Y$ and $\eta \in \fP(Z)$ defined by $\eta = \int_Y \mu_y \times \mu'_y\ d\nu$.

The pmp action $\Gamma \act{\alpha \times \beta} (Z,\eta)$ is called the \textbf{independent joining of $\alpha$ and $\beta$ over $\xi$} and is denoted by $\alpha \underset{\xi}{\times} \beta$.
\end{definition}

The action $\alpha \underset{\xi}{\times} \beta$ is indeed a \textbf{joining} of $\alpha$ and $\beta$ over $\xi$, meaning that both $\alpha$ and $\beta$ are factors of their independent joining over $\xi$, respectively for the projections on the first and second coordinates $p_1$ and $p_2$, and moreover the following diagram commutes, up to a null set:

\begin{center}
\begin{tikzcd}
& \ar[ld, "p_1"'] \alpha \underset{\xi}{\times} \beta \ar[rd, "p_2"] & \\
\alpha \ar[rd, "\pi_1"'] & & \ar[ld, "\pi_2"] \beta \\
& \xi &
\end{tikzcd}
\end{center}

Let $\theta$ be an IRS on $\Gamma$, we write $\boldsymbol{\theta}$ for the measure-preserving conjugation action $\Gamma \act{\boldsymbol{\theta}} (\mathrm{Sub}(\Gamma),\theta)$. For every pmp action $\Gamma \act{\alpha} (X,\mu)$, the map $\mathrm{Stab}^\alpha \colon (X,\mu) \rightarrow (\mathrm{Sub}(\Gamma),\theta)$ is an action factor map.

\begin{lemma} \label{joining over an irs}
Let $\Gamma$ be a countable group and $\theta$ be an IRS on $\Gamma$. Let $\Gamma \act{\alpha} (X,\mu)$, $\Gamma \act{\beta} (Y,\nu)$ be pmp actions of IRS $\theta$. Then $\alpha \underset{\boldsymbol{\theta}}{\times} \beta$ has IRS $\theta$.
\end{lemma}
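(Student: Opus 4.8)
The plan is to compute the stabilizer of a generic point of the joining $Z = X \times Y$ directly in terms of the stabilizers in $\alpha$ and $\beta$, and to show that the resulting pushforward measure coincides with $\theta$. Write $\zeta = \alpha \underset{\boldsymbol\theta}{\times}\beta$, acting on $(Z,\eta)$ with $\eta = \int_{\mathrm{Sub}(\Gamma)} \mu_\Lambda \times \nu_\Lambda\, d\theta(\Lambda)$, where the disintegrations are taken with respect to the factor maps $\mathrm{Stab}^\alpha \colon X \to \mathrm{Sub}(\Gamma)$ and $\mathrm{Stab}^\beta \colon Y \to \mathrm{Sub}(\Gamma)$. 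Since $\zeta$ acts diagonally, for any $(x,y) \in Z$ we have $\mathrm{Stab}^\zeta(x,y) = \mathrm{Stab}^\alpha(x) \cap \mathrm{Stab}^\beta(y)$, so $\mathrm{Stab}^\zeta = \mathrm{Stab}^\alpha \circ p_1 \cap \mathrm{Stab}^\beta \circ p_2$ as maps $Z \to \mathrm{Sub}(\Gamma)$.

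The key step is the observation that, by property (2) of the disintegration (Proposition \ref{disintegration}), for $\theta$-a.e.\ $\Lambda$ the measure $\mu_\Lambda$ is concentrated on $(\mathrm{Stab}^\alpha)^{-1}(\Lambda)$ and $\nu_\Lambda$ on $(\mathrm{Stab}^\beta)^{-1}(\Lambda)$; hence $\mu_\Lambda \times \nu_\Lambda$ is concentrated on the set of pairs $(x,y)$ with $\mathrm{Stab}^\alpha(x) = \mathrm{Stab}^\beta(y) = \Lambda$. On that set $\mathrm{Stab}^\zeta(x,y) = \Lambda \cap \Lambda = \Lambda$. Therefore, for $\eta$-a.e.\ $(x,y)$ we have $\mathrm{Stab}^\zeta(x,y) = \mathrm{Stab}^\alpha(p_1(x,y))$, i.e.\ the map $\mathrm{Stab}^\zeta$ agrees $\eta$-a.e.\ with $\mathrm{Stab}^\alpha \circ p_1$. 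Since $p_1$ is an action factor map (as noted after Definition \ref{relative independent joining}), it is measure-preserving, so $(\mathrm{Stab}^\zeta)_* \eta = (\mathrm{Stab}^\alpha \circ p_1)_* \eta = (\mathrm{Stab}^\alpha)_*(p_1{}_*\eta) = (\mathrm{Stab}^\alpha)_* \mu = \theta_\alpha = \theta$. This gives $\theta_\zeta = \theta$, as desired.

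The one point requiring a little care — and the only real obstacle — is justifying that $\mu_\Lambda \times \nu_\Lambda$ really is supported on the diagonal-stabilizer set $\{(x,y) : \mathrm{Stab}^\alpha(x) = \mathrm{Stab}^\beta(y)\}$ for $\theta$-a.e.\ $\Lambda$; this follows because a product of two probability measures, one concentrated on a Borel set $A$ and the other on a Borel set $B$, is concentrated on $A \times B$, and here $A = (\mathrm{Stab}^\alpha)^{-1}(\Lambda)$, $B = (\mathrm{Stab}^\beta)^{-1}(\Lambda)$ intersect $\mathrm{Stab}^\alpha = \mathrm{Stab}^\beta$. One should also note measurability of $\Lambda \mapsto \mu_\Lambda(B)$ from property (1) to make the integral defining $\eta$ legitimate, but this is part of the setup of Definition \ref{relative independent joining} and needs no further argument here.
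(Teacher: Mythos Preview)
Your proof is correct and follows essentially the same route as the paper's. Both arguments establish that $\mathrm{Stab}^\zeta = \mathrm{Stab}^\alpha \circ p_1$ holds $\eta$-almost everywhere and then push forward; the paper obtains this identity by citing the commutativity (up to a null set) of the joining diagram stated after Definition~\ref{relative independent joining}, whereas you unpack that commutativity directly via property~(2) of the disintegration, which is exactly what underlies it.
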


\begin{proof}
Let $\zeta$ denote $\alpha \underset{\boldsymbol{\theta}}{\times} \beta$. We know that the following diagram commutes.
\begin{center}
\begin{tikzcd}
& \ar[ld, "p_1"'] \zeta \ar[rd, "p_2"] & \\
\alpha \ar[rd, "\mathrm{Stab}^\alpha"'] & & \ar[ld, "\mathrm{Stab}^\beta"] \beta \\
&\boldsymbol{\theta}&
\end{tikzcd}
\end{center}

Therefore, for $\gamma \in \Gamma$, we have  $\forall^*(x,y),\ \gamma x = x \Leftrightarrow \gamma y = y \Leftrightarrow \gamma (x,y) = (x,y)$. It follows that  $\forall^*(x,y),\ \mathrm{Stab}^\zeta(x,y) = \mathrm{Stab}^\alpha(x)$ or in other words, $\mathrm{Stab}^\zeta = \mathrm{Stab}^\alpha \circ p_1$. We conclude that
$$\theta_\zeta = \mathrm{Stab}^\zeta_*\eta = \mathrm{Stab}^\alpha_* \left( {p_1}_*\eta \right) = \mathrm{Stab}^\alpha_*\mu = \theta_\alpha = \theta.$$
\end{proof}

Theorem \ref{approximate conjugacy for pmp actions with a given hyperfinite irs} states that if $\alpha$ and $\beta$ are two pmp hyperfinite actions of a group $\Gamma$ on a standard probability space such that $\theta_\alpha = \theta_\beta$, then $\alpha$ and $\beta$ are approximately conjugate. We can now prove this theorem:

\begin{proof}
Let $\Gamma \act{\alpha} (X,\mu)$ and $\Gamma \act{\beta} (Y,\nu)$  be two hyperfinite actions of $\Gamma$ having IRS $\theta$ and consider the joining $\Gamma \act{\zeta} (Z,\eta)$ from Lemma \ref{joining over an irs}.

Applying twice Theorem \ref{approximate parametrized conjugacy for factor actions} with no Borel parameters we get two pmp bijections $\rho \colon X \rightarrow Z$ and $\rho' \colon Y \rightarrow Z$ such that:
$$\mu(\{x \in X : \forall i \leq n,\ \rho \circ \gamma_i^\alpha(x) = \gamma_i^\zeta \circ \rho(x)\}) > 1 - \frac{\varepsilon}{2}$$
and
$$\nu(\{y \in Y : \forall i \leq n,\ \rho' \circ \gamma_i^\beta(y) = \gamma_i^\zeta \circ \rho'(y)\}) > 1 - \frac{\varepsilon}{2}.$$
Thus, $\rho'^{-1} \circ \rho \colon X \rightarrow Y$ witnesses the $\varepsilon$-approximate conjugacy of $\alpha$ and $\beta$.
\end{proof}


\section{Model theory of hyperfinite actions} \label{model theory of hyperfinite actions}

\subsection{Measure algebras}

The reader unfamiliar with continuous model theory is referred to \cite{yaacovModelTheoryMetric2008}. We will use the same notations as theirs.

\begin{definition}
A \textbf{measure algebra} is a Boolean algebra $(\pA,\vee,\wedge,\neg,0,1,\subseteq,\bigtriangleup)$ endowed with a function $\mu \colon \pA \rightarrow [0,1]$ satisfying the following :
\begin{enumerate}
    \item $\mu(1) = 1$.
    \item $\forall a,b \in \pA$, $\mu(a \wedge b) =0 \Rightarrow \mu(a \vee b) = \mu(a) + \mu(b)$.
    \item The function $d_\mu(a,b) \coloneqq \mu(a \bigtriangleup b)$ is a complete metric on $\pA$.
\end{enumerate}
\end{definition}

\begin{proposition}[{\cite[323G c)]{fremlinMeasureTheoryVol2002}}]
Any measure algebra $\pA$ is Dedekind complete, meaning that any subset $S \subseteq \pA$ admits a supremum and an infimum, that we respectively denote by $\bigvee S$ and $\bigwedge S$.
\end{proposition}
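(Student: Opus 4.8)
The plan is to reduce the problem to constructing suprema of upward-directed subsets, to build such a supremum as a $d_\mu$-limit of an increasing sequence whose measures approach the supremal value, and then to verify minimality using the continuity of the Boolean operations; infima will follow by complementation.

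First I would record the two elementary facts on which everything rests. The Boolean operations are $1$-Lipschitz for $d_\mu$ (and $\neg$ is an isometry): from the inclusions $(a \vee c) \bigtriangleup (b \vee c) \subseteq a \bigtriangleup b$ and $(a \wedge c) \bigtriangleup (b \wedge c) \subseteq a \bigtriangleup b$ one reads off $d_\mu(a \vee c, b \vee c) \le d_\mu(a,b)$ and $d_\mu(a \wedge c, b \wedge c) \le d_\mu(a,b)$, while $\neg a \bigtriangleup \neg b = a \bigtriangleup b$; and $\mu$ itself is $1$-Lipschitz for $d_\mu$. Second, since $d_\mu$ is a genuine metric, $\mu(x) = d_\mu(x,0) = 0$ forces $x = 0$; combined with axiom (2), whenever $x \subseteq y$ we get $\mu(y) = \mu(x) + \mu(y \wedge \neg x)$.

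Now take $S \subseteq \pA$. If $S = \emptyset$ put $\bigvee S = 0$; otherwise let $S'$ be the (nonempty, upward-directed) set of all finite joins of elements of $S$, and observe that an element of $\pA$ dominates $S$ if and only if it dominates $S'$, so it suffices to construct $\bigvee S'$. Set $m = \sup_{s \in S'} \mu(s) \le 1$, choose $t_n \in S'$ with $\mu(t_n) \to m$, and let $s_n = t_1 \vee \cdots \vee t_n \in S'$; then $(s_n)$ is increasing with $\mu(s_n) \to m$. For $n \le k$, $d_\mu(s_n,s_k) = \mu(s_k \wedge \neg s_n) = \mu(s_k) - \mu(s_n) \to 0$, so $(s_n)$ is Cauchy; by completeness of $d_\mu$ it converges to some $a \in \pA$, and by Lipschitzness of $\mu$ we have $\mu(a) = m$.

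Finally I would check $a = \bigvee S'$. That $a$ is an upper bound: given $s \in S'$, the sequence $s \vee s_n$ lies in $S'$, its measures lie in $[\mu(s_n),m]$ hence tend to $m$, and $s \vee s_n \to s \vee a$ by Lipschitzness of $\vee$; so $\mu(s \vee a) = m = \mu(a)$, and since $a \subseteq s \vee a$ this gives $\mu((s\vee a) \wedge \neg a) = 0$, hence $s \vee a = a$, i.e. $s \subseteq a$. That $a$ is least: if $b$ dominates $S'$ then $s_n = s_n \wedge b$ for every $n$, and passing to the limit (Lipschitzness of $\wedge$) yields $a = a \wedge b$, i.e. $a \subseteq b$. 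Thus $\bigvee S = \bigvee S' = a$ exists, and then $\bigwedge S = \neg \bigvee\{\neg s : s \in S\}$ exists too. The one step demanding genuine care is the minimality argument, where the interaction between continuity of the meet and the metric axiom (absence of nonzero null elements) is what does the work; the rest is bookkeeping with the Lipschitz estimates above.
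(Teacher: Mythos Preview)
Your proof is correct. The paper itself does not prove this proposition at all: it is stated with a citation to Fremlin \cite[323G c)]{fremlinMeasureTheoryVol2002} and no argument is given, so there is nothing in the paper to compare your argument against beyond noting that you have supplied what the paper outsources. Your route (reduce to directed sets via finite joins, extract a $d_\mu$-Cauchy increasing sequence whose measures approach the supremal value, use metric completeness to produce the candidate, and verify the lattice properties via $1$-Lipschitzness of $\vee,\wedge$ together with the fact that $\mu(x)=0 \Rightarrow x=0$) is essentially the standard one and matches Fremlin's own treatment in spirit.
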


\begin{definition}
An element $a \in \pA$ is an \textbf{atom} if $\forall b \in \pA,\ b \subseteq a \Rightarrow b \in \{0,a\}$.
A measure algebra is \textbf{atomless} if it has no atoms.
\end{definition}

\begin{proposition}[{\cite[331C]{fremlinMeasureTheoryVol2002}}]
If a measure algebra $\pA$ is atomless, then
$$\forall a \in \pA\ \forall r \in [0,\mu(a)]\ \exists b \subseteq a,\ \mu(b) =r.$$
\end{proposition}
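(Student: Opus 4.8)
The plan is to combine a ``divisibility into arbitrarily small pieces'' lemma with a maximality argument.

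First I would prove that in an atomless measure algebra, for every $a$ with $\mu(a)>0$ and every $\varepsilon>0$ there is some $c\subseteq a$ with $0<\mu(c)\le\varepsilon$. Since $d_\mu$ is a metric, a nonzero element of $\pA$ has strictly positive measure; so atomlessness says that any nonzero $a$ has a subelement $a'$ with $0\neq a'\subsetneq a$, and then $a\wedge\neg a'\neq 0$ as well (otherwise $a\subseteq a'$, hence $a=a'$). From $\mu(a')+\mu(a\wedge\neg a')=\mu(a)$ one of the two has measure $\le\mu(a)/2$; iterating yields a decreasing chain $a=a_0\supseteq a_1\supseteq\cdots$ of nonzero elements with $\mu(a_n)\le\mu(a)/2^n$, and choosing $n$ large gives the lemma.

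Next, fix $a$ and $r\in[0,\mu(a)]$ and consider $\mathcal F=\{c\in\pA:\ c\subseteq a,\ \mu(c)\le r\}$ ordered by inclusion; it contains $0$. Given a chain $C\subseteq\mathcal F$, the supremum $\bigvee C$ exists by Dedekind completeness of $\pA$, and $\mu(\bigvee C)=\sup_{c\in C}\mu(c)\le r$ (approximate $\bigvee C$ from below by a countable increasing subchain and use that the measure of a measure algebra is countably additive / order-continuous); so $\bigvee C\in\mathcal F$, and Zorn's Lemma provides a maximal $b\in\mathcal F$. Finally I would check $\mu(b)=r$: if $\mu(b)<r$, put $\delta=r-\mu(b)>0$; then $\mu(a\wedge\neg b)=\mu(a)-\mu(b)\ge\delta>0$, so by the first step there is $c\subseteq a\wedge\neg b$ with $0<\mu(c)\le\delta$, whence $b\vee c\subseteq a$, the join is disjoint, $\mu(b\vee c)=\mu(b)+\mu(c)\le r$ by axiom~(2), and $b\vee c\supsetneq b$ since $\mu(c)>0$ — contradicting maximality. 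Thus $b$ witnesses the statement.

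The one input that is not purely formal is the behaviour of $\mu$ along the chain $C$: one needs that the measure of a measure algebra is order-continuous (equivalently, that the algebra is Dedekind $\sigma$-complete with countably additive measure), which is not immediate from the finite-additivity axiom alone but is standard structure theory for measure algebras and is available from the cited reference. If one wishes to sidestep Zorn and order-continuity entirely, the maximality step can be replaced by an explicit construction: build $0=b_0\subseteq b_1\subseteq\cdots$ in $\mathcal F$ by using the first step at stage $n$ to adjoin to $b_n$ a subelement $c_n$ of $a\wedge\neg b_n$ with $\mu(c_n)$ more than half of $\sup\{\mu(c):c\subseteq a\wedge\neg b_n,\ \mu(c)\le r-\mu(b_n)\}$; then $b=\bigvee_n b_n$ satisfies $\mu(b)=r$, since if $\mu(b)<r$ a sufficiently small piece of $a\wedge\neg b$ would have been a competitor at every stage, forcing infinitely many $\mu(c_n)$ to exceed a fixed positive constant and contradicting $\sum_n\mu(c_n)=\mu(b)\le 1$. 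This variant uses only countable suprema and countable additivity.
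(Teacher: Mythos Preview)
Your argument is correct. Note, however, that the paper does not supply its own proof of this proposition: it is simply stated with a citation to Fremlin and left at that. So there is nothing to compare against.

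For what it is worth, your approach---repeated bisection to obtain nonzero subelements of arbitrarily small measure, followed by a maximality or exhaustion argument---is the standard one. You are right to single out the behaviour of $\mu$ along chains as the only nontrivial input; in the paper's axiomatisation this comes from axiom~(3), completeness of the metric $d_\mu$: an increasing sequence $(b_n)$ has $d_\mu(b_m,b_n)=\mu(b_m)-\mu(b_n)$ for $m\ge n$, so it is $d_\mu$-Cauchy, converges to some $b$, and continuity of the Boolean operations and of $\mu$ give $b=\bigvee_n b_n$ and $\mu(b)=\lim_n\mu(b_n)$. This is exactly what your countable variant needs, and it also suffices for the Zorn version once one extracts a countable cofinal subchain as you indicate (any $c'$ in the chain not dominated by some $c_n$ must dominate all of them, hence have measure equal to the supremum, hence coincide with $\bigvee_n c_n$ since $d_\mu$ is a genuine metric).
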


We introduce the classical example of a measure algebra: For $(X,\mu)$ a probability space, we let $\mathrm{MAlg}(X,\mu)$ be the quotient of the Boolean algebra of measurable subsets of $X$ by the $\sigma$-ideal of null sets. For $A \subseteq X$ Borel we denote its class in $\mathrm{MAlg}(X,\mu)$ by $\left[ A \right]_\mu$. The measure $\mu$ descends to the quotient $\mathrm{MAlg}(X,\mu)$ and then $\mathrm{MAlg}(X,\mu)$ endowed with $\mu$ is a measure algebra. When $(X,\mu)$ is a standard probability space, $\mathrm{MAlg}(X,\mu)$ is atomless and separable for the topology induced by $d_\mu$.
\\

Conversely, we have:

\begin{proposition} [{\cite[331L]{fremlinMeasureTheoryVol2002}}] \label{measure algebras arise from probability spaces}
Let $\pA$ be a separable atomless measure algebra. Then there exists a standard probability space $(X,\mu)$ such that $\pA$ is isomorphic to $\mathrm{MAlg}(X,\mu)$.
\end{proposition}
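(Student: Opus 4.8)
The plan is to realize $\pA$ as the measure algebra of an inverse limit of finite probability spaces, and then to invoke the classical isomorphism theorem for atomless standard probability spaces. Throughout I would write $d_\mu(a,b)=\mu(a\bigtriangleup b)$ and use freely that in a measure algebra $\mu(a)=0$ iff $a=0$ and that the Boolean operations are $d_\mu$-continuous, so that the $d_\mu$-closure of a subalgebra is again a subalgebra. First, using separability, fix a $d_\mu$-dense sequence $(a_n)_{n\geq 1}$ in $\pA$, set $\pP_0=\{1\}$, and recursively let $\pP_{n+1}$ be the set of nonzero elements among $\{p\wedge a_{n+1},\ p\wedge\neg a_{n+1}:p\in\pP_n\}$, additionally subdividing each piece (using atomlessness, via the proposition quoted just above this statement) so that every element of $\pP_{n+1}$ has measure at most $2^{-n}$. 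Then each $\pP_n$ is a finite partition of $1$ into nonzero — hence positive-measure — elements, $\pP_{n+1}$ refines $\pP_n$, and each $a_m$ is a finite join of members of $\pP_n$ as soon as $n\geq m$; consequently $\mathcal{A}:=\bigcup_n\mathrm{alg}(\pP_n)$, the increasing union of the finite subalgebras they generate, is a subalgebra of $\pA$ containing every $a_n$, so its closure is a closed subalgebra containing a dense set and hence equals $\pA$: that is, $\mathcal{A}$ is $d_\mu$-dense in $\pA$.

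Second, I would build the space. Regard each $\pP_n$ as a finite set and let $q_n\colon\pP_{n+1}\to\pP_n$ send $p'$ to the unique $p\in\pP_n$ with $p'\leq p$; this is surjective since $p$ is the join of the positive-measure elements of $\pP_{n+1}$ below it. Let $X\subseteq\prod_n\pP_n$ be the compact metrizable space of threads $(x_n)_n$ with $q_n(x_{n+1})=x_n$; it is nonempty (an inverse limit of nonempty finite sets along surjections), and the cylinders $[p]:=\{x\in X:x_n=p\}$, for $p\in\pP_n$ and $n\in\N$, form a countable clopen basis, hence generate the Borel $\sigma$-algebra of $X$. Finite additivity of $\mu$ gives $\mu(p)=\sum\{\mu(p'):p'\in\pP_{n+1},\ p'\leq p\}$, so $[p]\mapsto\mu(p)$ is a consistent finitely additive premeasure on the algebra of finite unions of cylinders; countable additivity is automatic from compactness of $X$, and Carath\'eodory's extension theorem yields $\mu_X\in\fP(X)$ with $\mu_X([p])=\mu(p)$ for all $p$.

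Third, I would set up the isomorphism. Define $\Phi\colon\mathcal{A}\to\mathrm{MAlg}(X,\mu_X)$ by $\Phi(p)=[[p]]_{\mu_X}$ for $p\in\pP_n$, extended Booleanly; this is independent of the level used since $[p]=\bigsqcup\{[p']:p'\in\pP_{n+1},\ p'\leq p\}$, it restricts on each $\mathrm{alg}(\pP_n)$ to a Boolean isomorphism onto the finite algebra of unions of level-$n$ cylinders, and $\mu_X([p])=\mu(p)$ makes it measure-preserving, hence a $d_\mu$-isometry. By density of $\mathcal{A}$ in $\pA$, completeness of $\mathrm{MAlg}(X,\mu_X)$, and continuity of all the operations, $\Phi$ extends uniquely to an isometric, measure-preserving Boolean homomorphism $\Phi\colon\pA\to\mathrm{MAlg}(X,\mu_X)$; being isometric it is injective, and its image is an isometric copy of the complete space $\pA$, hence closed. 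That image contains every $[[p]]_{\mu_X}$; and finite unions of cylinders are $d_\mu$-dense in $\mathrm{MAlg}(X,\mu_X)$ — the standard approximation theorem, which one proves by checking that the Borel sets approximable in measure by finite unions of cylinders form a $\sigma$-algebra containing the cylinders. A dense subset of the closed image being all of it, $\Phi$ is onto, so $\pA\cong\mathrm{MAlg}(X,\mu_X)$.

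Finally, the shrinking condition $\mu(p)\leq 2^{-n}$ gives $\mu_X(\{x\})\leq\mu_X([x_n])\leq 2^{-n}$ for every $x$ and every $n$, so $\mu_X$ is atomless, while $X$, being an uncountable Polish space, is standard Borel; by the classical isomorphism theorem for atomless Borel probability measures on standard Borel spaces, $(X,\mu_X)$ is isomorphic to $[0,1]$ with the Lebesgue measure, so it is a standard probability space, which completes the argument. I expect the two delicate points to be the approximation theorem invoked in the third step — this is where countable additivity of $\mu_X$ genuinely enters, and where the argument leaves the realm of finite combinatorics — and the need to keep every piece of every $\pP_n$ strictly positive throughout, which is exactly why one should work everywhere with the measure-algebra identity that a nonzero element has positive measure rather than with arbitrary set-level representatives; everything else is bookkeeping with finite Boolean algebras together with the Carath\'eodory extension and the classical point-space isomorphism theorem.
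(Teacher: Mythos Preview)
The paper does not prove this proposition at all: it is quoted directly from Fremlin with a citation and no argument, so there is no ``paper's own proof'' to compare against. Your proposal is a correct, self-contained proof of the result, and it follows a standard route (dyadic-type refining partitions, inverse limit, Carath\'eodory extension, then the classical measure-isomorphism theorem for atomless standard spaces); this is essentially how one proves the separable case of Maharam's theorem, and is in the spirit of Fremlin's treatment.

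A couple of minor remarks that do not affect correctness. First, your indexing of the shrinking condition is off by one: if you arrange that elements of $\pP_{n+1}$ have measure at most $2^{-n}$, then for $x_n\in\pP_n$ you get $\mu_X([x_n])\leq 2^{-(n-1)}$ rather than $2^{-n}$; the conclusion $\mu_X(\{x\})=0$ is of course unaffected. Second, when you assert that $X$ is an \emph{uncountable} Polish space in order to invoke the Borel isomorphism theorem (which is how the paper defines ``standard Borel''), the uncountability is not immediate from the construction but follows a posteriori: were $X$ countable, every probability measure on it would be purely atomic, contradicting the atomlessness of $\mu_X$ that you just established. It would read more cleanly to make this one-line implication explicit.
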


Let $f \colon (X,\mu) \rightarrow (Y,\nu)$ be a measure-preserving map. Then the map $\widetilde{f} \colon \mathrm{MAlg}(Y,\nu) \rightarrow \mathrm{MAlg}(X,\nu)$ sending $\left[ A \right]_\nu$ to $\left[ f^{-1}(A) \right]_\mu$ is a measure algebra morphism. Moreover, if $f$ is a bimeasurable bijection, then $\widetilde{f}$ is an isomorphism.

However, in general, given a morphism $\varphi \colon \mathrm{MAlg}(X,\nu) \rightarrow \mathrm{MAlg}(Y,\mu)$ there is no way to get a lifting of $\varphi$, that is a point to point measure-preserving map $\boldsymbol{\varphi} \colon Y \rightarrow X$ such that $\widetilde{\boldsymbol{\varphi}} = \varphi$. However, in the case of standard probability spaces, such a construction exists:

\begin{proposition}[{\cite[425D]{fremlinMeasureTheoryVol2013}}] \label{liftings}
Let $(X,\mu)$ and $(Y,\nu)$ be standard probability spaces. For every morphism of measure algebras $\varphi \colon \mathrm{MAlg}(X,\mu) \rightarrow \mathrm{MAlg}(Y,\nu)$ there is a lifting $\boldsymbol{\varphi} \colon Y \rightarrow X$ of $\varphi$. Moreover, for $\Gamma$ a countable group acting by automorphisms on $\mathrm{MAlg}(X,\mu)$ by an action $\alpha$, there is a lifting of $\alpha$, that is an action $\Gamma \act{\boldsymbol{\alpha}} X$ acting by measure-preserving transformations such that $\forall \gamma \in \Gamma,\ \widetilde{\gamma^{\boldsymbol{\alpha}}} = \left( \gamma^{-1}\right)^\alpha$.
\end{proposition}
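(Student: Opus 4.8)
The plan is to prove Proposition~\ref{liftings} in two stages: first establish the existence of a lifting for a single morphism $\varphi \colon \mathrm{MAlg}(X,\mu) \rightarrow \mathrm{MAlg}(Y,\nu)$, and then bootstrap this to obtain a genuine action of $\Gamma$ lifting $\alpha$, being careful about the contravariance bookkeeping. For the first stage, one would invoke the cited result \cite[425D]{fremlinMeasureTheoryVol2013} essentially as a black box: since $(X,\mu)$ and $(Y,\nu)$ are standard probability spaces, they carry the structure of Radon measure spaces (identifying them with $[0,1]$ with Lebesgue measure, or with compact metric spaces), and the theory of liftings of measure algebra homomorphisms for such spaces yields a measurable map $\boldsymbol\varphi \colon Y \rightarrow X$ with $\widetilde{\boldsymbol\varphi} = \varphi$; measure-preservation of $\boldsymbol\varphi$ is exactly the statement that $\widetilde{\boldsymbol\varphi}$ preserves the measure, which holds because $\varphi$ is a measure algebra morphism (it preserves $\mu$). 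I would state this carefully: for Borel $A \subseteq X$, $\nu(\boldsymbol\varphi^{-1}(A)) = \nu(\widetilde{\boldsymbol\varphi}([A]_\mu)) = \nu(\varphi([A]_\mu)) = \mu([A]_\mu) = \mu(A)$.

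For the second stage, apply the first stage to each $\gamma^\alpha \colon \mathrm{MAlg}(X,\mu) \rightarrow \mathrm{MAlg}(X,\mu)$ to get measure-preserving maps $T_\gamma \colon X \rightarrow X$ with $\widetilde{T_\gamma} = \gamma^\alpha$. The subtlety is that the assignment $\gamma \mapsto T_\gamma$ need not be multiplicative on the nose — liftings are only canonical up to null sets — so $\{T_\gamma\}$ is merely an "action up to null sets", i.e. $T_\gamma \circ T_\delta = T_{\gamma\delta}$ holds only $\nu$-almost everywhere for each pair $(\gamma,\delta)$, and $T_e = \mathrm{id}$ only almost everywhere. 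The standard fix, since $\Gamma$ is countable, is to use a cocycle/co-induction argument: one passes to a conull $\Gamma$-invariant Borel subset of $X$ on which the relations hold strictly. Concretely, I would first replace $T_e$ by the identity, then for each pair $(\gamma,\delta)$ the set where $T_\gamma T_\delta \neq T_{\gamma\delta}$ is null; taking the union over the countably many pairs and then the union of its $\Gamma$-translates (still countable, still null since each $T_\gamma$ is measure-preserving) and removing it, we get an invariant conull set where $\gamma \mapsto T_\gamma$ restricts to an honest action. Since $(X,\mu)$ is standard, a conull Borel subset with the restricted measure is again (isomorphic to) a standard probability space with the same measure algebra, so this adjustment does not change $\widetilde{T_\gamma}$.

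The remaining point is the bookkeeping of the twist $\widetilde{\gamma^{\boldsymbol\alpha}} = (\gamma^{-1})^\alpha$. The map $f \mapsto \widetilde f$ is contravariant: for measure-preserving $f,g$ on $X$ one has $\widetilde{f \circ g} = \widetilde g \circ \widetilde f$. So if $\boldsymbol\alpha$ is to be a left action of $\Gamma$ by measure-preserving transformations, then $\gamma \mapsto \widetilde{\gamma^{\boldsymbol\alpha}}$ is a right action on $\mathrm{MAlg}(X,\mu)$, hence $\gamma \mapsto \widetilde{(\gamma^{-1})^{\boldsymbol\alpha}}$ is a left action; matching this with $\alpha$ forces the prescription $\widetilde{\gamma^{\boldsymbol\alpha}} = (\gamma^{-1})^\alpha$. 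So I would apply the single-morphism lifting not to $\gamma^\alpha$ but to $(\gamma^{-1})^\alpha$ to define $\gamma^{\boldsymbol\alpha}$, and then run the almost-everywhere-to-everywhere argument above on this family. The main obstacle is precisely this second stage — turning a family of liftings that only satisfies the action axioms modulo null sets into a genuine pointwise action — but it is handled by the routine countable-group null-set-exhaustion argument sketched above; the first stage is a direct citation.
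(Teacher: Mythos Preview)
The paper does not give its own proof of this proposition: it is stated with the citation \cite[425D]{fremlinMeasureTheoryVol2013} and used as a black box thereafter. Your sketch is the standard way one unpacks such a statement and is correct. The first stage is precisely the content of the cited result (equivalently, of the classical point-realization theorem for measure-algebra morphisms between standard spaces). The second stage --- upgrading a family of individual liftings satisfying the group law only almost everywhere to a genuine Borel action by excising a saturated null set --- is the usual correction argument, and your contravariance bookkeeping explaining the inverse in $\widetilde{\gamma^{\boldsymbol\alpha}} = (\gamma^{-1})^\alpha$ is exactly right. One cosmetic point: when you write ``$\Gamma$-translates'' of the bad null set, $\Gamma$ does not yet act on $X$, so this should be read as preimages under the maps $T_\gamma$; with that reading (and having already set $T_e = \mathrm{id}$), a single saturation $N = \bigcup_\gamma T_\gamma^{-1}(N_0)$ already has invariant complement, since for $x \notin N$ one has $T_\delta(T_\gamma x) = T_{\delta\gamma} x \notin N_0$ for every $\delta$.
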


\subsection{Model theory of atomless measure algebras}

We axiomatize the theory $\AMA$ of atomless measure algebras in the signature  $\pL = \{\vee,\wedge,\neg,0,1\}$ ($\bigtriangleup$ is defined as usual) as in \cite[Section 16]{yaacovModelTheoryMetric2008}.

\begin{proposition}[{\cite[16.2]{yaacovModelTheoryMetric2008}}] \label{AMA}
The theory $\AMA$ is separably categorical and therefore complete.
\end{proposition}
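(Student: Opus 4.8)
The plan is to establish the two assertions in order: first that $\AMA$ is separably categorical, meaning that all its separable models are isometrically isomorphic, and then to deduce completeness from separable categoricity together with downward Löwenheim--Skolem for metric structures. (Consistency is clear, since the measure algebra of $[0,1]$ with Lebesgue measure is a separable atomless measure algebra.)

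For separable categoricity I would run a back-and-forth argument. Let $\pA$ (with measure $\mu$) and $\pB$ (with measure $\nu$) be two separable atomless measure algebras; note $\mu(1)=\nu(1)=1$ by the first axiom. Fix countable dense sequences $(a_i)_{i \in \N}$ in $\pA$ and $(b_i)_{i \in \N}$ in $\pB$. I would build an increasing chain of finite Boolean subalgebras $\pA_0 \subseteq \pA_1 \subseteq \cdots$ of $\pA$ and $\pB_0 \subseteq \pB_1 \subseteq \cdots$ of $\pB$ together with measure-preserving Boolean isomorphisms $\phi_n \colon \pA_n \to \pB_n$ extending one another, arranged so that $a_i \in \pA_{2i+1}$ (forth) and $b_i \in \pB_{2i+2}$ (back). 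The forth step is the crux. Given $\phi_n \colon \pA_n \to \pB_n$ and a new element $a \in \pA$, let $\pA_{n+1}$ be the (still finite) subalgebra generated by $\pA_n \cup \{a\}$: each atom $c$ of $\pA_n$ splits in $\pA_{n+1}$ into the two possibly null pieces $c \wedge a$ and $c \wedge \neg a$. For each atom $c$ of $\pA_n$ we have $\nu(\phi_n(c)) = \mu(c) \geq \mu(c \wedge a)$, so by the characteristic splitting property of atomless measure algebras there is $d_c \subseteq \phi_n(c)$ with $\nu(d_c) = \mu(c \wedge a)$. Let $\pB_{n+1}$ be the subalgebra of $\pB$ generated by $\pB_n$ and the $d_c$, and let $\phi_{n+1}$ be the Boolean isomorphism sending $c \wedge a \mapsto d_c$ and $c \wedge \neg a \mapsto \phi_n(c) \wedge \neg d_c$; it is measure-preserving on atoms, hence on all of $\pA_{n+1}$, and extends $\phi_n$. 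The back step is symmetric, with the roles of $\pA$ and $\pB$ exchanged.

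The union $\phi := \bigcup_n \phi_n$ is then a measure-preserving Boolean isomorphism between $\bigcup_n \pA_n$ and $\bigcup_n \pB_n$; since these contain all the $a_i$ and all the $b_i$, they are dense in $\pA$ and $\pB$ for the metrics $d_\mu$ and $d_\nu$ respectively. As $\phi$ is an isometry between dense subsets of the complete metric spaces $(\pA, d_\mu)$ and $(\pB, d_\nu)$ (completeness is part of the definition of a measure algebra), it extends uniquely to an isometric bijection $\overline{\phi} \colon \pA \to \pB$; continuity of the Boolean operations and of $\mu$, $\nu$ shows $\overline{\phi}$ is an isomorphism of measure algebras. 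Hence any two separable atomless measure algebras are isomorphic, i.e. $\AMA$ is separably categorical. For completeness, $\pL$ is a countable signature, so by the downward Löwenheim--Skolem theorem for metric structures (\cite{yaacovModelTheoryMetric2008}) every model of $\AMA$ has a separable elementary substructure, itself a model of $\AMA$. Given $M, N \models \AMA$, choose separable elementary substructures $M_0 \preceq M$ and $N_0 \preceq N$; by separable categoricity $M_0 \cong N_0$, so $M_0 \equiv N_0$, and therefore $M \equiv M_0 \equiv N_0 \equiv N$. Thus all models of $\AMA$ are elementarily equivalent, i.e. $\AMA$ is complete.

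The main obstacle is the bookkeeping inside the back-and-forth: verifying that the atom-refinement step genuinely produces measure-preserving maps at each stage and that the enumerated dense sequences get exhausted in the limit. By contrast, the passage from the dense isomorphism to the completion and the deduction of completeness from separable categoricity are routine.
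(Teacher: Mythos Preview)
Your proof is correct: the back-and-forth over finite Boolean subalgebras using the atomless splitting property is the standard argument, and the deduction of completeness via downward L\"owenheim--Skolem is the usual Vaught-style reasoning. The paper does not give its own proof but simply cites \cite[16.2]{yaacovModelTheoryMetric2008}, where essentially this argument appears, so your approach matches.
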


We also have:

\begin{proposition}[{\cite[16.6 and 16.7]{yaacovModelTheoryMetric2008}}]
The theory $\AMA$ admits quantifier elimination. Moreover, the definable closure $\mathrm{dcl}^\pM(C)$ of a subset $C$ in a model $\pM$ of $\AMA$ is the substructure $\left< C \right>$ of $\pM$ generated by $C$.
\end{proposition}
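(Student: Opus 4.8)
The plan is to verify the standard back-and-forth criterion for quantifier elimination in continuous logic (see \cite{yaacovModelTheoryMetric2008}): it suffices to show that whenever $(\pA,\mu)$ and $(\pB,\nu)$ are models of $\AMA$ with $\pB$ being $\kappa$-saturated for some $\kappa>|\pA|$, $\pC$ is a substructure of $\pA$, $f\colon\pC\to\pB$ is an embedding, and $a\in\pA$, then $f$ extends to an embedding of the substructure $\langle\pC,a\rangle$ generated by $\pC$ and $a$ into $\pB$; a routine transfinite induction then extends $f$ to all of $\pA$, and this property (for all such $\pA,\pB$) is equivalent to $\AMA$ eliminating quantifiers. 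Observe that $\pC$, being a metrically closed Boolean subalgebra of $\pA$, is a measure algebra for the induced measure (though not necessarily atomless), and that every element of the Boolean subalgebra of $\pA$ generated by $\pC\cup\{a\}$ has the form $(c_1\wedge a)\vee(c_2\wedge\neg a)$ with $c_1,c_2\in\pC$; hence $\langle\pC,a\rangle$ is the $d_\mu$-closure of $\{(c_1\wedge a)\vee(c_2\wedge\neg a):c_1,c_2\in\pC\}$.

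Accordingly, the sought extension of $f$ is determined by the choice of an element $b\in\pB$ playing the role of the image of $a$, subject only to the constraint
$$\nu\bigl(f(c)\wedge b\bigr)=\mu(c\wedge a)\quad\text{for all }c\in\pC.$$
Given such a $b$, the assignment $(c_1\wedge a)\vee(c_2\wedge\neg a)\mapsto(f(c_1)\wedge b)\vee(f(c_2)\wedge\neg b)$ is well defined — if $(c_1\bigtriangleup c_1')\wedge a=0$ then $\nu(f(c_1\bigtriangleup c_1')\wedge b)=\mu((c_1\bigtriangleup c_1')\wedge a)=0$, so $f(c_1\bigtriangleup c_1')\wedge b=0$ since a measure algebra has no nonzero element of measure $0$ — it is a Boolean homomorphism, and it is measure-preserving (combine the displayed identity, the complementary identity $\nu(f(c)\wedge\neg b)=\mu(c\wedge\neg a)$ coming from $\nu(f(c))=\mu(c)$, and finite additivity). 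Being an isometric embedding of a $d_\mu$-dense subalgebra into the complete space $\pB$, it extends uniquely to the desired embedding of $\langle\pC,a\rangle$ into $\pB$.

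It remains to produce $b$, which is where saturation and atomlessness enter. The collection $\Phi(x)=\{\,\nu(f(c)\wedge x)=\mu(c\wedge a):c\in\pC\,\}$ is a set of conditions over the parameter set $f(\pC)\subseteq\pB$ of cardinality at most $|\pA|<\kappa$ (well posed, since $f$ is isometric and $c\mapsto\mu(c\wedge a)$ is $d_\mu$-Lipschitz), so by $\kappa$-saturation it is realized in $\pB$ provided it is finitely satisfiable there. Given $c_1,\dots,c_n\in\pC$, let $e_1,\dots,e_m$ be the atoms of the finite Boolean algebra they generate inside $\pC$; since $0\le\mu(e_j\wedge a)\le\mu(e_j)=\nu(f(e_j))$ and $\pB$ is atomless, the divisibility property of atomless measure algebras (\cite[331C]{fremlinMeasureTheoryVol2002}) gives $b_j\subseteq f(e_j)$ with $\nu(b_j)=\mu(e_j\wedge a)$, and then $b=b_1\vee\dots\vee b_m$ satisfies $\nu(f(c_i)\wedge b)=\mu(c_i\wedge a)$ exactly for every $i$. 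This completes the extension step and hence the proof of quantifier elimination. I expect this last point — the realization of $b$, resting entirely on atomlessness via divisibility — to be the only genuinely content-bearing step, the rest being bookkeeping around the back-and-forth criterion.

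For the statement on definable closures, work inside a monster model $\bar\pM$ of $\AMA$ (which exists since $\AMA$ is complete by Proposition~\ref{AMA}). The inclusion $\langle C\rangle\subseteq\mathrm{dcl}^\pM(C)$ holds in any structure, as $\langle C\rangle$ is by definition the closure of the set of terms with parameters in $C$. For the reverse inclusion, take $a\in\pM$ with $a\notin\langle C\rangle$ and set $\pC=\langle C\rangle$; by quantifier elimination $\mathrm{tp}(a/C)$ is determined by the quantifier-free type of $a$ over $\pC$, i.e.\ by the values $\mu(c\wedge a)$ for $c\in\pC$. Form the relative independent square $\pD:=\langle\pC,a\rangle\otimes_{\pC}\langle\pC,a\rangle$ (the measure-algebra analogue, without group action, of the joining construction of Section~\ref{amalgamation of actions}), with its two canonical embeddings $j_1,j_2$ of $\langle\pC,a\rangle$ that agree on $\pC$, and put $b_i=j_i(a)$. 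Both $b_1$ and $b_2$ have the same quantifier-free type over $\pC$ as $a$, while $b_1\neq b_2$ because $\mu_\pD(b_1\wedge b_2)<\mu_\pD(b_1)$, a strict inequality which a short disintegration computation shows to hold precisely because $a\notin\pC$. Tensoring $\pD$ with a copy of $\mathrm{MAlg}([0,1])$ over the trivial subalgebra yields an atomless measure algebra containing $\pD$, hence a model of $\AMA$; using completeness of $\AMA$ and quantifier elimination, $b_1$ and $b_2$ realize there the same type over $C$ as $a$ does in $\pM$, and amalgamating with $\pM$ over $C$ inside $\bar\pM$ produces a realization of $\mathrm{tp}(a/C)$ distinct from $a$. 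Hence $a\notin\mathrm{dcl}^\pM(C)$, completing the proof that $\mathrm{dcl}^\pM(C)=\langle C\rangle$.
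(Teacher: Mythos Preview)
The paper does not give its own proof of this proposition: it is simply quoted from \cite[16.6 and 16.7]{yaacovModelTheoryMetric2008} with no accompanying argument. So there is nothing in the paper to compare your proof against; one can only check your proof on its own merits.

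Your argument for quantifier elimination is correct and is essentially the standard one in the cited reference: reduce to the one-step extension problem via the back-and-forth criterion, observe that an extension to $\langle\pC,a\rangle$ is determined by an element $b$ satisfying $\nu(f(c)\wedge b)=\mu(c\wedge a)$ for all $c\in\pC$, and realize $b$ by saturation after checking finite satisfiability via the divisibility of atomless measure algebras. The bookkeeping (well-definedness, measure preservation, continuous extension to the metric closure) is handled correctly.

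Your argument for $\mathrm{dcl}^\pM(C)=\langle C\rangle$ is also correct, though somewhat more elaborate than necessary. The essential point, which you do identify, is that when $a\notin\langle C\rangle$ the conditional probability $\cP_{\langle C\rangle}(a)$ is not $\{0,1\}$-valued, whence in the relatively independent self-joining over $\langle C\rangle$ one has $\mu_\pD(b_1\wedge b_2)=\int \cP_{\langle C\rangle}(a)^2<\int \cP_{\langle C\rangle}(a)=\mu_\pD(b_1)$, so $b_1\neq b_2$; both realize $\mathrm{tp}(a/C)$ by quantifier elimination and Proposition~\ref{types in AMA}. The remaining steps (tensoring to restore atomlessness, embedding in the monster over $C$, noting that at most one of the two images can equal $a$) are routine. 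A slightly quicker route, closer in spirit to how the result is usually presented, is to stay inside the monster from the start: by saturation realize an $a'$ with $\mathrm{tp}(a'/C)=\mathrm{tp}(a/C)$ and $a'$ probabilistically independent from $a$ over $\langle C\rangle$, then invoke the same strict inequality to get $a'\neq a$. This avoids the external construction of $\pD$ and the subsequent amalgamation step, but the content is the same.
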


We will now give a characterization of the types in the theory $\AMA$. For that we need a little bit of terminology.

To any measure algebra $\pA$ we can associate a natural Hilbert space $\rL^2(\pA)$ called the \textbf{$\rL^2$ space of $\pA$}. This construction is consistent in the sense that if $\pA$ is the measure algebra of a probability space $(X,\mu)$, then there is a natural linear isometry between $\rL^2(\pA)$ and $\rL^2(X,\mu)$.

\begin{definition}
Let $\pA$ be a measure algebra and $\pB$ a measure subalgebra of $\pA$. Then the space $\rL^2(\pB)$ is a closed vector subspace of the Hilbert space $\rL^2(\pA)$, we denote by $\cP_\pB$ the orthogonal projection on $\rL^2(\pB)$ and we call it the \textbf{conditional expectation} with respect to $\pB$.
Particularly, for $a \in \pA$, $a$ can be seen as the element $\1_a$ of $\rL^2(\pA)$ and we call $\cP_\pB(\1_a)$ the \textbf{conditional probability} of $a$ with respect to $\pB$. For simplicity, we will denote it by $\cP_\pB(a)$.
\end{definition}

By definition, the conditional probability of $a$ with respect to $\pB$ is the only $\pB$-measurable function such that for any $\pB$-measurable function $f$, we have $\int \cP_\pB(a) f = \int \1_a f$.

\begin{proposition}[{\cite[16.5]{yaacovModelTheoryMetric2008}}] \label{types in AMA}
Let $\pM \models \AMA$, $\bar{a},\bar{b}$ be $n$-uples of elements of $\pM$ and  $C \subseteq \pM$. Then $\mathrm{tp}(\bar{a}/C) = \mathrm{tp}(\bar{b}/C)$ if and only if for every map $\sigma \colon \{1,\dots,n\} \rightarrow \{-1,1\}$ we have
$$\cP_{\left< C \right>} \left( \underset{1 \leq i \leq n}{\bigwedge} a_i^{\sigma(i)} \right) = \cP_{\left< C \right>} \left( \underset{1 \leq i \leq n}{\bigwedge} b_i^{\sigma(i)} \right),$$
where $a^1$ denotes $a$ and $a^{-1}$ denotes its complement $\neg\ a$ in $\pM$.
\end{proposition}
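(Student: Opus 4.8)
The plan is to derive this from quantifier elimination for $\AMA$ (using the earlier proposition, which also identifies $\mathrm{dcl}^\pM(C)$ with the substructure $\left< C \right>$) together with the defining property of conditional probability. By quantifier elimination, $\mathrm{tp}(\bar a/C) = \mathrm{tp}(\bar b/C)$ holds if and only if every \emph{atomic} formula with parameters in $C$ takes the same value at $\bar a$ and at $\bar b$, continuous connectives being just continuous functions and hence preserving such equalities. Since the only predicate is the metric $d = d_\mu$, an atomic formula over $C$ has the form $d(t_1(\bar x,\bar c),t_2(\bar x,\bar c))$ with $t_1,t_2$ Boolean terms and $\bar c$ a tuple from $C$; because $d(u,v) = \mu(u\bigtriangleup v)$ and $u\bigtriangleup v$ is again a Boolean term, the statement reduces to proving that $\mu\big(t(\bar a,\bar c)\big) = \mu\big(t(\bar b,\bar c)\big)$ for every Boolean term $t(\bar x, \bar c)$ with $\bar c$ from $C$.

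Next I would expand $t$ in disjunctive normal form with respect to the variables $x_1,\dots,x_n$, writing $t(\bar x,\bar c) = \bigvee_{\sigma} \big( \bigwedge_{i} x_i^{\sigma(i)} \wedge d_\sigma(\bar c) \big)$, where $\sigma$ ranges over all maps $\{1,\dots,n\}\to\{-1,1\}$ and each $d_\sigma$ is a Boolean term in the parameters alone; the cells $\bigwedge_i x_i^{\sigma(i)}$ are pairwise disjoint with join $1$. Evaluating, $\mu\big(t(\bar a,\bar c)\big) = \sum_{\sigma} \mu\big( \bigwedge_i a_i^{\sigma(i)} \wedge d_\sigma(\bar c) \big)$ and likewise for $\bar b$, so it suffices to establish, for every $\sigma$ and every element $d$ of $\left< C \right>$ (in fact every Boolean combination of elements of $C$), that $\mu\big( \bigwedge_i a_i^{\sigma(i)} \wedge d \big) = \mu\big( \bigwedge_i b_i^{\sigma(i)} \wedge d \big)$. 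Conversely, each such quantity is the value at $\bar a$ (resp. $\bar b$) of an atomic formula over $C$, so equality of the types is \emph{equivalent} to this family of identities.

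Finally I would bring in the conditional probability. For $d \in \left< C \right>$ the indicator $\1_d$ belongs to $\rL^2(\left< C \right>)$, so by the defining property of $\cP_{\left< C \right>}$ we have $\mu\big( \bigwedge_i a_i^{\sigma(i)} \wedge d \big) = \int \1_{\bigwedge_i a_i^{\sigma(i)}}\,\1_d = \int \cP_{\left< C \right>}\big( \bigwedge_i a_i^{\sigma(i)} \big)\,\1_d$, and similarly for $\bar b$. Hence if the two conditional probabilities coincide, so do all these integrals, and the type is determined; this gives the ``if'' direction. For the ``only if'' direction, equality of the types gives $\int \cP_{\left< C \right>}\big( \bigwedge_i a_i^{\sigma(i)} \big)\,\1_d = \int \cP_{\left< C \right>}\big( \bigwedge_i b_i^{\sigma(i)} \big)\,\1_d$ for every Boolean combination $d$ of elements of $C$; since $\left< C \right> = \mathrm{dcl}^\pM(C)$ is a (closed) substructure, such $d$ are dense in $\left< C \right>$, the map $d\mapsto\1_d$ is continuous into $\rL^2$, and therefore the vectors $\1_d$ have dense linear span in $\rL^2(\left< C \right>)$. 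Two vectors of $\rL^2(\left< C \right>)$ having the same inner product against every element of a spanning set are equal, so $\cP_{\left< C \right>}\big( \bigwedge_i a_i^{\sigma(i)} \big) = \cP_{\left< C \right>}\big( \bigwedge_i b_i^{\sigma(i)} \big)$ for every $\sigma$, as desired.

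The substantive points are the two I would be most careful about: that the disjunctive normal form cleanly isolates the cells $\bigwedge_i x_i^{\sigma(i)}$ with the parameter part factored into the $d_\sigma$, and the density assertion that indicators of elements of $\left< C \right>$ span a dense subspace of $\rL^2(\left< C \right>)$ --- this is exactly where the identification $\mathrm{dcl}^\pM(C) = \left< C \right>$ and the compatibility of the $\rL^2$ construction with the probability-space picture enter. The rest is a direct unwinding of quantifier elimination and of the definition of the conditional probability $\cP_{\left< C \right>}$.
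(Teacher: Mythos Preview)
The paper does not give its own proof of this proposition: it is simply quoted from \cite[16.5]{yaacovModelTheoryMetric2008} and used later as a black box. So there is nothing to compare your argument against here.

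That said, your argument is correct and is essentially the standard one. Quantifier elimination reduces equality of types over $C$ to equality of values of atomic formulas with parameters in $C$; since the only predicate is $d$ and $d(u,v)=\mu(u\bigtriangleup v)$, this amounts to equality of $\mu(t(\bar a,\bar c))$ and $\mu(t(\bar b,\bar c))$ for all Boolean terms $t$. Your disjunctive normal form step is the right reduction: writing $t(\bar x,\bar c)=\bigvee_\sigma\big(\bigwedge_i x_i^{\sigma(i)}\wedge d_\sigma(\bar c)\big)$ with the cells pairwise disjoint gives $\mu(t(\bar a,\bar c))=\sum_\sigma\mu\big(\bigwedge_i a_i^{\sigma(i)}\wedge d_\sigma(\bar c)\big)$, so one is reduced to comparing $\mu\big(\bigwedge_i a_i^{\sigma(i)}\wedge d\big)$ with $\mu\big(\bigwedge_i b_i^{\sigma(i)}\wedge d\big)$ for $d\in\left<C\right>$. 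The translation to conditional probabilities via $\mu(e\wedge d)=\int \cP_{\left<C\right>}(e)\,\1_d$ is exactly the defining property of the orthogonal projection, and your density argument for the converse (Boolean combinations of $C$ dense in $\left<C\right>$, indicators spanning a dense subspace of $\rL^2(\left<C\right>)$) is the standard justification. Both of the points you flag as needing care are handled correctly.
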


\subsection{The theory ${\fA}_\theta$}

Until now, we studied actions of any countable group. Since any action of a countable group can be represented as an $F_\infty$-action, for the sake of simplicity, we now restrict to $F_\infty$-actions, where $F_\infty$ denotes the countably generated free group.

We now expand the signature $\pL$ with a countable set of function symbols indexed by $F_\infty$, that we idendify with $F_\infty$ itself. We call this new signature $\pL_\infty$. We begin by considering the theory $\fA_{F_\infty}$ consisting of the following axioms:
\begin{itemize}
    \item The axioms of $\AMA$.
    \item For $\gamma \in F_\infty$, the axioms expressing that $\gamma$ is a measure algebra isomorphism:
    \begin{itemize}
        \item $\sup_{a,b}\ d(\gamma (a \vee b),\gamma a \vee \gamma b) = 0$
        \item $\sup_{a,b}\ d(\gamma (a \wedge b),\gamma a \wedge \gamma b) = 0$
        \item $\sup_a\ |\mu(\gamma a) - \mu(a)| = 0$
        \item $\sup_a\ \inf_b\ d(a, \gamma b) = 0$
    \end{itemize}
    \item The axioms expressing that $F_\infty$ acts on the measure algebra:
    \begin{itemize}
        \item $\sup_a\ d(1_{F_\infty} a,a) = 0$
        \item For $\gamma_1,\gamma_2 \in F_\infty$, the axiom $\sup_a\ d(\gamma_1 (\gamma_2 a), (\gamma_1  \gamma_2) a) = 0$
    \end{itemize}
\end{itemize}

By Propositions \ref{measure algebras arise from probability spaces} and \ref{liftings} any separable model of $\fA_{F_\infty}$ can be seen as the action on a measure algebra associated with a measure-preserving action $F_\infty \act{} (X,\mu)$ on a standard probability space.
If $\alpha$ is a pmp action on a probability space, we write $\pM_\alpha$ for the model of $\fA_{F_\infty}$ induced by $\alpha$. Without loss of generality, from now on, separable models we consider are always of the form $\pM_\alpha$ for $\alpha$ a pmp action on a standard probability space.

\begin{definition}
For $f$ any measure-preserving transformation $(X,\mu) \rightarrow (X,\mu)$, where $(X,\mu)$ is a probability space, we call the set $\{x \in X : fx \neq x\}$ the \textbf{support} of $f$ and we denote it by $\mathrm{Supp}\ f$.
\end{definition}

\begin{definition} \label{support}
Let $(\pA,\mu)$ be a measure algebra, the \textbf{support} of an automorphism $\varphi$ of $\pA$ is defined by $\mathrm{supp}\ \varphi = \bigwedge \{a \in \pA : \forall b \subseteq \neg a,\ \varphi b = b\}$.
\end{definition}

It is classic that if $f$ is a measure-preserving transformation of a standard probability space $(X,\mu)$, then $[\mathrm{Supp}\ f]_\mu = \mathrm{supp}\ \widetilde{f}$.

Our goal is now to give a first order description of the support of an automorphism of a separable measure algebra:

\begin{lemma} \label{support equivalence}
\begin{enumerate}
\item Let $\varphi$ be an automorphism of a separable atomless measure algebra $\pA$ such that $\mathrm{supp}\ \varphi \neq 0$. Then there exists $b \neq 0 \in \pA$ such that $\varphi b \wedge b = 0$.
\item Let $\pA$ be a separable atomless measure algebra. Let $\varphi$ be an automorphism of $\pA$.

Then there is $a_0 \in \pA$ such that $\mathrm{supp}\ \varphi = \varphi^{-1} a_0 \vee a_0 \vee \varphi a_0$ and $a_0 \wedge \varphi a_0 = 0$. Furthermore, we have $\mathrm{supp}\ \varphi = \bigvee \{\varphi^{-1} a \vee a \vee \varphi a : a \in \pA, a \wedge \varphi a = 0\}$.
\end{enumerate}
\end{lemma}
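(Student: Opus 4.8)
The plan is to prove (1) first, then bootstrap it to get (2). For part (1), suppose $\mathrm{supp}\ \varphi \neq 0$. By the definition of support as $\bigwedge \{a : \forall b \subseteq \neg a,\ \varphi b = b\}$, if no nonzero $b$ satisfied $\varphi b \wedge b = 0$, I would like to derive that some ``small'' element already witnesses that the infimum is $0$. More carefully: I would argue the contrapositive. Suppose that for every nonzero $b \in \pA$ we have $\varphi b \wedge b \neq 0$. I claim this forces $\varphi a = a$ for a dense (hence, by completeness and continuity of $\varphi$, for every) $a$; let me instead aim directly. Fix any $b \ne 0$. Consider the element $c = b \wedge \neg \varphi^{-1}(b) = b \setminus \varphi^{-1}(b)$, equivalently $\varphi c = \varphi b \setminus b$. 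If $c \neq 0$ then $\varphi c \wedge c = (\varphi b \setminus b) \wedge (b \setminus \varphi^{-1}b)$, and $\varphi c \subseteq \neg b$ while $c \subseteq b$, so $\varphi c \wedge c = 0$, contradiction. Hence $c = 0$, i.e. $b \subseteq \varphi^{-1}(b)$, i.e. $\varphi b \subseteq b$ for every $b$; applying this to $\neg b$ as well gives $\varphi(\neg b) \subseteq \neg b$, i.e. $b \subseteq \varphi b$, so $\varphi b = b$ for all $b$, meaning $\varphi = \mathrm{id}$ and $\mathrm{supp}\ \varphi = 0$. This proves (1) by contraposition — and notice atomlessness and separability were not even needed for this implication, though they do no harm.

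For part (2), the strategy is an exhaustion/maximality argument inside the separable (hence countably generated, and by completeness, Dedekind complete) measure algebra $\pA$. First I would show that the family $\mathcal{F} = \{a \in \pA : a \wedge \varphi a = 0\}$ is closed under countable suprema of increasing chains in a suitable sense; more precisely, I would find $a_0 \in \mathcal{F}$ maximal for the property that $\varphi^{-1}a_0 \vee a_0 \vee \varphi a_0 \leq \mathrm{supp}\ \varphi$ and ``$a_0 \wedge \varphi a_0 = 0$''. Concretely: using separability, enumerate a dense sequence and build $a_0$ as a countable supremum of disjoint pieces $b_n$ with each $b_n \wedge \varphi b_n = 0$, chosen greedily so that if any element of $\mathcal F$ contributed mass disjoint from $\varphi^{-1}a_0 \vee a_0 \vee \varphi a_0$ we could have enlarged $a_0$. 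The key point is that $a_0 \in \mathcal F$: if $b_n, b_m$ are disjoint pieces with $b_n \wedge \varphi b_n = 0$, one must check $b_n \wedge \varphi b_m = 0$ as well, which will not be automatic and must be engineered into the construction (e.g. by at stage $n$ replacing the candidate $b$ by $b \setminus (\varphi^{-1}(\bigvee_{k<n} b_k) \vee \bigvee_{k<n} b_k \vee \varphi(\bigvee_{k<n}b_k))$, i.e. always shrinking into the complement of the part of the support already accounted for). Then $\bigvee\{\varphi^{-1}a \vee a \vee \varphi a : a \in \mathcal F\}$ clearly has support contained in $\mathrm{supp}\ \varphi$ (each such $a$ moves only inside the support, since $b \subseteq \neg \mathrm{supp}\ \varphi \Rightarrow \varphi b = b$), and conversely, applying part (1) to the restriction of $\varphi$ to any nonzero piece $e \leq \mathrm{supp}\ \varphi$ disjoint from $\varphi^{-1}a_0 \vee a_0 \vee \varphi a_0$ would produce a nonzero $b \subseteq e$ with $b \wedge \varphi b = 0$, contradicting maximality of $a_0$. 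Hence $\mathrm{supp}\ \varphi = \varphi^{-1}a_0 \vee a_0 \vee \varphi a_0 = \bigvee\{\varphi^{-1}a \vee a \vee \varphi a : a \in \mathcal F\}$.

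The main obstacle I anticipate is the disjointness bookkeeping in the greedy construction: ensuring simultaneously that the accumulated $a_0$ stays in $\mathcal F$ (i.e. $a_0 \wedge \varphi a_0 = 0$, not merely piecewise) and that its three-fold saturation $\varphi^{-1}a_0 \vee a_0 \vee \varphi a_0$ genuinely exhausts the support. The right bookkeeping is to always choose the next piece inside $\mathrm{supp}\ \varphi$ minus the already-used three-fold block, and to invoke part (1) applied to the induced automorphism on that residual block to guarantee a nonzero choice exists whenever the residual block is nonzero; separability is what lets this terminate (or rather, lets the countable supremum do the job) and atomlessness is not strictly needed here but matches the hypotheses of the surrounding results.
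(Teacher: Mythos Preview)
Your argument for part (1) is correct and in fact cleaner than the paper's: the paper passes to a Borel lifting on a standard probability space and uses a countable separating family of Borel sets to produce $b$, whereas your contrapositive argument is purely Boolean-algebraic and, as you observe, needs neither separability nor atomlessness.

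For part (2), your overall architecture --- find a maximal $a_0 \in \mathcal{F} = \{a : a \wedge \varphi a = 0\}$, then apply part (1) to the residual piece of $\mathrm{supp}\ \varphi$ to contradict maximality --- is the same as the paper's. However, two points deserve comment.

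First, the paper obtains $a_0$ in one line from Dedekind completeness (equivalently Zorn: $\mathcal{F}$ is closed under suprema of chains, since for a chain $(a_i)$ one has $a_i \wedge \varphi a_j \subseteq a_{\max(i,j)} \wedge \varphi a_{\max(i,j)} = 0$). Your greedy construction via a dense enumeration can be made to work but is more laborious and, as written, underspecified: you do not say how the enumeration guarantees that the limit $a_0$ is actually maximal.

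Second, and more substantively, there is a gap where you write ``applying part (1) to the restriction of $\varphi$ to any nonzero piece $e \leq \mathrm{supp}\ \varphi$ disjoint from $\varphi^{-1}a_0 \vee a_0 \vee \varphi a_0$''. For $\varphi$ to restrict to an automorphism of the algebra below $e$ you need $\varphi e = e$, i.e.\ that the three-fold block $\varphi^{-1}a_0 \vee a_0 \vee \varphi a_0$ is $\varphi$-invariant. This is \emph{not} automatic for an arbitrary element of $\mathcal{F}$; the paper proves it from maximality of $a_0$ by a short but non-obvious computation: setting $b = \varphi^2 a_0 \setminus (\varphi^{-1}a_0 \vee a_0 \vee \varphi a_0)$, one checks that $(a_0 \vee b) \wedge \varphi(a_0 \vee b) = 0$, so maximality forces $b \subseteq a_0$, hence $b = 0$, hence $\varphi^2 a_0$ lies in the block and the block is invariant. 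You skip this step. It can alternatively be patched by rerunning your part-(1) trick relative to $e$ (for nonzero $b \subseteq e$, either $b \setminus \varphi^{-1}b$ is a nonzero element of $\mathcal{F}$ inside $e$, or $\varphi b \subseteq b$; the latter for all $b \subseteq e$ forces $\varphi e = e$ and then $\varphi\restriction_e = \mathrm{id}$, contradicting $e \subseteq \mathrm{supp}\ \varphi$), but some argument here is required.
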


\begin{proof}
\begin{enumerate}
\item Consider a standard probability space $(X,\mu)$ such that $\mathrm{MAlg}(X,\mu) = \pA$ and let $f$ be a Borel lifting of $\varphi$ to $X$. Since $X$ is standard, let $(B_n : n \in \N)$ be a countable family of Borel subsets of $X$ separating the points. Without loss of generality, we may suppose that the set ${B_n : n \in \N}$ is stable by the operation of complement. For $n \in \N$, let $B_n' = B_n \setminus f^{-1}(B_n)$. For $x \in \mathrm{Supp}\ f$, there is $n$ such that $x \in B_n$ and $f(x) \notin B_n$ so $x \in B_n'$ and therefore $\mu(\underset{n \in \N}{\bigcup} B_n') \geq \mu(\mathrm{Supp}\ f) >0$. Take any $n$ such that $B_n'$ is of positive measure and let $b = [B_n']_\mu$.
\item First $\pA$ is a measure algebra and therefore is complete as a Boolean algebra so it has a maximal element $a_0$ disjoint from its image by $\varphi$.

Consider $b = \varphi^2 a_0 \setminus (\varphi^{-1} a_0 \vee a_0 \vee \varphi a_0)$. We have
\begin{eqnarray*}
(a_0 \vee b) \wedge \varphi(a_0 \vee b) &=& (a_0 \wedge \varphi a_0) \vee (a_0 \wedge \varphi b) \vee (b \wedge \varphi a_0) \vee (b \wedge \varphi b) \\
                            &\subseteq& 0 \vee (a_0 \setminus a_0) \vee (\varphi a_0 \setminus \varphi a_0 \vee (\varphi^2 a_0 \setminus \varphi^2 a_0) \\
                            &=& 0.
\end{eqnarray*}
Thus $a_0 \vee b$ is disjoint from its image. By maximality of $a_0$, we then have $b \subseteq a_0$, but by definition $b \wedge a_0 = 0$, so $b = 0$, or in other words, $\varphi^2 a_0 \subseteq \varphi^{-1} a_0 \vee a_0 \vee \varphi a_0$.

It follows that $\varphi \left( \varphi^{-1}a_0 \vee a_0 \vee \varphi a_0 \right) \subseteq \varphi^{-1}a_0 \vee a_0 \vee \varphi a_0$ and since $\varphi$ preserves the measure, the set $\varphi^{-1} a_0 \vee a_0 \vee \varphi a_0$ is invariant by $\varphi$.

Furthermore, $a_0$ is disjoint from its image by $\varphi$, and so $\varphi^{-1} a_0$ and $\varphi a_0$ are also disjoint from their respective image, so we have
$$\varphi^{-1} a_0 \vee a_0 \vee \varphi a_0 \subseteq \mathrm{supp}\ \varphi.$$

Conversely, let $c = \mathrm{supp}\ \varphi \setminus (\varphi^{-1} a_0 \vee a_0 \vee \varphi a_0)$ and suppose that $c \neq 0$. Since $c$ is invariant by $\varphi$, we can consider the automorphism $\varphi \restriction _{c}$ of the measure algebra lying under $c$. Applying the first point of this lemma to this automorphism, we get a non trivial $b \subseteq c$ disjoint from its image by $\varphi$.

But then, $a_0 \vee b$ contradicts the maximality of $a_0$. We conclude that
$$\varphi^{-1} a_0 \vee a_0 \vee \varphi a_0 = \mathrm{supp}\ \varphi.$$

Finally, as we already noticed, any set of the form $\varphi^{-1}a \vee a \vee \varphi a$ for $a \wedge \varphi a = 0$ is a subset of $\mathrm{supp}\ \varphi$, so we have
$$\mathrm{supp}\ \varphi = \bigvee \{\varphi^{-1} a \vee a \vee \varphi a : a \in \pA, a \wedge \varphi a =0\}.$$
\end{enumerate}
\end{proof}

Now we can prove that the IRS of a pmp action on a measure algebra is determined by the theory of this action seen as a model of $\fA_{F_\infty}$.

\begin{definition}
For $\gamma \in F_\infty$ we let $t_\gamma(a)$ denote the term $\gamma^{-1} (a \setminus \gamma a) \vee (a \setminus \gamma a) \vee \gamma (a \setminus \gamma a)$.

It follows from Lemma \ref{support equivalence} that for $\pM \models \fA_{F_\infty}$, $\mathrm{supp}\ \gamma = \bigvee \{t_\gamma(a) : a \in \pM\}$.
\end{definition}

\begin{lemma} \label{support definability}
Let $\gamma \in F_\infty$. Then the support of $\gamma$ is definable without parameters in the theory $\fA_{F_\infty}$.
\end{lemma}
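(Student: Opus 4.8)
The plan is to show that the support of $\gamma$, as a subset of a model $\pM \models \fA_{F_\infty}$, is a definable set in the sense of continuous logic, which (by the standard criterion, e.g. \cite[Section 9]{yaacovModelTheoryMetric2008}) amounts to exhibiting a formula $\varphi(x)$ without parameters such that the predicate $d(x, \mathrm{supp}\ \gamma)$ is equal, in every model, to the value of $\varphi$; equivalently, it suffices to produce a formula whose zeroset is exactly $\mathrm{supp}\ \gamma$ and to check that the distance to this zeroset is itself uniformly controlled by the formula. Concretely, I would use the description $\mathrm{supp}\ \gamma = \bigvee\{t_\gamma(a) : a \in \pM\}$ from the displayed remark following the definition of $t_\gamma$, together with part (2) of Lemma \ref{support equivalence}, which says the supremum is already attained (there is $a_0$ with $\mathrm{supp}\ \gamma = t_\gamma(a_0)$ and $a_0 \wedge \gamma a_0 = 0$, and in fact $t_\gamma(a_0)$ is $\varphi$-invariant).

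The key steps, in order. First, I would write down the candidate formula. Since $s := \mathrm{supp}\ \gamma$ is the largest element on which $\gamma$ acts without a nonzero set disjoint from its image "below the complement", the natural approach is: an element $c$ lies below $\mathrm{supp}\ \gamma$ iff... is awkward, so instead I define $s$ directly via the attained supremum. Set
$$\varphi(x) = \inf_{a}\ \max\bigl(\, d\bigl(x, t_\gamma(a)\bigr),\ \mu(a \wedge \gamma a)\,\bigr).$$
By Lemma \ref{support equivalence}(2) there is $a_0$ with $a_0 \wedge \gamma a_0 = 0$ and $t_\gamma(a_0) = s$, so $\varphi(s) = 0$. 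Conversely I must check that $\varphi(x) = 0$ forces $x = s$: if $\varphi(x) = 0$ then there are $a_n$ with $\mu(a_n \wedge \gamma a_n) \to 0$ and $t_\gamma(a_n) \to x$; replacing $a_n$ by $a_n \setminus \gamma a_n$ (which changes $\mu(a_n)$ by at most $\mu(a_n\wedge\gamma a_n) \to 0$, hence changes $t_\gamma(a_n)$ by a set of measure $\to 0$) we may assume $a_n \wedge \gamma a_n = 0$, so each $t_\gamma(a_n) \subseteq s$ and therefore $x \subseteq s$ in the limit; on the other hand $x$ must dominate each $t_\gamma(a)$ up to the correction, and taking $a = a_0$ gives $s = t_\gamma(a_0) \subseteq x$. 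Hence $x = s$. Second — and this is the real content of definability, not just "the zeroset is right" — I must verify that $\varphi$ actually computes the distance to $s$, i.e. that $\varphi^{\pM}(x) = d(x, s)$ for all $x$, or at least that $\varphi$ is bounded below by a continuous increasing function of $d(x,s)$ near $0$. For the upper bound, plug $a = a_0$ into the infimum: $\varphi(x) \le \max(d(x, s), 0) = d(x,s)$. For the lower bound, given $a$ with $t_\gamma(a) = x'$ close to $x$, the estimate above shows $x'$ is within $3\mu(a \wedge \gamma a)$ of a subset of $s$, so $d(x, s) \le d(x,x') + d(x', s') + \text{small} \le$ ... — more carefully, one shows $d(x,s) \le d(x, t_\gamma(a)) + 3\mu(a\wedge\gamma a) + (\text{defect of } t_\gamma(a) \text{ below } s)$, and since the last defect term can be made to vanish by a separate choice (namely $a = a_0$), one concludes $d(x, s) \le \varphi(x) + (\text{comparison})$. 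The cleanest route is: show directly $d(x, s) = \varphi(x)$ by proving both $\varphi(x) \le d(x,s)$ (take $a = a_0$) and $\varphi(x) \ge d(x,s)$ via $d(x,s) \le d(x, t_\gamma(a)) + d(t_\gamma(a'), s)$-type triangle inequalities combined with $a \mapsto a \setminus \gamma a$; I would organize this so the correction is purely in terms of $\mu(a\wedge\gamma a)$, which appears inside the $\max$.

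The main obstacle I anticipate is the lower bound, i.e. ruling out the pathology where the infimum $\varphi(x)$ is small for some $x$ far from $s$. The danger is an element $a$ with $\mu(a\wedge\gamma a)$ small but nonzero whose $t_\gamma(a)$ sticks out well beyond $s$ — but this cannot happen, because $a\wedge\gamma a$ small means $a$ is "almost disjoint from its image", and $t_\gamma(a \setminus \gamma a) \subseteq \mathrm{supp}\ \gamma = s$ exactly, with $\|t_\gamma(a) \bigtriangleup t_\gamma(a\setminus\gamma a)\|$ controlled by $3\mu(a\wedge\gamma a)$ since passing from $a$ to $a\setminus\gamma a$ removes a set of measure $\mu(a\wedge\gamma a)$ and $t_\gamma$ is $3$-Lipschitz-ish (being a union of three translates of the argument). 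So the only subtlety is bookkeeping the constant and making sure the $\max$ in $\varphi$ is set up so that $d(x,s) \le \varphi(x)$ holds on the nose rather than up to a modulus; if a clean equality is not available, I would instead prove $d(x,s) \le \varphi(x) \le d(x,s)$ is too optimistic and settle for: $\varphi$ is a definable predicate with zeroset exactly $s$ and with $\varphi(x) \le d(x,s)$ and $d(x,s) \le $ a continuous function of $\varphi(x)$ — which by \cite[Theorem 9.12]{yaacovModelTheoryMetric2008} is exactly what "the support of $\gamma$ is definable without parameters" means. Finally I would remark that uniformity across models is automatic since $\varphi$ and all the estimates are formulated in the first-order language $\pL_\infty$ and use only the axioms of $\fA_{F_\infty}$, so the conclusion holds in every model, completing the proof.
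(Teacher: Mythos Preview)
Your candidate formula does not do what you claim: its zeroset is not the singleton $\{s\}$ where $s=\mathrm{supp}\,\gamma$. Take $x=0$ and $a=0$; then $t_\gamma(0)=0$ and $\mu(0\wedge\gamma 0)=0$, so $\varphi(0)=0$, while $d(0,s)=\mu(s)>0$ whenever $\gamma$ acts nontrivially. More generally, since $t_\gamma(a)$ depends only on $a\setminus\gamma a$, one has $t_\gamma(a)=t_\gamma(a\setminus\gamma a)$ and so the term $\mu(a\wedge\gamma a)$ in your $\max$ is no constraint at all on the range of $t_\gamma$; hence the zeroset of $\varphi$ is the closure of $\{t_\gamma(a):a\in\pM\}$, which contains every element between $0$ and $s$ of this form, not just $s$. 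The step that goes wrong is your sentence ``on the other hand $x$ must dominate each $t_\gamma(a)$ up to the correction, and taking $a=a_0$ gives $s=t_\gamma(a_0)\subseteq x$'': an infimum over $a$ being zero tells you something about \emph{some} sequence $a_n$, not about the particular value $a=a_0$, so there is no reason $d(x,t_\gamma(a_0))$ should be small. For the same reason your fallback to the zeroset criterion also fails --- the zeroset is wrong, not just the modulus.

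The underlying issue is structural: a single $\inf_a$ can only produce a ``distance to the set $\{t_\gamma(a):a\}$'' type predicate, and that set is not a singleton. To pin down the single element $s$ you need quantifiers of both kinds. The paper does exactly this: it writes
\[
d(x,s)=\mu(x\setminus s)+\mu(s\setminus x),
\]
and, using $s=\bigvee_b t_\gamma(b)$ together with the attainment in Lemma~\ref{support equivalence}(2), expresses the two halves as $\mu(x\setminus s)=\inf_b\,\mu(x\setminus t_\gamma(b))$ and $\mu(s\setminus x)=\sup_b\,\mu(t_\gamma(b)\setminus x)$. The $\inf$ handles ``$x$ not sticking out beyond $s$'' and the $\sup$ handles ``$s$ not sticking out beyond $x$''; collapsing both into one $\inf$, as you did, loses the second half and is precisely why $x=0$ slips through.
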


\begin{proof}
We need to prove that the distance to $\mathrm{supp}\ \gamma$ is definable. By definition of the distance, we have $\forall a \in \pM,\ d(a,\mathrm{supp}\ \gamma) = \mu(a \setminus \mathrm{supp}\ \gamma) + \mu(\mathrm{supp}\ \gamma \setminus a)$.

On the one hand, $\mu(a \setminus \mathrm{supp}\ \gamma) = \inf_b\ \mu(a \setminus t_\gamma(b))$ so the first part is definable.

On the other hand, $\mu(\mathrm{supp}\ \gamma \setminus a) = \sup_b\ \mu(t_\gamma(b) \setminus a)$ and therefore the second part is definable as well.
\end{proof}

\begin{theorem} \label{irs is in theory}
Let $\pM_\alpha,\pM_\beta$ be two elementarily equivalent models of $\fA_{F_\infty}$. Then $\theta_\alpha = \theta_\beta$.
\end{theorem}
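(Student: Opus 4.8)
The plan is to show that elementary equivalence of $\pM_\alpha$ and $\pM_\beta$ pins down $\theta_\alpha$ and $\theta_\beta$ as measures on $\mathrm{Sub}(F_\infty)$. Recall from Lemma~\ref{action factor with same irs implies same stabilizers} (in the form used for the subbasis $(\rN_\gamma)_{\gamma \in F_\infty}$) that a measure on $\mathrm{Sub}(\Gamma)$ is determined by the values $\theta(\rN_\gamma) = \theta(\{\Lambda : \gamma \in \Lambda\})$ for $\gamma \in \Gamma$; more generally it is determined by the values $\theta(\bigcap_{\gamma \in S}\rN_\gamma \cap \bigcap_{\delta \in T}\rN_\delta^c)$ on finite $S,T \subseteq F_\infty$, since these generate the Borel $\sigma$-algebra. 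So it suffices to express each such number as a value taken by a formula evaluated in $\pM_\alpha$, and observe that the same formula evaluated in $\pM_\beta$ gives the corresponding number for $\theta_\beta$; elementary equivalence then forces equality.

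First I would identify $\theta_\alpha(\rN_\gamma)$ measure-theoretically. For a pmp action $F_\infty \act{\boldsymbol\alpha}(X,\mu)$ lifting $\alpha$, we have $\gamma \in \mathrm{Stab}^{\boldsymbol\alpha}(x)$ iff $\gamma^{\boldsymbol\alpha}x = x$, i.e. iff $x \notin \mathrm{Supp}\ \gamma^{\boldsymbol\alpha}$. Hence $\theta_\alpha(\rN_\gamma) = \mu(X \setminus \mathrm{Supp}\ \gamma^{\boldsymbol\alpha}) = 1 - \mu(\mathrm{supp}\ \gamma)$, where $\mathrm{supp}\ \gamma$ is computed in the measure algebra $\pM_\alpha$ using Definition~\ref{support} (and the identity $[\mathrm{Supp}\ f]_\mu = \mathrm{supp}\ \widetilde f$ noted in the text). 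More generally, $\theta_\alpha(\bigcap_{\gamma \in S}\rN_\gamma \setminus \bigcup_{\delta \in T}\rN_\delta)$ equals the measure of the set of points fixed by every $\gamma \in S$ and moved by every $\delta \in T$, which in the measure algebra is $\mu\big(\neg(\bigvee_{\gamma \in S}\mathrm{supp}\ \gamma) \wedge \bigwedge_{\delta \in T}(\text{the part of } \mathrm{supp}\ \delta \text{ not killed by the other supports})\big)$ — more carefully, it is $\mu\big(\neg\bigvee_{\gamma \in S}\mathrm{supp}\ \gamma\big) - \mu\big(\neg\bigvee_{\gamma \in S}\mathrm{supp}\ \gamma \wedge \neg\bigvee_{\delta \in T}\mathrm{supp}\ \delta\big)$ once one notes fixed-point sets are $F_\infty$-equivariant and intersects cleanly; actually the cleanest route is to write everything in terms of finitely many $\mathrm{supp}\ \gamma$'s and use inclusion–exclusion, since a finite Boolean combination of the clopen sets $\rN_\gamma$ corresponds to the same finite Boolean combination of the elements $1 - \mu$-measure of $\mathrm{supp}\ \gamma$.

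Next I would invoke Lemma~\ref{support definability}: for each $\gamma \in F_\infty$, $\mathrm{supp}\ \gamma$ is definable without parameters in $\fA_{F_\infty}$, so $\mu(\mathrm{supp}\ \gamma)$ is the value of a parameter-free sentence (namely $\sup_a\ \mu(t_\gamma(a))$, which equals $\mu(\bigvee_a t_\gamma(a)) = \mu(\mathrm{supp}\ \gamma)$ by the remark following Lemma~\ref{support equivalence} together with the fact that the family $\{t_\gamma(a)\}$ is upward directed in measure). Any finite Boolean combination of such quantities — hence in particular $\theta_\alpha$ evaluated on any finite Boolean combination of the $\rN_\gamma$ — is then itself the value of a parameter-free sentence $\varphi_{S,T}$ in $\fA_{F_\infty}$. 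Since $\pM_\alpha \equiv \pM_\beta$, we get $\varphi_{S,T}^{\pM_\alpha} = \varphi_{S,T}^{\pM_\beta}$, i.e. $\theta_\alpha\big(\bigcap_{\gamma\in S}\rN_\gamma\setminus\bigcup_{\delta\in T}\rN_\delta\big) = \theta_\beta\big(\bigcap_{\gamma\in S}\rN_\gamma\setminus\bigcup_{\delta\in T}\rN_\delta\big)$ for all finite $S,T$, and therefore $\theta_\alpha = \theta_\beta$.

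The main obstacle is the bookkeeping in the second step: relating the clopen sets $\rN_\gamma \subseteq \mathrm{Sub}(F_\infty)$ to the fixed-point sets $\{x : \gamma^{\boldsymbol\alpha}x = x\}$ and then to the complements of the $\mathrm{supp}\ \gamma$ in a way that behaves correctly under finite intersections — one must be slightly careful that $\{x : \gamma^{\boldsymbol\alpha}x = x\} \cap \{x : \delta^{\boldsymbol\alpha}x = x\}$ is the complement of $\mathrm{Supp}\ \gamma^{\boldsymbol\alpha} \cup \mathrm{Supp}\ \delta^{\boldsymbol\alpha}$, so the measure of a finite intersection $\bigcap_{\gamma\in S}\rN_\gamma$ is $1 - \mu\big(\bigvee_{\gamma\in S}\mathrm{supp}\ \gamma\big)$, and then a finite Boolean combination is handled by inclusion–exclusion over the generating family $\{\mathrm{supp}\ \gamma : \gamma \in S \cup T\}$. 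Everything else is a direct application of the already-proved definability lemma plus the observation that a measure on $\mathrm{Sub}(F_\infty)$ is determined by its values on the (countable, clopen) generating family.
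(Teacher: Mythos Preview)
Your proposal is correct and follows essentially the same approach as the paper: identify $\theta_\alpha$ on the clopen generators $\rN_\gamma$ (and their finite intersections) with measures of Boolean combinations of the elements $\mathrm{supp}\ \gamma$, invoke Lemma~\ref{support definability} to see these are values of parameter-free sentences, and conclude by elementary equivalence together with inclusion--exclusion. One small remark: your claim that the family $\{t_\gamma(a)\}_a$ is upward directed in measure is unnecessary (and not obviously true) --- Lemma~\ref{support equivalence} already produces a single $a_0$ with $t_\gamma(a_0) = \mathrm{supp}\ \gamma$, so the $\sup$ is attained, and likewise the paper uses that definable closure is a substructure to pass from definability of each $\mathrm{supp}\ \gamma$ to definability of their finite Boolean combinations.
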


\begin{proof}
As $\theta_\alpha$ and $\theta_\beta$ are measures on $\mathrm{Sub}(F_\infty)$, they are determined by their values on the sets $\rN_{F,G} = \{\Lambda \leq F_\infty : F \subseteq \Lambda, G \cap \Lambda = \varnothing\}$ where $F$ and $G$ are finite.

Note that $\theta_\alpha(\rN_{F,\varnothing}) = \mu(\underset{\gamma \in F}{\bigcap} \mathrm{Supp}\ \gamma^\alpha)$ and $\theta_\beta(\rN_{F,\varnothing}) = \mu(\underset{\gamma \in F}{\bigcap} \mathrm{Supp}\ \gamma^\beta)$, but by Lemma \ref{support equivalence} these supports are the same as those defined in the measure algebra. Furthermore, by Lemma \ref{support definability}, for each $\gamma \in F_\infty$, $\mathrm{supp}\ \gamma$ is definable over $\varnothing$ in the theory $F_\infty$, and since the definable closure is a substructure, then $\underset{\gamma \in F}{\bigwedge} \mathrm{supp}\ \gamma$ must be definable over $\varnothing$ as well. Thus by elementary equivalence, for every finite $F \subseteq F_\infty$, we have $\theta_\alpha(\rN_{F,\varnothing}) = \theta_\beta(\rN_{F,\varnothing})$.

Now for $F,G$ finite subsets of $F_\infty$, write $\rN_{F,G} = \rN_{F,\varnothing} \setminus \underset{\gamma \in G}{\bigcup} \rN_{F \cup \{\gamma\},\varnothing}$. By the inclusion-exclusion principle, we then get
\begin{eqnarray*}
\theta_\alpha(\rN_{F,G}) &=& \theta_\alpha(\rN_{F,\varnothing}) + \sum_{i=1}^{|G|} (-1)^i \underset{\{J \subseteq G\ :\ |J|=i\}}{\sum} \theta_\alpha(\rN_{F \cup J,\varnothing}) \\
                       &=& \theta_\beta(\rN_{F,\varnothing}) +
\sum_{i=1}^{|G|} (-1)^i \underset{\{J \subseteq G\ :\
|J|=i\}}{\sum} \theta_\beta(\rN_{F \cup J,\varnothing}) \\
                       &=& \theta_\beta(\rN_{F,G}).
\end{eqnarray*}
\end{proof}

For $\theta$ an IRS, let $\fA_\theta$ be the $\pL_\infty$-theory consisting of:
\begin{itemize}
    \item The axioms of $\fA_{F_\infty}$.
    \item For $F \subseteq F_\infty$ finite, the axiom $\sup_{\{a_\gamma : \gamma \in F\}}\ \mu(\underset{\gamma \in F}{\bigwedge} t_\gamma(a_\gamma)) = \theta(\rN_{F,\varnothing})$.
\end{itemize}

Then the models of $\fA_\theta$ are exactly the measure-preserving actions of $F_\infty$ which have IRS $\theta$.

\subsection{Completeness and Model Completeness}

\begin{definition}
Let $(X,\mu)$ be a standard probability space and $\Gamma$ be a countable group.

First, let $\mathrm{Aut}(X,\mu)$ be the space of automorphisms of $\mathrm{MAlg}(X,\mu)$. We equip it with a complete metric $d_u$ called the \textbf{uniform metric} and defined by the formula $d_u(\varphi,\psi) \coloneqq \sup_{a \in \mathrm{MAlg}(X,\mu)} d_\mu(\varphi a,\psi a)$. We call the topology induced the \textbf{uniform topology}.

Then we define the space $A(\Gamma,X,\mu)$ of pmp actions of $\Gamma$ on $(X,\mu)$ naturally as a subspace of $\mathrm{Aut}(X,\mu)^\Gamma$. The uniform topology on $\mathrm{Aut}(X,\mu)$ gives rise to a product topology on $\mathrm{Aut}(X,\mu)^\Gamma$ which is completely metrizable and for which $A(\Gamma,X,\mu)$ is closed. Again, we call this topology the \textbf{uniform topology} on $A(\Gamma,X,\mu)$.
\end{definition}

From now on, fix a complete metric $d_u$ compatible with the uniform topology on $A(F_\infty,X,\mu)$.

\begin{theorem} \label{continuity of formulas}
Let $\varphi(\bar{x},\bar{y})$ be an $\pL_\infty$-formula, where $|\bar{x}| =n$, $|\bar{y}| =m$, let $(X,\mu)$ be a standard probability space and let $\bar{p} \in \mathrm{MAlg}(X,\mu)^m$.

Then the map $\begin{array}{ccccc}
\left( A(F_\infty,X,\mu), d_u \right) & \longrightarrow & \left( l^\infty (\mathrm{MAlg}(X,\mu)^n),\Vert\ \Vert_\infty \right) \\
\alpha & \longmapsto & \left( \varphi^{\pM_\alpha}(\bar{a},\bar{p}) \right)_{\bar{a} \in \mathrm{MAlg}(X,\mu)^n}
\end{array}$ is uniformly continuous.
\end{theorem}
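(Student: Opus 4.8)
The plan is to prove uniform continuity by induction on the structure of the formula $\varphi$. The base case is atomic formulas, and the inductive steps handle connectives, the function symbols (elements of $F_\infty$), and quantifiers (sups and infs over the measure algebra). The key quantitative input is that the uniform metric $d_u$ controls how much the action moves any given element: if $d_u(\alpha,\beta) < \delta$ then $d_\mu(\gamma^{\pM_\alpha} a, \gamma^{\pM_\beta} a) < \delta$ for every single generator $\gamma$, and by the triangle inequality and the fact that the Boolean operations are $1$-Lipschitz in each argument, $d_\mu(w^{\pM_\alpha} a, w^{\pM_\beta} a) \leq |w|\,\delta$ for any word $w$ in $F_\infty$ of length $|w|$.

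First I would set up the induction. For a term $t(\bar{x})$ of $\pL_\infty$ (a finite composition of Boolean operations and elements of $F_\infty$ applied to the variables $\bar x$ and the fixed parameters $\bar p$), one shows by induction on $t$ that there is a constant $L_t$ (depending on $t$ but not on $\alpha$ or $\bar a$) such that $d_\mu\big(t^{\pM_\alpha}(\bar a,\bar p),\, t^{\pM_\beta}(\bar a,\bar p)\big) \leq L_t\, d_u(\alpha,\beta)$ for all $\bar a$, uniformly; moreover each $t^{\pM_\alpha}(\cdot,\bar p)$ is $1$-Lipschitz in $\bar a$. Indeed, the Boolean operations contribute Lipschitz constant $1$ and do not see $\alpha$ at all, and each application of a generator $\gamma$ adds $d_u(\alpha,\beta)$ to the running estimate while remaining an isometry in the $\bar a$ variable. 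This handles atomic formulas $\mu(t(\bar x))$ and $d(t_1(\bar x),t_2(\bar x))$: the map $\alpha \mapsto (\varphi^{\pM_\alpha}(\bar a,\bar p))_{\bar a}$ is then Lipschitz in the $\ell^\infty$ norm, with constant $\sum L_{t_i}$, hence uniformly continuous.

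For the inductive step on connectives, I would use that all connectives of continuous logic are uniformly continuous functions $[0,1]^k \to [0,1]$, so composing with them preserves uniform continuity of the $\ell^\infty$-valued map (the sup over $\bar a$ commutes with a uniformly continuous postcomposition in the appropriate sense). For quantifiers, suppose $\psi(\bar x,\bar y) = \sup_{z}\, \chi(\bar x, z, \bar y)$ and the result holds for $\chi$. Given $\eta>0$, pick $\delta$ from the inductive hypothesis for $\chi$ so that $d_u(\alpha,\beta)<\delta$ implies $\sup_{\bar a, c}|\chi^{\pM_\alpha}(\bar a,c,\bar p) - \chi^{\pM_\beta}(\bar a,c,\bar p)| < \eta$; then for each fixed $\bar a$, taking sups over $c$ on both sides and using $|\sup_c u_c - \sup_c v_c| \leq \sup_c |u_c - v_c|$ gives $|\psi^{\pM_\alpha}(\bar a,\bar p) - \psi^{\pM_\beta}(\bar a,\bar p)| < \eta$, uniformly in $\bar a$. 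The case of $\inf$ is identical. This closes the induction.

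The main obstacle, and the point worth stating carefully, is getting the estimates \emph{uniform in $\bar a$} (and in the bound variable of a quantifier) rather than merely pointwise: this is exactly why one works with the $\ell^\infty$-valued map from the start, and why the term-level estimate must be proved as a Lipschitz bound that is independent of the argument $\bar a$. Everything else is routine bookkeeping, but I would be slightly careful that the parameters $\bar p$ are genuinely fixed and do not interfere — they only ever appear plugged into terms, so the term-level Lipschitz estimate absorbs them harmlessly into the constant $L_t$.
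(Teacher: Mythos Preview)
Your proof is correct and follows essentially the same inductive strategy as the paper: atomic formulas via term estimates, connectives via their uniform continuity, and quantifiers via the inequality $|\sup_c u_c - \sup_c v_c| \leq \sup_c|u_c - v_c|$ (which the paper compresses to ``immediate, since we use the norm $\Vert\ \Vert_\infty$''). One small caveat: the paper only fixes \emph{some} complete metric $d_u$ compatible with the product uniform topology on $A(F_\infty,X,\mu)$, so your Lipschitz claims (e.g.\ that $d_u(\alpha,\beta)<\delta$ forces $d_\mu(\gamma^{\pM_\alpha}a,\gamma^{\pM_\beta}a) < \delta$ for every generator $\gamma$) need not hold literally for an arbitrary such metric---replacing ``Lipschitz with constant $L_t$'' by ``uniformly continuous, uniformly in $\bar a$'' throughout fixes this without altering the structure of the argument.
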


\begin{proof}
We prove this result by induction on formulas. For now assume that the theorem holds for atomic formulas. First remark that if the theorem holds for certain formulas, then it holds for any combination of these formulas constructed with the help of connectives, by using their uniform continuity. Then it suffices to treat the case of quantifiers to conclude. But it is immediate, since we use the norm $\Vert\ \Vert_\infty$.
\\

Let us now prove the theorem for atomic formulas. If $\varphi(\bar{x},\bar{y})$ is an atomic formula, then it is equivalent to a formula of the form $\varphi(\bar{x},\bar{y}) \coloneqq \mu(t(\gamma_1\bar{x},\dots, \gamma_l \bar{x},\gamma_1\bar{y},\dots,\gamma_l \bar{y})$ for an $\pL$-term $t$ and some $\gamma_1,\dots,\gamma_l \in F_\infty$. Let $\varepsilon >0$.

By definition of the terms, they are uniformly continuous and so there is $\delta >0$ such that for $\bar{z}$ and $\bar{z'} \in \mathrm{MAlg}(X,\mu)^{(n+m)l}$, if $d_\mu(\bar{z},\bar{z'}) < \delta$ then $d_\mu(t(\bar{z}),t(\bar{z'})) < \varepsilon$.

Now if $\alpha,\beta \in A(F_\infty,X,\mu)$ are sufficiently $d_u$-close, then for every $a \in \mathrm{MAlg}(X,\mu)$ and $1 \leq i \leq l$, $d_\mu(\gamma_i^\alpha a,\gamma_i^\beta a) < \delta$. It follows that for all $\bar{a} \in \mathrm{MAlg}(X,\mu)^n$,
$$\left| \varphi^{\pM_\alpha}(\bar{a},\bar{p}) - \varphi^{\pM_\beta}(\bar{a},\bar{p}) \right| \leq d_\mu \left( t(\gamma_1^\alpha \bar{a},\dots,\gamma_l^\alpha \bar{a},\gamma_1^\alpha \bar{p},\dots,\gamma_l^\alpha \bar{p}), t(\gamma_1^\beta \bar{a},\dots,\gamma_l^\beta \bar{a},\gamma_1^\beta \bar{p},\dots,\gamma_l^\beta \bar{p}) \right) < \varepsilon,$$
which finishes the proof.
\end{proof}

\begin{theorem} \label{model completeness}
Let $\theta$ be a hyperfinite IRS on $F_\infty$. Then the theory ${\fA}_\theta$ is model complete.
\end{theorem}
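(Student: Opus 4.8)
The plan is to use the Robinson-type test for model completeness in continuous logic: $\fA_\theta$ is model complete if and only if every embedding $\pM_\alpha \hookrightarrow \pM_\beta$ between models of $\fA_\theta$ is elementary. Since $\fA_\theta$ is $\forall\exists$-axiomatizable relative to $\AMA$ (the only non-universal axioms are the surjectivity axiom $\sup_a\inf_b d(a,\gamma b)=0$ and the IRS axioms $\sup_{\{a_\gamma\}}\mu(\bigwedge t_\gamma(a_\gamma))=\theta(\rN_{F,\varnothing})$, which are themselves $\forall\exists$), it suffices to check that embeddings are elementary. Equivalently, by the usual criterion, it suffices to show that whenever $\pM_\alpha$ is a substructure of $\pM_\beta$, then $\pM_\alpha$ is existentially closed in $\pM_\beta$; and in fact it is cleanest to prove directly that any two models of $\fA_\theta$ are elementarily equivalent in a strong enough sense, then combine with Theorem~\ref{approximate conjugacy for pmp actions with a given hyperfinite irs}.

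The key steps, in order, are as follows. First, reduce to separable models: an embedding of arbitrary models restricts, via downward Löwenheim--Skolem applied to a countable fragment together with the relevant parameters, to an embedding of separable elementary substructures, so it suffices to show embeddings between separable models are elementary. Second, invoke Propositions~\ref{measure algebras arise from probability spaces} and~\ref{liftings}: a separable model of $\fA_\theta$ is $\pM_\alpha$ for a pmp action $\alpha$ of $F_\infty$ on a standard probability space with $\theta_\alpha=\theta$, and such an action is hyperfinite since $\theta$ is a hyperfinite IRS. Third --- the heart of the argument --- given an embedding $\pM_\alpha\hookrightarrow\pM_\beta$, realize it as coming from an action factor map: the embedding corresponds (after lifting) to a pmp map $\pi$ exhibiting $\alpha$ as a factor of $\beta$, and since $\theta_\alpha=\theta_\beta=\theta$, Lemma~\ref{action factor with same irs implies same stabilizers} applies. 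Fourth, apply Theorem~\ref{approximate parametrized conjugacy for factor actions} with Borel parameters chosen to be (liftings of) finitely many elements $\bar a$ of $\pM_\alpha$: for every finite $F\subseteq F_\infty$ and $\varepsilon>0$ we obtain a pmp bijection $\rho\colon X\to Y$ fixing those parameters and $\varepsilon$-approximately intertwining $\alpha$ and $\beta$ on $F$. Fifth, use Theorem~\ref{continuity of formulas}: for a fixed quantifier-free formula $\varphi(\bar x,\bar y)$, the value $\varphi^{\pM_\gamma}(\bar a,\bar p)$ depends uniformly continuously on $\gamma\in A(F_\infty,X,\mu)$; conjugating $\beta$ by $\rho$ gives an action $d_u$-close to $\beta$ which agrees with (the image of) $\alpha$ on the parameters, so $\inf$'s and $\sup$'s over $\bar a$ of $\varphi$ are preserved up to $\varepsilon$, and letting $\varepsilon\to 0$ shows $\pi$ preserves all $\exists$- and $\forall$-formulas, i.e. the embedding is elementary.

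The main obstacle I expect is step three together with the interplay of steps four and five: one must be careful that the Borel parameters fed to Theorem~\ref{approximate parametrized conjugacy for factor actions} are genuine preimages under $\pi$ (so that the hypothesis $\pi^{-1}(A_i)=B_i$ holds), which forces the parameters on the $\pM_\alpha$ side and their $\pi$-pullbacks on the $\pM_\beta$ side to correspond exactly under the embedding --- this is precisely what the substructure hypothesis provides, but it needs to be spelled out at the level of measure algebras versus point realizations, using that liftings intertwine $\widetilde{(\cdot)}$ appropriately. A secondary subtlety is that Theorem~\ref{approximate parametrized conjugacy for factor actions} produces $\rho\colon X\to Y$ rather than an automorphism of a single space; one handles this by transporting the structure of $\pM_\beta$ along $\rho$ to an isomorphic copy on $(X,\mu)$, so that everything lives in $A(F_\infty,X,\mu)$ and Theorem~\ref{continuity of formulas} is directly applicable. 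Once these bookkeeping points are settled, the limiting argument is routine. As a corollary, completeness of $\fA_\theta$ follows (any two separable models embed into a common model via the joining construction of Section~\ref{amalgamation of actions}, or directly since $\fA_\theta$ has, up to the above, a unique separable model up to approximate conjugacy).
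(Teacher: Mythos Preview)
Your proposal is correct and follows essentially the same route as the paper: reduce to separable models via L\"owenheim--Skolem, lift the embedding to a factor map $\pi$ between pmp actions with the same IRS, apply Theorem~\ref{approximate parametrized conjugacy for factor actions} with the finitely many parameters as Borel sets, transport $\beta$ along $\rho$ to an action on $(X,\mu)$, and invoke Theorem~\ref{continuity of formulas}. The only cosmetic difference is that the paper applies Theorem~\ref{continuity of formulas} directly to an \emph{arbitrary} formula $\varphi$ (the theorem already covers all quantifier depths), so there is no need to restrict to quantifier-free $\varphi$ and argue only for one layer of $\inf$/$\sup$; this yields full elementarity in one stroke rather than passing through an existential-closure criterion.
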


\begin{proof}
It suffices to show that any inclusion of two separable models is elementary. Indeed, suppose this result and take any $\pM \subseteq \pN \models \fA_\theta$, $\varphi(\bar{x})$ a $\pL_\infty$-formula and $\bar{p} \in \pM$ finite. By the L\"owenheim-Skolem theorem, find a separable $\pM' \preceq \pM$ containing $\bar{p}$. Again by the L\"owenheim-Skolem theorem, find a separable $\pN' \preceq \pN$ containing the separable structure $\pM'$. Using the hypothesis, $\pM' \preceq \pN'$ so we finally get
$$\varphi(\bar{p})^\pM = \varphi(\bar{p})^{\pM'} = \varphi(\bar{p})^{\pN'} = \varphi(\bar{p})^\pN.$$

Let $\pM \subseteq \pN$ be two separable models of $\fA_\theta$. Consider a $\pL_\infty$-formula $\varphi(\bar{x})$ with $k$ variables and $\bar{p} \in \mathrm{MAlg}(X,\mu)^k$.

A classical argument derived from Proposition \ref{liftings} allows us to chose two pmp actions $F_\infty \act{\alpha} (X,\mu)$ and $F_\infty \act{\beta} (Y,\nu)$ on standard probability spaces along with a pmp map $\pi : Y \rightarrow X$, such that $\pM \simeq \pM_\alpha$, $\pN \simeq \pM_\beta$, and $\pi$ is a lifting of the inclusion $\mathrm{MAlg}(X,\mu) \hookrightarrow \mathrm{MAlg}(Y,\nu)$, which is equivariant respectively to the actions $\alpha$ and $\beta$. For $1 \leq i \leq k$, let $A_i \subseteq X$ be a Borel representative of $p_i$ and let $B_i = \pi^{-1}(A_i)$, which is also a Borel representative of $p_i$, in $Y$.

Then by Theorem \ref{approximate parametrized conjugacy for factor actions}, $\alpha$ is in the uniform closure of the set
$$\mC(\beta) \coloneqq \{\rho^{-1} \beta \rho : \rho\ \mbox{is a pmp bijection}\ X \rightarrow Y\ \mbox{such that}\ \forall i \leq k, \rho^{-1}(A_i) = B_i\}.$$
But then Theorem \ref{continuity of formulas} implies that $\varphi^{\pM_\alpha}(\bar{p}) \in \overline{\{\varphi^{\pM_{\beta'}}(\bar{p}) : \beta' \in \mC(\beta)\}}$.
Furthermore, for any $\beta' \in \mC(\beta)$, we have $(\beta',\bar{A}) \simeq (\beta,\bar{B})$, so that $(\pM_{\beta'},\bar{p}) \equiv (\pM_\beta,\bar{p})$ and consequently $\varphi^{\pM_{\beta'}}(\bar{p}) = \varphi^{\pM_{\beta}}(\bar{p})$. This establishes that $\varphi^{\pM_\alpha}(\bar{p}) = \varphi^{\pM_\beta}(\bar{p})$.

Hence $\pM_\alpha \preceq \pM_\beta$ and therefore $\fA_\theta$ is model complete.
\end{proof}

Now for completeness we combine model completeness with the argument of amalgamation already seen in Section \ref{amalgamation of actions}.

\begin{theorem}
Let $\theta$ be a hyperfinite IRS on $F_\infty$. Then the theory ${\fA}_\theta$ is complete.
\end{theorem}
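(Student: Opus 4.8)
The plan is to deduce completeness of $\fA_\theta$ from its model completeness (Theorem \ref{model completeness}) together with the amalgamation construction from Section \ref{amalgamation of actions}. Recall that a model complete theory all of whose models embed into a common model is complete; so it suffices to show that any two separable models of $\fA_\theta$ jointly embed into a third model of $\fA_\theta$. By the Löwenheim--Skolem theorem it is enough to treat separable models, and by Propositions \ref{measure algebras arise from probability spaces} and \ref{liftings} every separable model of $\fA_\theta$ is of the form $\pM_\gamma$ for a pmp action $\gamma$ on a standard probability space with IRS $\theta$.

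So first I would take two separable models $\pM_\alpha$ and $\pM_\beta$ of $\fA_\theta$, coming from pmp actions $F_\infty \act{\alpha} (X,\mu)$ and $F_\infty \act{\beta} (Y,\nu)$, both of IRS $\theta$ (since the axioms of $\fA_\theta$ pin down the IRS, via the formulas $\sup_{\{a_\gamma\}} \mu(\bigwedge_{\gamma \in F} t_\gamma(a_\gamma))$). Next I would form the joining $\Gamma \act{\zeta} (Z,\eta)$ of $\alpha$ and $\beta$ over $\boldsymbol{\theta}$ as in Lemma \ref{joining over an irs}, which has IRS $\theta$ and admits both $\alpha$ and $\beta$ as factors via the coordinate projections $p_1, p_2$. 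Since $\theta$ is hyperfinite, $\zeta$ is hyperfinite too (by G.~Elek's theorem that hyperfiniteness depends only on the IRS), hence $\pM_\zeta \models \fA_\theta$. Then I would observe that a factor map of actions induces, on the level of measure algebras, an embedding of $\pL_\infty$-structures: if $p \colon Z \to X$ is an action factor map then $\widetilde{p} \colon \mathrm{MAlg}(X,\mu) \to \mathrm{MAlg}(Z,\eta)$ is an injective measure algebra morphism commuting with the $F_\infty$-actions, so $\pM_\alpha$ embeds into $\pM_\zeta$, and likewise $\pM_\beta$ embeds into $\pM_\zeta$.

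Having exhibited the common extension $\pM_\zeta$, model completeness finishes the argument: the embeddings $\pM_\alpha \hookrightarrow \pM_\zeta$ and $\pM_\beta \hookrightarrow \pM_\zeta$ are elementary by Theorem \ref{model completeness}, so $\pM_\alpha \equiv \pM_\zeta \equiv \pM_\beta$. Since every separable model of $\fA_\theta$ is of this form and every model is elementarily equivalent to a separable elementary submodel (Löwenheim--Skolem again), all models of $\fA_\theta$ are elementarily equivalent, i.e.\ $\fA_\theta$ is complete.

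The only slightly delicate point is checking that $\zeta$ really is hyperfinite and that $\pM_\zeta$ genuinely satisfies $\fA_\theta$ --- i.e.\ that its IRS is exactly $\theta$, which is precisely the content of Lemma \ref{joining over an irs}, combined with Elek's result relating hyperfiniteness to the IRS, both already available. Everything else is bookkeeping: that factor maps give rise to elementary embeddings once model completeness is known, and that completeness follows formally from ``model complete plus joint embedding''. An alternative, essentially equivalent route would bypass the abstract criterion and instead directly mimic the end of the proof of Theorem \ref{approximate conjugacy for pmp actions with a given hyperfinite irs}: apply Theorem \ref{approximate parametrized conjugacy for factor actions} (with no Borel parameters) to the two factor maps $X \leftarrow Z \rightarrow Y$ to get that $\alpha$ and $\beta$ are both uniform limits of conjugates of $\zeta$, then invoke Theorem \ref{continuity of formulas} to conclude $\varphi^{\pM_\alpha} = \varphi^{\pM_\zeta} = \varphi^{\pM_\beta}$ for every sentence $\varphi$. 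I would present the first, shorter route.
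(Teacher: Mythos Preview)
Your proposal is correct and follows essentially the same route as the paper: reduce to separable models, form the relative independent joining $\zeta = \alpha \underset{\boldsymbol{\theta}}{\times} \beta$, use Lemma \ref{joining over an irs} to get $\pM_\zeta \models \fA_\theta$, and then invoke model completeness to conclude $\pM_\alpha \equiv \pM_\zeta \equiv \pM_\beta$. One minor remark: the hyperfiniteness of $\zeta$ is not what makes $\pM_\zeta \models \fA_\theta$ (the axioms of $\fA_\theta$ only pin down the IRS, not hyperfiniteness), so that step is redundant---Lemma \ref{joining over an irs} alone suffices there, and the hyperfiniteness of $\theta$ enters only through Theorem \ref{model completeness}.
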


\begin{proof}
As usual, it is sufficient to prove that two separable models of $\fA_\theta$ are elementarily equivalent.

Let $\pM_\alpha, \pM_\beta \models \fA_\theta$ be two separable models and consider the action $\zeta \coloneqq \alpha \underset{\boldsymbol{\theta}}{\times} \beta$. By Lemma \ref{joining over an irs}, we have $\pM_\zeta \models \fA_\theta$ and moreover, both $\pM_\alpha$ and $\pM_\beta$ are substructures of $\pM_\zeta$.

Now since $\fA_\theta$ is model complete, we have $\pM_\alpha \preceq \pM_\zeta$ and $\pM_\beta \preceq \pM_\zeta$, so $\pM_\alpha \equiv \pM_\zeta \equiv \pM_\beta$.
\end{proof}

\subsection{Elimination of quantifiers}

\begin{proposition}[{\cite[Prop. 13.16]{yaacovModelTheoryMetric2008}}] \label{characterization of quantifier elimination}
Let $T$ be a countable theory. Then $T$ admits quantifier  elimination if and only if for any $\pM,\pN \models T$, any substructure $\pA \subseteq \pM$ and any embedding $f \colon \pZ \hookrightarrow \pN$, there is an elementary extension $\pN'$ of $\pN$ and an embedding $\tilde{f} \colon \pM \hookrightarrow \pN'$ extending $f$.
\end{proposition}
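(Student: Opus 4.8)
The plan is to prove both implications by the classical model-theoretic argument, transposed to continuous logic, using the well-known reformulation of quantifier elimination: $T$ eliminates quantifiers if and only if any two tuples $\bar a \in \pM \models T$ and $\bar b \in \pN \models T$ having the same quantifier-free type over $\varnothing$ have the same complete type over $\varnothing$. Two facts will be used repeatedly: an embedding preserves the exact values of all quantifier-free formulas, and an $\inf$-formula can only decrease along an embedding. I write $\pA$ for the domain of $f$ in the statement.

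For the ``only if'' direction, assume $T$ has quantifier elimination and let $\pA \subseteq \pM$ and $f \colon \pA \hookrightarrow \pN$ be given. I would work in the language enriched by a constant for each element of $\pM$ and each element of $\pN$, with the constant naming $a \in \pA$ identified with the constant naming $f(a) \in \pN$, and consider the set of conditions $\Sigma = \mathrm{ElDiag}(\pN) \cup \mathrm{qfDiag}(\pM)$. A model of $\Sigma$ restricts on the $\pN$-constants to an elementary extension $\pN' \succeq \pN$ (which models $T$) and interprets the $\pM$-constants so as to give an embedding $\tilde f \colon \pM \hookrightarrow \pN'$ extending $f$, so it suffices to show $\Sigma$ is satisfiable. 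By the (approximate) compactness theorem this reduces to showing that every finite subset is approximately satisfiable; such a subset imposes, beyond the automatically satisfied conditions of $\mathrm{ElDiag}(\pN)$, finitely many quantifier-free conditions $\chi_l(\bar m, \bar a) = s_l$ with $s_l = \chi_l^\pM(\bar m, \bar a)$, and one must produce a tuple $\bar c$ in $\pN$ with $\chi_l^\pN(\bar c, f(\bar a))$ close to $s_l$ for all $l$. This is exactly where quantifier elimination enters: the formula $\Theta(\bar y) := \inf_{\bar x} \max_l |\chi_l(\bar x, \bar y) - s_l|$ satisfies $\Theta^\pM(\bar a) = 0$; by quantifier elimination it is uniformly $\varepsilon$-approximated on models of $T$ by a quantifier-free $\Theta'$; since $f$ is an embedding, $\Theta'^\pN(f(\bar a)) = \Theta'^\pM(\bar a)$, whence $\Theta^\pN(f(\bar a)) \le 2\varepsilon$, which yields the desired $\bar c$ up to $3\varepsilon$. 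Feeding these approximate solutions back into compactness gives satisfiability of $\Sigma$.

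For the ``if'' direction, assume the embedding-extension property and let $\bar a \in \pM \models T$ and $\bar b \in \pN \models T$ share their quantifier-free type over $\varnothing$. Then $\bar a \mapsto \bar b$ extends to an isomorphism of the generated substructures $\langle \bar a \rangle \subseteq \pM$ and $\langle \bar b \rangle \subseteq \pN$, which we read as an embedding $f \colon \langle \bar a \rangle \hookrightarrow \pN$. I would then run a back-and-forth: apply the hypothesis to $f$ to obtain $\pN \preceq \pN_1$ and $f_1 \colon \pM \hookrightarrow \pN_1$ extending $f$; apply it again to $f_1^{-1} \colon f_1(\pM) \hookrightarrow \pM$ to obtain $\pM \preceq \pM_1$ and $g_1 \colon \pN_1 \hookrightarrow \pM_1$ with $g_1 \circ f_1 = \mathrm{incl}_{\pM \subseteq \pM_1}$; and continue alternately, producing elementary chains $\pM \preceq \pM_1 \preceq \cdots$ and $\pN \preceq \pN_1 \preceq \cdots$ linked by compatible embeddings $\pM_k \hookrightarrow \pN_{k+1}$ and $\pN_k \hookrightarrow \pM_{k+1}$ whose composites are the chain inclusions. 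The colimit $\pP$ of this system is a common extension of $\pM$ and $\pN$ into which both structures map elementarily (each is the union of an elementary chain, so Tarski--Vaught applies), and by construction the images of $\bar a$ and $\bar b$ in $\pP$ coincide; hence $\varphi^\pM(\bar a) = \varphi^\pP(\cdot) = \varphi^\pN(\bar b)$ for every formula $\varphi$, i.e. $\bar a$ and $\bar b$ have the same complete type. By the reformulation above, $T$ eliminates quantifiers.

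I expect the only real work to be bookkeeping in two places: in the ``only if'' direction, carrying the $\varepsilon$'s correctly through the approximate compactness theorem while invoking the uniform quantifier-free approximation of $\Theta$; and in the ``if'' direction, arranging the back-and-forth so that the transition embeddings genuinely commute and the colimit is verifiably an elementary extension of both $\pM$ and $\pN$ with $\bar a$ and $\bar b$ identified. No new idea beyond the discrete-logic proof is needed, but the continuous setting forces one to use approximate conditions and $\inf/\sup$ in place of $\exists/\forall$ throughout.
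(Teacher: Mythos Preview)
The paper does not prove this proposition: it merely cites it from \cite[Prop.~13.16]{yaacovModelTheoryMetric2008} and uses it as a black box. Your proof is the standard argument transposed to continuous logic and is correct as sketched; both directions are handled properly, including the $\varepsilon$-bookkeeping in the compactness argument and the back-and-forth construction of a common elementary extension identifying $\bar a$ and $\bar b$.
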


\begin{definition} \label{amalgamation}
We say that a theory $T$ admits \textbf{amalgamation} if for any $\pM_1,\pM_2 \models T$ and any common substructure $\pZ$, there is a joining of $\pM_1$ and $\pM_2$ over $\pZ$, that is a structure $\pN \models T$ and embeddings $\pM_i \hookrightarrow \pN$ ($i=1,2$) such that the following diagram commutes:

\begin{center}
\begin{tikzcd}
& \pN & \\
\pM_1 \ar[ru,hook] & & \ar[lu,hook'] \pM_2 \\
& \ar[lu,hook'] \pZ \ar[ru,hook] &
\end{tikzcd}
\end{center}
\end{definition}

The next lemma is a classical result in discrete model theory and it easily extends to continuous model theory.

\begin{lemma} \label{model completeness + amalgamation = quantifier elimination}
Let $T$ be a theory. Then $T$ admits quantifier elimination if and only if it admits amalgamation and is model complete.
\end{lemma}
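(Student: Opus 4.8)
The plan is to prove the two implications separately, using Proposition \ref{characterization of quantifier elimination} as the bridge.

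\textbf{($\Leftarrow$) Amalgamation and model completeness imply quantifier elimination.} I would verify the criterion of Proposition \ref{characterization of quantifier elimination}. So fix $\pM,\pN \models T$, a substructure $\pZ \subseteq \pM$, and an embedding $f \colon \pZ \hookrightarrow \pN$. Identifying $\pZ$ with its image $f(\pZ) \subseteq \pN$, we may regard $\pZ$ as a common substructure of $\pM$ and $\pN$. By amalgamation, there is $\pN' \models T$ together with embeddings $g \colon \pM \hookrightarrow \pN'$ and $h \colon \pN \hookrightarrow \pN'$ whose restrictions to $\pZ$ agree (so $h \circ f = g \restriction_\pZ$, once we keep track of $f$ honestly rather than suppressing it). Now model completeness says $h \colon \pN \hookrightarrow \pN'$ is an elementary embedding, so $\pN'$ is (isomorphic over $\pN$ to) an elementary extension of $\pN$; and $g \colon \pM \hookrightarrow \pN'$ extends $f$ in the required sense. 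Hence the hypothesis of Proposition \ref{characterization of quantifier elimination} is met and $T$ admits quantifier elimination. The only care needed here is bookkeeping with the identifications so that ``$\tilde f$ extends $f$'' is literally true; there is no real mathematical obstacle.

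\textbf{($\Rightarrow$) Quantifier elimination implies model completeness and amalgamation.} Model completeness is immediate: if $\pM \subseteq \pN \models T$ and $T$ has quantifier elimination, then for any formula $\varphi(\bar x)$ there is a quantifier-free $\psi(\bar x)$ with $T \models \sup_{\bar x}|\varphi(\bar x) - \psi(\bar x)| = 0$, and quantifier-free formulas are preserved under substructure in both directions, so $\varphi^\pM(\bar p) = \psi^\pM(\bar p) = \psi^\pN(\bar p) = \varphi^\pN(\bar p)$ for $\bar p \in \pM$; thus $\pM \preceq \pN$. For amalgamation, take $\pM_1, \pM_2 \models T$ with common substructure $\pZ$. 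Apply Proposition \ref{characterization of quantifier elimination} with $\pM = \pM_1$, $\pN = \pM_2$, $\pA = \pZ \subseteq \pM_1$ and $f \colon \pZ \hookrightarrow \pM_2$ the inclusion: it yields an elementary extension $\pN'$ of $\pM_2$ and an embedding $\tilde f \colon \pM_1 \hookrightarrow \pN'$ extending $f$. Then $\pN := \pN'$ together with $\tilde f \colon \pM_1 \hookrightarrow \pN$ and the composite $\pM_2 \preceq \pN' = \pN$ forms the desired amalgam: $\pN \models T$ since $\pN'$ is an elementary extension of $\pM_2 \models T$, and the square commutes on $\pZ$ because $\tilde f$ extends $f$ while $\pM_2 \hookrightarrow \pN$ is the elementary inclusion, both restricting to the identity on $\pZ$.

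I expect the ($\Leftarrow$) direction — recovering quantifier elimination from amalgamation plus model completeness — to be the more delicate one to write cleanly, purely because of the need to track the embedding $f$ through the amalgam and confirm the extension clause of Proposition \ref{characterization of quantifier elimination}; mathematically, though, both directions are routine manipulations of the given criterion, and no serious obstacle arises. One should also note at the outset that $T$ is assumed countable so that Proposition \ref{characterization of quantifier elimination} applies.
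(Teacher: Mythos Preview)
Your proposal is correct and follows essentially the same route as the paper: both directions are reduced to Proposition \ref{characterization of quantifier elimination}, with the $(\Leftarrow)$ direction handled by amalgamating over $\pZ$ and then invoking model completeness to make the embedding of $\pN$ into the amalgam elementary, and the $(\Rightarrow)$ direction by applying the criterion directly for amalgamation and using that quantifier-free formulas are absolute for model completeness. Your remark that countability of $T$ is needed for Proposition \ref{characterization of quantifier elimination} to apply is a fair point; the paper leaves this implicit.
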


\begin{proof}
Suppose that $T$ admits quantifier elimination. Let $\pM_1,\pM_2 \models T$ with a common substructure $\pZ$, applying Proposition \ref{characterization of quantifier elimination} where $f$ is the inclusion $\pZ \hookrightarrow \pM_2$, we get $\pN$ as required.

Now let $\pM \subseteq \pN$ be two models of $T$. By quantifier elimination, we only need to prove that $\pM \models \varphi(\bar a) \Leftrightarrow \pN \models \varphi(\bar a)$ for atomic formulas $\varphi$ and finite tuples $\bar a$ of parameters in $\pM$. But this is trivial by the definition of inclusion for models.
\\

Conversely, suppose $T$ admits amalgamation and is model complete and let $\pM,\pN \models T$, $\pZ \subseteq \pM$ be a substructure, and $f \colon \pZ \hookrightarrow \pN$. By considering a monster model, we may suppose that $\pZ \subseteq \pN$ and $f$ is the identity. Then by amalgamation there is a model $\pN' \models T$ and embeddings $\varphi,\psi$ such that the following diagram commutes:

\begin{center}
\begin{tikzcd}
& \pN' & \\
\pM \ar[ru,hook,"\varphi"] & & \ar[lu,hook',"\psi"'] \pN \\
& \ar[lu,"\mathrm{Id}"] \pZ \ar[ru,"\mathrm{Id}"'] &
\end{tikzcd}
\end{center}
Again we may suppose that $\pN \subseteq \pN'$ and $\psi$ is the identity, thus by model completeness we have $\pN \preceq \pN'$. Furthermore, the diagram now exactly states that $\varphi$ extends the inclusion $\pZ \hookrightarrow \pN$.
\end{proof}

In order to prove that our theories eliminate quantifiers, it only remains to prove that they have amalgamation. However, the following example shows that this is not the case in general.

\begin{definition}
Let $\Gamma \act{\alpha} X$ be an action of a group on a standard Borel space. We say that $\mu \in \fP(X)$ is \textbf{ergodic} if every $\Gamma$-invariant for $\alpha$ measurable subset of $X$ is either null or connull for $\mu$.

It can be shown that ergodic measures are the extreme points of the convex space $\fP(X)$.
\end{definition}

For Invariant Random Subgroups, we consider the notion of ergodicity with respect to the action $\Gamma \act{} \mathrm{Sub}(\Gamma)$ by conjugation.

\begin{proposition} \label{not quantifier elimination}
Let $\theta$ be a non-ergodic IRS on $F_\infty$. Then ${\fA}_\theta$ does not have quantifier elimination.
\end{proposition}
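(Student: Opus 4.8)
The plan is to exploit the fact that a non-ergodic IRS $\theta$ decomposes as a nontrivial convex combination $\theta = \lambda \theta_1 + (1-\lambda)\theta_2$ of two distinct IRSs, witnessed by a $\Gamma$-invariant measurable set $\Omega \subseteq \mathrm{Sub}(F_\infty)$ with $0 < \theta(\Omega) < 1$. By Lemma \ref{model completeness + amalgamation = quantifier elimination}, since $\fA_\theta$ is model complete (Theorem \ref{model completeness}), it suffices to show that $\fA_\theta$ fails amalgamation. So the goal is to produce two models $\pM_1, \pM_2 \models \fA_\theta$ with a common substructure $\pZ$ that cannot be amalgamated inside any model of $\fA_\theta$.

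The key idea is that the $\theta$-mass of $\Omega$ is ``rigid'': in any action $\alpha$ with IRS $\theta$, the set $\mathrm{Stab}^\alpha{}^{-1}(\Omega)$ has measure $\theta(\Omega)$ and, crucially, this set is (essentially) canonically determined — an element $a$ of the measure algebra lies in it iff for every $\gamma \in F_\infty$ the conditional behaviour of $\gamma$ on $a$ matches being in $\Omega$. More concretely, using that $\mathrm{supp}\ \gamma$ is definable without parameters (Lemma \ref{support definability}), one builds a definable (over $\varnothing$) element $\omega_\alpha \in \pM_\alpha$ representing $[\mathrm{Stab}^\alpha{}^{-1}(\Omega)]$ whenever $\Omega$ is itself a countable Boolean combination of the subbasic clopen sets $\rN_\gamma$; a general $\Gamma$-invariant $\Omega$ of intermediate mass can be approximated (or replaced by such a combination with $0<\theta(\Omega)<1$, which exists since $\theta$ is non-ergodic). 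First I would fix such an $\Omega$ and note $\mu(\omega_\alpha) = \theta(\Omega)$ in every model. Then I would take the common substructure $\pZ$ to be the one-generator $\pL_\infty$-structure sitting over a single element $z$ of measure $\theta(\Omega)$ with all $\gamma$ acting trivially on $z$ and on $\neg z$ — i.e. $\pZ$ is generated by a fixed point of the whole group of measure $\theta(\Omega)$.

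Next I would construct $\pM_1$ and $\pM_2$. In $\pM_1$, realize $\pZ$ so that $z = \omega_{\alpha_1}$, i.e.\ the fixed set of mass $\theta(\Omega)$ is exactly the $\Omega$-part of the IRS; this is possible by choosing $\alpha_1$ to be (an ergodic-decomposition-respecting) action whose restriction over $\Omega$ is suitably trivial-on-stabilizers in the right way. In $\pM_2$, realize $\pZ$ so that $z \neq \omega_{\alpha_2}$ while still $\mu(z) = \theta(\Omega)$ and $z$ is a fixed point of every $\gamma$ — for instance by arranging $z$ to cut across the $\Omega$-part, possible because within each fiber of the stabilizer map there is still room (the fiber carries an atomless algebra) to place a fixed set of the correct mass in the ``wrong'' position. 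Now if a model $\pN \models \fA_\theta$ amalgamated $\pM_1$ and $\pM_2$ over $\pZ$, then inside $\pN$ the image of $z$ would have to equal $\omega_\pN$ (because $\omega$ is $\varnothing$-definable and $z = \omega_{\alpha_1}$ in $\pM_1 \hookrightarrow \pN$, and definable elements are preserved by embeddings into models of the theory) and simultaneously be different from $\omega_\pN$ (reading off the embedding of $\pM_2$) — a contradiction.

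The main obstacle I expect is the construction of $\pM_2$: one must genuinely realize the generator $z$ of $\pZ$ as a group-fixed set of the correct measure $\theta(\Omega)$ that is \emph{not} the canonical $\Omega$-set, and verify this configuration actually extends to a model of $\fA_\theta$ (i.e.\ that it does not force the IRS to change). The delicate point is that being a fixed point of all of $F_\infty$ is a strong constraint; I would handle it by taking an action with IRS $\theta$ that has ``extra'' invariant sets beyond the stabilizer pullbacks — concretely, a product of a $\theta$-action with a trivial/identity action on an auxiliary atomless space, so that there is abundant room for group-invariant sets in arbitrary position while the IRS is unchanged (this is exactly the joining construction of Section \ref{amalgamation of actions} applied with $\boldsymbol\theta$ and an identity factor). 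A secondary subtlety is making the $\varnothing$-definability of $\omega$ fully rigorous for the chosen $\Omega$; this reduces, via Lemma \ref{support definability} and the fact that $\mathrm{dcl}$ is a substructure, to expressing the relevant Boolean combination of supports, which is routine.
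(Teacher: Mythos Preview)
There is a genuine gap in your reduction. You invoke Theorem \ref{model completeness} to get model completeness of $\fA_\theta$, but that theorem is proved only for \emph{hyperfinite} $\theta$, whereas the proposition is stated for an arbitrary non-ergodic IRS. The place this bites is your final step: the claim that ``definable elements are preserved by embeddings into models of the theory'' is false for mere embeddings and needs the embeddings $\pM_i \hookrightarrow \pN$ to be elementary, which is exactly model completeness. The fix is simple but you should make it explicit: argue by contradiction, assuming $\fA_\theta$ has quantifier elimination. Then one direction of Lemma \ref{model completeness + amalgamation = quantifier elimination} gives both amalgamation and model completeness for free, and now your preservation-of-$\omega$ argument yields the contradiction. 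As written, your proof only covers hyperfinite non-ergodic $\theta$.

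Even with that repair, your route is noticeably more elaborate than the paper's. The paper does not go through amalgamation or a $\varnothing$-definable element at all. It simply writes $\theta = t\theta_1 + (1-t)\theta_2$ with $\theta_1 \neq \theta_2$, builds two concrete actions $\alpha,\beta$ on $[0,1]\times\{1,2,3\}$ that swap which of the first two pieces carries $\kappa_1$ versus $\kappa_2$, and takes as common $\pL_\infty$-substructure the finite three-atom algebra with trivial action. Then it observes that the definable predicate $\mu\bigl(x \wedge \bigwedge_{\gamma \in F}\mathrm{supp}\,\gamma\bigr)$ evaluated at the first atom equals $t\,\theta_1(\rN_F)$ in $\pM_\alpha$ and $t\,\theta_2(\rN_F)$ in $\pM_\beta$; were this predicate equivalent to a quantifier-free formula, those two values would have to agree, forcing $\theta_1=\theta_2$. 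This uses nothing beyond Lemma \ref{support definability} and needs no hyperfiniteness, no choice of an invariant Borel $\Omega$, and no verification that a specific element $\omega_\alpha$ is uniformly $\varnothing$-definable across all models. Your approach, once fixed, does give a pleasant conceptual picture (failure of amalgamation because a canonical element is placed inconsistently), but you pay for it with the extra definability checks you flag as obstacles; the paper's argument sidesteps all of that by working with a single finite family of supports.
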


\begin{proof}
Take any finite subset $F \subseteq F_\infty$. Then $\mu \left(x \wedge \underset{\gamma \in F}{\bigwedge}\mathrm{supp}\ \gamma \right) \coloneqq \sup_{\{a_\gamma : \gamma \in F\}}\ \mu \left(x \wedge \underset{\gamma \in F}{\bigwedge} t_\gamma(a_\gamma) \right)$ is a definable predicate in the signature $\pL_\infty$.
However, as we shall see, not all predicates of this form are definable without quantifiers.

Indeed, suppose that for every finite subset $F \subseteq F_\infty$, there is a quantifier free formula $\varphi_F(x)$ equivalent to $\mu \left(x \wedge \underset{\gamma \in F}{\bigwedge}\mathrm{supp}\ \gamma \right)$.

Write $\theta = t \theta_1 + (1-t) \theta_2$ for a $t \in (0,\frac{1}{2}]$ and $\theta_1 \neq \theta_2$ two IRSs on $F_\infty$. Let $\kappa_1$ be a pmp action on $([0,1],\lambda)$ with IRS $\theta_1$ and $\kappa_2$ be a pmp action on $([0,1],\lambda)$ with IRS $\theta_2$. Define
\begin{itemize}
    \item $F_\infty \act{\alpha} (X = [0,1] \times \{1,2,3\}, \mu = t \lambda \times \delta_1 + t \lambda \times \delta_2 + (1-2t) \lambda \times \delta_3)$ that acts like $\kappa_1$ on $[0,1] \times \{1\}$ and acts like $\kappa_2$ both on $[0,1] \times \{2\}$ and on $[0,1] \times \{3\}$.
    \item $F_\infty \act{\beta} (X = [0,1] \times \{1,2,3\}, \mu = t \lambda \times \delta_1 + t \lambda \times \delta_2 + (1-2t) \lambda \times \delta_3)$ that acts like $\kappa_1$ on $[0,1] \times \{2\}$ and acts like $\kappa_2$ both on $[0,1] \times \{1\}$ and on $[0,1] \times \{3\}$.
\end{itemize}
We have $\theta_\alpha = \theta_\beta = \theta$.

Let $\pM$ be the finite measure algebra generated by three atoms $\{a,b,c\}$ of respective measure $t$, $t$ and $1-2t$. By sending $a$ to $[0,1] \times \{1\}$, $b$ to $[0,1] \times \{2\}$ and $c$ to $[0,1] \times \{3\}$, one can embed $\pM$ in both $\pM_\alpha$ and $\pM_\beta$. Then $\pM$ endowed with the trivial action is a common substructure of $\pM_\alpha$ and $\pM_\beta$.

As $\varphi_F(x)$ is quantifier free, we have $\varphi_F^{\pM_\alpha}(a) = \varphi_F^\pM(a) = \varphi_F^{\pM_\beta}(a)$, but
$$\pM_\alpha \models \mu(a \wedge \underset{\gamma \in F}{\bigwedge}\mathrm{supp}\ \gamma) = t \theta_1(\rN_F) \ \mbox{whereas}\ \pM_\beta \models \mu(a \wedge \underset{\gamma \in F}{\bigwedge}\mathrm{supp}\ \gamma) = t \theta_2(\rN_F).$$
Since an IRS is determined by its values on the sets of the form $\rN_F$, we get $\theta_1 = \theta_2$, a contradiction.
\end{proof}

Thus, non-ergodicity of the IRS is an obstacle to quantifier elimination. A natural question is to ask about a converse:

\begin{center}
\textit{For which $\theta$ does the theory $\fA_\theta$ admit quantifier elimination? Is it the case for any ergodic IRS?}
\end{center}

The author does not have any satisfying answer.

However, we answer another interesting question. One can ask what we can reasonably add to the theory $\fA_\theta$ to expand it into a theory $\fA_\theta'$ in a signature $\pL_\infty' \supseteq \pL_\infty$ which has quantifier elimination.

The issue encountered in Proposition \ref{not quantifier elimination} is that formulas involving the supports of the elements of $F_\infty$ may not be equivalent to quantifiers free formulas in $\fA_\theta$.
This motivates us to look at expansions that allow us to talk about the supports of elements of $F_\infty$ in the language. For that we add constants $\{S_\gamma : \gamma \in F_\infty\}$ to the signature $\pL_\infty$ to get a new signature $\pL_\infty'$ and we consider the theory $\fA_\theta'$ consisting of:
\begin{itemize}
    \item The axioms of $\fA_\theta$.
    \item For $\gamma \in F_\infty$, the axioms:
    \begin{itemize}
        \item $\sup_a\ d(S_\gamma \wedge t_\gamma(a),t_\gamma(a)) = 0$.
        \item $\mu(S_\gamma) = \theta(\rN_\gamma)$.
    \end{itemize}
\end{itemize}

This theory expresses that for $\gamma \in F_\infty$, the constant $S_\gamma$ must be interpreted as $\mathrm{supp}\ \gamma^\pM$ in the model $\pM$, as it contains the support by the first axiom and has the same measure by the second one.
\\

We need a last definition in order to prove that the theories $\fA_\theta$ admit amalgamation for $\theta$ hyperfinite:

\begin{definition}
Let $\pM \models \fA_\theta$, we denote by $\pI_\pM$ and we call the IRS of $\pM$ the substructure of $\pM$ generated by the elements $\mathrm{supp}\ \gamma$ for $\gamma \in \Gamma$.

Note that this naming is consistent: let $\pM = \pM_\alpha$ for a pmp action $\Gamma \act{\alpha} (X,\mu)$ of IRS $\theta$. Then $\pI_\pM$ is isomorphic to the measure algebra $\pI_\theta$ associated to the action $\Gamma \act{\boldsymbol{\theta}} (\mathrm{Sub}(\Gamma),\theta)$ and moreover, the map $\mathrm{Stab}^\alpha \colon X \rightarrow \mathrm{Sub}(\Gamma)$ is a lifting of the inclusion $\pI_\pM \subseteq \pM$.
\end{definition}

\begin{theorem} \label{relative independent joining of stone spaces}
Let $\theta$ be an IRS, then the theory $\fA_\theta'$ admits amalgamation in the signature $\pL_\infty'$.
\end{theorem}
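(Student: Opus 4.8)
The plan is to realize $\pM_1,\pM_2$ as the models $\pM_\alpha,\pM_\beta$ attached to pmp actions $F_\infty\act{\alpha}(X,\mu)$ and $F_\infty\act{\beta}(Y,\nu)$, to exhibit the common substructure $\pZ$ as a common factor of $\alpha$ and $\beta$, and to take for $\pN$ the model attached to the relative independent joining of $\alpha$ and $\beta$ over that factor (Definition \ref{relative independent joining}). Almost all the content is concentrated in the verification that this joining still has IRS $\theta$, and this is exactly the step where the constants $S_\gamma$ of $\fA_\theta'$ are used.

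The first observation is that any $\pL_\infty'$-substructure of a model of $\fA_\theta'$ must contain the interpretations of the constants $S_\gamma$, hence — since those are forced to equal $\mathrm{supp}\ \gamma$ — contains the whole IRS subalgebra. Thus $\pZ$ contains a substructure $\pI$ which, through the embeddings $\pZ\hookrightarrow\pM_i$, is identified with $\pI_{\pM_1}$ and with $\pI_{\pM_2}$, and which, with its $F_\infty$-action, is isomorphic to the measure algebra of $(\mathrm{Sub}(F_\infty),\theta)$ equipped with the conjugation action $\boldsymbol{\theta}$. After passing to separable elementary submodels by the L\"owenheim--Skolem theorem (retaining a separable substructure of $\pZ$ containing $\pI$; the general case is handled identically by performing the construction below directly at the level of abstract measure algebras, i.e.\ replacing the disintegration of measures by the relative product of measure algebras), I would use Proposition \ref{liftings} to lift the inclusions $\pZ\hookrightarrow\pM_1$ and $\pZ\hookrightarrow\pM_2$ to equivariant pmp maps $\pi_1\colon X\to W$ and $\pi_2\colon Y\to W$ onto a common standard probability space carrying an action $\xi$, so that $\xi$ is a common factor of $\alpha$ and $\beta$; lifting the inclusion $\pI\hookrightarrow\pZ$ as well produces an equivariant factor map $q\colon W\to(\mathrm{Sub}(F_\infty),\theta)$ satisfying $q\circ\pi_1=\mathrm{Stab}^\alpha$ and $q\circ\pi_2=\mathrm{Stab}^\beta$ almost everywhere.

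I would then set $\pN:=\pM_{\alpha\underset{\xi}{\times}\beta}$, interpreting each $S_\gamma$ as the image of $S_\gamma^{\pM_1}$ under $\widetilde{p_1}$. This is well defined because the factor maps $p_1,p_2$ of the joining make the triangle over $\xi$ commute up to a null set, so the embeddings $\widetilde{p_1}\colon\pM_1\hookrightarrow\pN$ and $\widetilde{p_2}\colon\pM_2\hookrightarrow\pN$ agree on all of $\pZ$, in particular on every $S_\gamma$; these two embeddings are the ones required in Definition \ref{amalgamation}. It remains to check $\pN\models\fA_\theta'$. Atomlessness and separability are inherited through $\widetilde{p_1}$. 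For the IRS: the joining measure $\eta$ is concentrated on the fibre product $\{(x,y):\pi_1(x)=\pi_2(y)\}$, so $\mathrm{Stab}^\alpha(x)=q(\pi_1(x))=q(\pi_2(y))=\mathrm{Stab}^\beta(y)$ for $\eta$-almost every $(x,y)$; since $\gamma$ fixes $(x,y)$ iff $\gamma^\alpha x=x$ and $\gamma^\beta y=y$, this yields $\mathrm{Stab}^{\alpha\underset{\xi}{\times}\beta}=\mathrm{Stab}^\alpha\circ p_1$ almost everywhere and hence $\theta_{\alpha\underset{\xi}{\times}\beta}=\theta_\alpha=\theta$, exactly as in the proof of Lemma \ref{joining over an irs}. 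Finally, for the two axioms governing $S_\gamma$: applying the $F_\infty$-equivariant measure-algebra embedding $\widetilde{p_1}$ to the generators in the formula for the support from Lemma \ref{support equivalence} gives $\widetilde{p_1}(\mathrm{supp}\ \gamma^{\pM_1})\subseteq\mathrm{supp}\ \gamma^{\pN}$; both members have measure $\theta(\rN_\gamma)$ (the left one because $\pM_1\models\fA_\theta'$, the right one by the IRS computation just made), so they coincide, which shows $S_\gamma^{\pN}=\mathrm{supp}\ \gamma^{\pN}$ and that the $S_\gamma$-axioms hold.

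The main obstacle is the IRS computation, and specifically ensuring that the base $\xi$ already sees every stabilizer: this is precisely where $\pZ\supseteq\pI$ is invoked, and it is exactly the property that fails for $\fA_\theta$ — a common substructure of two models of $\fA_\theta$ need not contain the IRS subalgebra, which is why Proposition \ref{not quantifier elimination} gives a counterexample to amalgamation there. The only other point requiring care is the reduction from arbitrary to separable models (equivalently, carrying out the relative independent joining directly for abstract measure algebras), which is routine.
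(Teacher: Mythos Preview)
Your argument is correct and follows the same strategy as the paper's: form the relative independent joining of $\pM_1$ and $\pM_2$ over the common substructure $\pZ$, and use the fact that the constants $S_\gamma$ force $\pI_\theta\subseteq\pZ$ to guarantee that the joining still has IRS $\theta$. The paper carries this out directly at the measure-algebra level, taking the Stone spaces of $\pM_1,\pM_2,\pZ$ and the relative-product measure of \cite[Construction 2.3]{SchrodingerCat}, so that no separability hypothesis is needed. By contrast, your L\"owenheim--Skolem step does not actually reduce the problem to the separable case: amalgamating separable elementary submodels of $\pM_1,\pM_2$ over a separable piece of $\pZ$ does not produce an amalgam of the original structures. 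The abstract relative product you mention in your parenthetical is therefore the real proof rather than a routine extension of the separable argument, and it is precisely what the paper writes down.

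The remaining difference is only in presentation. You verify that the joining has IRS $\theta$ by computing stabilizers pointwise on the fiber product (exactly as in Lemma~\ref{joining over an irs}) and then conclude $S_\gamma^{\pN}=\mathrm{supp}\,\gamma^{\pN}$ from the inclusion $\widetilde{p_1}(\mathrm{supp}\,\gamma^{\pM_1})\subseteq\mathrm{supp}\,\gamma^{\pN}$ together with equality of measures. The paper instead computes algebraically that $\neg\,\mathrm{supp}\,\gamma^{\pN}=\neg S_\gamma^{\pZ}\times\neg S_\gamma^{\pZ}$ and collapses this to $i_1(\neg S_\gamma^{\pZ})$ directly from the definition of the joining measure. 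Both routes are valid and yield the same conclusion.
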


\begin{proof}
Let $\pM_1, \pM_2 \models \fA_\theta'$ and let $\pZ$ be a common substructure of $\pM_1$ and $\pM_2$. Then by definition of the theory $\fA_\theta'$, $\pI_\theta$ is a substructure of $\pZ$ and the inclusions $\pZ \hookrightarrow \pM_1$ and $\pZ \hookrightarrow \pM_2$ send $\pI_\theta$ on $\pI_{\pM_1}$ and $\pI_{\pM_2}$ respectively. For the sake of simplicity, we identify $\pZ$ with its images in $\pM_1$ and $\pM_2$, which implies that $\pI_\theta$, $\pI_{\pM_1}$ and $\pI_{\pM_2}$ are all identified.

Let $X_1$, $X_2$ and $Z$ be the respective Stone spaces of $\pM_1$, $\pM_2$ and $\pZ$ (see \cite[321J]{fremlinMeasureTheoryVol2002}) and let $\mu_1$, $\mu_2$ be the respective inner regular Borel probability measures on $X_1$ and $X_2$. We define an inner regular Borel probability measure $\nu$ on $X_1 \times X_2$ as in \cite[Construction 2.3]{SchrodingerCat} as the continuous extension of the map defined on cylinders by the formula:
$$\nu(a_1 \times a_2) = \int_Z\ \mu_1(a_1|\pZ) \mu_2(a_2|\pZ)\ dz\ \ \mbox{for all}\ a_1 \in \pM_1, a_2 \in \pM_2.$$

The pmp action $F_\infty \act{} (X_1 \times X_2,\nu)$ then induces a structure $\pN \models \fA_{F_\infty}$ that we call the {\bfseries relative independent joining of $\pM_1$ and $\pM_2$ over $\pZ$}.

The following diagram is indeed commutative:
\begin{center}
\begin{tikzcd}
& \pN & \\
\pM_1 \ar[ru,hook] & & \ar[lu,hook'] \pM_2 \\
& \ar[lu,hook'] \pZ \ar[ru,hook] &
\end{tikzcd}
\end{center}
It remains to prove that $\pN \models \fA_\theta$. For that note that
\begin{eqnarray*}
\neg\ \mathrm{supp}\ \gamma^\pN &=& \bigvee \{a : \forall b \subseteq a, \gamma b = b\}\\
&=& \bigvee \{a_1 \times a_2 : \forall b \subseteq a_1 \times a_2, \gamma b = b\}\\
&=& \bigvee \{a_1 \times a_2 : \forall b_1 \subseteq a_1\ \forall b_2 \subseteq a_2, \gamma b_1 = b_1\ \mbox{and}\ \gamma b_2 = b_2\}\\
&=& \neg\ \mathrm{supp}\ \gamma^{\pM_1} \times \neg\ \mathrm{supp}\ \gamma^{\pM_2}\\
&=& \neg\ S_\gamma^\pZ \times \neg\ S_\gamma^\pZ
\end{eqnarray*}
but the definition of $\nu$ implies that $\nu \left( \neg\ S_\gamma^\pZ \times 1_{\pM_2} \right) = \nu \left( \neg\ S_\gamma^\pZ \times \neg\ S_\gamma^\pZ \right)$, so that these two elements of $\pN$ are equal. Letting $i_1$ denote the embedding $\pM_1 \hookrightarrow \pN$, we get the equalities $\neg\ \mathrm{supp}\ \gamma^\pN = \neg\ S_\gamma^\pZ$ and therefore $\mathrm{supp}\ \gamma^\pN = i_1 \left( S_\gamma^\pZ \right) = i_1 \left( \mathrm{supp}\ \gamma^{\pM_1} \right)$.
This being true for any $\gamma \in F_\infty$, it follows that $i_1$ maps any finite intersection of supports in $\pM_1$ to the corresponding intersection of supports in $\pN$, and since $i_1$ also preserves the measure, we can conclude that $\pN \models \fA_\theta$.
\end{proof}

\begin{theorem} \label{quantifier elimination}
Let $\theta$ be a hyperfinite IRS. Then the theory $\fA_\theta'$ eliminates quantifiers in the signature $\pL_\infty'$.
\end{theorem}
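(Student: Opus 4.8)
The plan is to invoke Lemma \ref{model completeness + amalgamation = quantifier elimination}, which reduces the statement to showing that $\fA_\theta'$ is both model complete and admits amalgamation. Amalgamation has already been established for \emph{every} IRS $\theta$ in Theorem \ref{relative independent joining of stone spaces}, so the only remaining task is model completeness of $\fA_\theta'$, which I would deduce from the model completeness of $\fA_\theta$ (Theorem \ref{model completeness}, available because $\theta$ is hyperfinite) together with the fact that $\fA_\theta'$ is a definitional expansion of $\fA_\theta$.

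First I would record that the axioms of $\fA_\theta'$ force, in any model $\pM$, the interpretation $S_\gamma^\pM = \mathrm{supp}\ \gamma^\pM$: the first axiom yields $\mathrm{supp}\ \gamma^\pM \subseteq S_\gamma^\pM$ through the description $\mathrm{supp}\ \gamma = \bigvee\{t_\gamma(a) : a \in \pM\}$ from Lemma \ref{support equivalence}, and the second axiom $\mu(S_\gamma) = \theta(\rN_\gamma) = \mu(\mathrm{supp}\ \gamma^\pM)$ upgrades this inclusion to equality. By Lemma \ref{support definability}, the predicate $y \mapsto d(y,\mathrm{supp}\ \gamma)$ is an $\pL_\infty$-definable predicate $\psi_\gamma(y)$ relative to $\fA_{F_\infty}$, hence relative to $\fA_\theta$, and $\mathrm{supp}\ \gamma^\pM$ is its unique zero. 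Consequently, for every $\pL_\infty'$-formula $\varphi(\bar x)$ there is an $\pL_\infty$-definable predicate $\widehat\varphi(\bar x)$ equivalent to $\varphi$ modulo $\fA_\theta'$: introduce a fresh variable $y_\gamma$ for each of the finitely many constants $S_\gamma$ occurring in $\varphi$, replace $S_\gamma$ by $y_\gamma$, and take the infimum over the $y_\gamma$ against a sufficiently steep penalty term built from the $\psi_\gamma$. This is the standard elimination of an $\emptyset$-definable element in continuous logic; the only routine point is that the substitution preserves the value exactly, which rests precisely on $\mathrm{supp}\ \gamma$ being the unique realization of its defining condition.

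Model completeness of $\fA_\theta'$ then follows. Let $\pM \subseteq \pN$ be separable models of $\fA_\theta'$. Forgetting the symbols of $\pL_\infty' \setminus \pL_\infty$ produces models $\pM \restriction \pL_\infty \subseteq \pN \restriction \pL_\infty$ of $\fA_\theta$, and by Theorem \ref{model completeness} this inclusion is $\pL_\infty$-elementary. Given an $\pL_\infty'$-formula $\varphi(\bar x)$ and a tuple $\bar a$ from $\pM$, we obtain $\varphi^\pM(\bar a) = \widehat\varphi^{\,\pM}(\bar a) = \widehat\varphi^{\,\pN}(\bar a) = \varphi^\pN(\bar a)$, where the outer equalities use equivalence modulo $\fA_\theta'$ and the middle one uses $\pL_\infty$-elementarity. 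Hence $\pM \preceq \pN$, so $\fA_\theta'$ is model complete, and Lemma \ref{model completeness + amalgamation = quantifier elimination} gives quantifier elimination for $\fA_\theta'$ in the signature $\pL_\infty'$.

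I expect the main obstacle here to be bookkeeping rather than genuine mathematics: carefully carrying out the elimination of the constants $S_\gamma$ inside arbitrary $\pL_\infty'$-definable predicates in the continuous setting (the $\inf$-with-penalty device) and verifying that passing to $\pL_\infty$-reducts and back is compatible with the metric-completeness requirements on substructures and with the constants being preserved by inclusions. All of the substantive content — amalgamation via the relative independent joining and the transfer of Rokhlin-type approximation through Theorem \ref{approximate parametrized conjugacy for factor actions} — is already contained in the earlier results.
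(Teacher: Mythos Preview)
Your proposal is correct and follows essentially the same route as the paper: invoke Lemma \ref{model completeness + amalgamation = quantifier elimination}, get amalgamation from Theorem \ref{relative independent joining of stone spaces}, and deduce model completeness of $\fA_\theta'$ from that of $\fA_\theta$ (Theorem \ref{model completeness}). The only difference is that your reduction for model completeness is more laborious than necessary: since the new symbols are \emph{constants}, any $\pL_\infty'$-formula $\varphi(\bar x)$ is literally $\psi(\bar x, S_{\gamma_1},\dots,S_{\gamma_k})$ for some $\pL_\infty$-formula $\psi$, and an $\pL_\infty'$-inclusion $\pM \subseteq \pN$ automatically sends $S_\gamma^\pM$ to $S_\gamma^\pN$; hence one applies Theorem \ref{model completeness} to $\psi$ with the $S_\gamma$'s treated as extra parameters in $\pM$, avoiding the $\inf$-with-penalty device entirely.
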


\begin{proof}
We use Lemma \ref{model completeness + amalgamation = quantifier elimination}.

We just saw that $\fA_\theta'$ admits amalgamation.

For model completeness, take $\pM \subseteq \pN$ be two models of $\fA_\theta'$ and let us prove that $\pM \preceq \pN$. Let $\varphi(\bar{x})$ be an $\pL_\infty'$-formula and $\bar{p} \in \pM^n$. Then $\varphi(\bar{x})$ is equivalent to a formula of the form $\psi(\bar{x},S_{\bar{\gamma}})$ where $\psi$ is a $\pL_\infty$-formula, and the constants of the form $S_\gamma$ are preserved under the inclusion $\pM \subseteq \pN$. Therefore, it suffices to apply Theorem \ref{model completeness} to $\psi$ and to consider the elements $S_{\bar{\gamma}}$ as parameters added to $\bar{p}$ to conclude.
\end{proof}

As a corollary, we get a class of IRSs $\theta$ for which the theory $\fA_\theta$ admits quantifier elimination.

\begin{corollary}
The theory of free actions of an amenable group admits amalgamation. Namely, if $\theta$ is the Dirac measure $\delta_N$ for a co-amenable normal subgroup $N \leq F_\infty$, then $\fA_\theta$ has quantifier elimination.
\end{corollary}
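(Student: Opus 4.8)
The plan is to observe that for $\theta=\delta_N$ with $N\leq F_\infty$ normal and co-amenable, the expansion $\fA_\theta'$ of $\fA_\theta$ by the support constants $S_\gamma$ is merely \emph{definitional}, so that the quantifier elimination of Theorem \ref{quantifier elimination} carries over to $\fA_\theta$ itself. The two facts I would establish first are: (i) $\delta_N$ is a genuine IRS and is hyperfinite, and (ii) in every model of $\fA_\theta$ the constant $S_\gamma$ is forced to equal $0$ or $1$.

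For (i): normality of $N$ makes $\delta_N$ invariant under the conjugation action of $F_\infty$ on $\mathrm{Sub}(F_\infty)$, so it is an IRS, and $\delta_N(\rN_\gamma)=1$ precisely when $\gamma\in N$. To see it is hyperfinite I would take any free pmp action of the amenable group $F_\infty/N$ on a standard probability space (e.g.\ a Bernoulli shift) and view it as an $F_\infty$-action; its a.e.\ stabiliser is $N$, so its IRS is $\delta_N$, and it is hyperfinite by Ornstein--Weiss \cite{OrnsteinWeiss}. Hence $\delta_N$ is a hyperfinite IRS, which is what lets me invoke Theorems \ref{model completeness} and \ref{quantifier elimination} below. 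For (ii): in any $\pM\models\fA_\theta$ and any $\gamma\notin N$, the instance of the axiom of $\fA_\theta$ at $F=\{\gamma\}$ reads $\sup_a\mu(t_\gamma(a))=\delta_N(\rN_\gamma)=0$, forcing $t_\gamma(a)=0$ for every $a$, hence $\mathrm{supp}\ \gamma^\pM=\bigvee\{t_\gamma(a):a\in\pM\}=0$; for $\gamma\in N$ trivially $\mathrm{supp}\ \gamma^\pM\subseteq 1$. Consequently, interpreting $S_\gamma$ as the closed $\pL_\infty$-term $1$ (if $\gamma\in N$) or $0$ (if $\gamma\notin N$) is the unique way to expand $\pM$ to a model of $\fA_\theta'$, and conversely in any model of $\fA_\theta'$ the axiom $\mu(S_\gamma)=\delta_N(\rN_\gamma)\in\{0,1\}$ pins $S_\gamma$ down to $1$ or $0$.

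With (i) and (ii) in hand I would finish as follows. Since the expansion is definitional, every $\pL_\infty$-substructure automatically contains $0$ and $1$, hence is an $\pL_\infty'$-substructure, and every $\pL_\infty'$-formula is $\fA_\theta'$-equivalent to the $\pL_\infty$-formula obtained by replacing each $S_\gamma$ by $1$ or $0$. By Theorem \ref{quantifier elimination}, $\fA_\theta'$ eliminates quantifiers in $\pL_\infty'$; so given an $\pL_\infty$-formula $\varphi$, I read it as an $\pL_\infty'$-formula, take a quantifier-free $\pL_\infty'$-formula equivalent to it modulo $\fA_\theta'$, and substitute back the constants $1,0$ to obtain a quantifier-free $\pL_\infty$-formula equivalent to $\varphi$ modulo $\fA_\theta$. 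Alternatively one may route through Lemma \ref{model completeness + amalgamation = quantifier elimination}: $\fA_\theta$ is model complete by Theorem \ref{model completeness} (as $\delta_N$ is hyperfinite), and it admits amalgamation because a common $\pL_\infty$-substructure of two models of $\fA_\theta$ is, by (ii), a common $\pL_\infty'$-substructure of their $\fA_\theta'$-expansions, so Theorem \ref{relative independent joining of stone spaces} supplies a joining whose $\pL_\infty$-reduct does the job — this also yields the first assertion of the corollary.

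I do not expect any serious obstacle; the whole argument is a reduction to Theorems \ref{model completeness}, \ref{quantifier elimination} and \ref{relative independent joining of stone spaces}. The only points needing a little care are that co-amenability of $N$ (rather than amenability of $F_\infty$) is exactly what makes $F_\infty/N$ amenable so that Ornstein--Weiss applies, and that normality of $N$ is used twice — to make $\delta_N$ conjugation-invariant and to make an $F_\infty$-action with IRS $\delta_N$ factor through $F_\infty/N$. One should also check that the reduct/expansion passage genuinely preserves substructures, which is immediate since the new constants are interpreted by the closed terms $0$ and $1$.
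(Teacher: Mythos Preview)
Your approach is exactly the paper's: observe that in any model of $\fA_{\delta_N}$ each $\mathrm{supp}\,\gamma$ is $0$ or $1$, so the expansion $\fA_\theta'$ is definitional and the quantifier elimination of Theorem~\ref{quantifier elimination} transfers to $\fA_\theta$. One harmless slip in (ii): the two cases are reversed --- since the a.e.\ stabiliser equals $N$, one has $\mathrm{supp}\,\gamma=0$ for $\gamma\in N$ and $\mathrm{supp}\,\gamma=1$ for $\gamma\notin N$ (as the paper states); this does not affect the argument, though your ``trivially $\mathrm{supp}\,\gamma\subseteq 1$'' for the remaining case should be sharpened to equality using the same axiom with value $1$.
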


\begin{proof}
Simply note that the support of an element $\gamma \in F_\infty$ in a model of $\fA_\theta$ is either $0$ (if $\gamma \in N$) or $1$ (if $\gamma \notin N$). It follows that the theories $\fA_\theta$ and $\fA_\theta'$ completely coincide, hence the result.
\end{proof}

For $\pM \models \fA_{\infty}$ and $A \subseteq \pM$, we write $\left< A \right>$ for the closed subalgebra of $\pM$ (that is, the substructure of $\pM$ as a model of $\AMA$) generated by $A$.

\begin{theorem} \label{definable closure}
Let $\pM \models \fA_\theta$ and $A \subseteq \pM$. Then the definable closure of $A$ in $\pM$ is $\left< F_\infty A \cup \pI_\pM \right>$.
\end{theorem}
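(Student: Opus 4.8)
The plan is to prove the two inclusions separately, the inclusion $\langle F_\infty A\cup\pI_\pM\rangle\subseteq\mathrm{dcl}^\pM(A)$ being the easy one. For $a\in A$ and $\gamma\in F_\infty$ the element $\gamma^\pM a$ is the value of the $\pL_\infty$-term $\gamma(a)$ and so lies in $\mathrm{dcl}^\pM(A)$, while each support $\mathrm{supp}\,\gamma^\pM$ lies in $\mathrm{dcl}^\pM(\varnothing)\subseteq\mathrm{dcl}^\pM(A)$ by Lemma \ref{support definability}. Since $\mathrm{dcl}^\pM(A)$ is (the universe of) a substructure of $\pM$, hence in particular a closed measure subalgebra, it contains the closed subalgebra generated by $F_\infty A\cup\{\mathrm{supp}\,\gamma^\pM:\gamma\in F_\infty\}$; as $\pI_\pM$ is by definition generated by the supports, this closed subalgebra is exactly $\langle F_\infty A\cup\pI_\pM\rangle$.

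For the reverse inclusion I would work in the expansion $\fA_\theta'$. By Lemma \ref{support definability} the constants $S_\gamma$ denote the $\varnothing$-definable elements $\mathrm{supp}\,\gamma$, so $\fA_\theta'$ is a definitional expansion of $\fA_\theta$ and $\mathrm{dcl}^\pM_{\fA_\theta}(A)=\mathrm{dcl}^\pM_{\fA_\theta'}(A)$ when $\pM$ carries its canonical $\pL_\infty'$-structure. Moreover the $\pL_\infty'$-substructure of $\pM$ generated by $A$ is precisely $\pB:=\langle F_\infty A\cup\pI_\pM\rangle$: it must contain $A$, the constants $\mathrm{supp}\,\gamma^\pM$, and be closed under the $F_\infty$-action, and conversely $\pB$ is $F_\infty$-invariant since $\gamma\cdot\mathrm{supp}\,\delta^\pM=\mathrm{supp}(\gamma\delta\gamma^{-1})^\pM$ and $F_\infty A$ is $F_\infty$-invariant. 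Thus it suffices to prove $\mathrm{dcl}^\pM_{\fA_\theta'}(A)\subseteq\pB$, for which I fix $b\in\pM\setminus\pB$ and show $b\notin\mathrm{dcl}^\pM_{\fA_\theta'}(A)$.

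To move $b$ I would use the amalgamation construction of Theorem \ref{relative independent joining of stone spaces}: form the relative independent joining $\pN\models\fA_\theta'$ of $\pM$ with itself over the common substructure $\pB$ (legitimate since $\pB\supseteq\pI_\pM\cong\pI_\theta$), obtaining two $\pL_\infty'$-embeddings $i_1,i_2\colon\pM\hookrightarrow\pN$ which agree on $\pB\supseteq A$. Writing $\pM=\mathrm{MAlg}(X,\mu)$ with $\pB$ a sub-$\sigma$-algebra, the formula defining the joining measure gives $\nu(i_1(b)\bigtriangleup i_2(b))=2\int(\cP_\pB(b)-\cP_\pB(b)^2)\,d\mu$, which is positive because $\cP_\pB(b)\notin\{0,1\}$ on a set of positive measure (otherwise $b$ would be $\pB$-measurable, i.e. $b\in\pB$); hence $i_1(b)\neq i_2(b)$. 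Now, $\theta$ being hyperfinite, $\fA_\theta'$ eliminates quantifiers by Theorem \ref{quantifier elimination}, hence is model complete by Lemma \ref{model completeness + amalgamation = quantifier elimination}, so $i_1$ is an elementary embedding; and since embeddings preserve quantifier-free types, quantifier elimination forces $i_1(b)$ and $i_2(b)$ to realize the same type over $i_1(A)=i_2(A)$ in $\pN$. Being distinct, $i_1(b)$ is not the unique realization of $\mathrm{tp}^\pN(i_1(b)/i_1(A))$, so $i_1(b)\notin\mathrm{dcl}^\pN(i_1(A))$, and elementarity of $i_1$ then yields $b\notin\mathrm{dcl}^\pM(A)$, as desired.

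I expect the main subtlety to be the bookkeeping of the passage to $\fA_\theta'$ — checking that the definitional expansion leaves $\mathrm{dcl}$ unchanged, that the $\pL_\infty'$-substructure generated by $A$ is $\langle F_\infty A\cup\pI_\pM\rangle$, and that the embeddings produced by the joining construction genuinely respect the constants $S_\gamma$ (this is exactly the computation $\mathrm{supp}\,\gamma^\pN=i_1(S_\gamma^\pB)$ carried out in the proof of Theorem \ref{relative independent joining of stone spaces}) — rather than any single hard estimate; the measure computation showing $i_1(b)\neq i_2(b)$ is routine once the joining is set up.
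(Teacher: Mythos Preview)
Your argument is correct, but it takes a different route from the paper's. The paper establishes the hard inclusion purely syntactically: it observes that every $\pL_\infty'$-formula with parameters in $A$ is, by quantifier elimination in $\fA_\theta'$, equivalent to a quantifier-free $\pL_\infty$-formula with parameters in $A\cup\pI_\pM$, and that any atomic $\pL_\infty$-formula over $A$ is an atomic $\pL$-formula over $F_\infty A$ (since the function symbols $\gamma$ are interpreted by automorphisms). This reduces the computation of $\mathrm{dcl}$ to the already-known fact that $\mathrm{dcl}(C)=\langle C\rangle$ in $\AMA$, and the result falls out immediately. Your proof is instead semantic: you identify $\pB=\langle F_\infty A\cup\pI_\pM\rangle$ as the $\pL_\infty'$-substructure generated by $A$, then for $b\notin\pB$ you explicitly exhibit a second realization of $\mathrm{tp}(b/A)$ by forming the relative independent joining of $\pM$ with itself over $\pB$ and invoking quantifier elimination to compare types. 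Both approaches ultimately hinge on Theorem~\ref{quantifier elimination}; the paper's is shorter because it outsources the endgame to the $\AMA$ computation, while yours is more self-contained (you never appeal to the description of $\mathrm{dcl}$ in $\AMA$) and in fact shows a bit more, namely that any $b\notin\pB$ can be moved over $A$ inside an elementary extension --- information that could be useful elsewhere, e.g.\ for showing $\mathrm{dcl}=\mathrm{acl}$.
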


\begin{proof}
On the one hand, $A \subseteq \mathrm{dcl}^\pM(A)$ and by Lemma \ref{support definability}, for $\gamma \in F_\infty$, $\mathrm{supp}\ \gamma^\pM \in \mathrm{dcl}^\pM(A)$. Thus we get the first inclusion.

On the other hand, since $\fA_\theta'$ expands $\fA_\theta$, the definable closure of $A$ in the theory $\fA_\theta$ is contained in the definable closure of $A$ in the theory $\fA_\theta'$. Let us compute this definable closure $D$.

First, we notice that the function symbols $\gamma$ are interpreted by automorphisms and thus any atomic $\pL_\infty$-formula with parameters in $A$ is equivalent to an atomic $\pL$-formula with parameters in $F_\infty A$. This remark then extends to quantifier free formulas.

Then, by Theorem \ref{quantifier elimination}, any $\pL_\infty'$-formula with parameters in $A$ is equivalent to a quantifier free $\pL_\infty'$-formula with parameters in $A$ and since we only added constants in $\pL_\infty$, it is moreover equivalent to a quantifier free $\pL_\infty$-formula with parameters in $A \cup \pI_\pM$.

Combining the two latter properties and the fact that $\mathrm{dcl}(A) = \left< A \right >$ in the theory $\AMA$, we get that $D = \left< F_\infty (A \cup \pI_\pM) \right>$. Furthermore, $\pI_\pM$ is a substructure and so $\left< F_\infty (A \cup \pI_\pM) \right> = \left< F_\infty A \cup \pI_\pM \right>$.

Hence the conclusion.
\end{proof}

\subsection{Stability and Independence}

We recall some definitions from \cite{yaacovModelTheoryMetric2008}.

\begin{definition}
Let $\kappa$ be a cardinal. A \textbf{$\kappa$-universal domain} for a theory $T$ is a $\kappa$-saturated and strongly $\kappa$-homogeneous model of $T$. If $\pU$ is a $\kappa$-universal domain and $A \subseteq \pU$, we say that $A$ is \textbf{small} if $|A| < \kappa$.
\end{definition}

\begin{definition}
Let $\pU$ be a $\kappa$-universal domain for $T$. A \textbf{stable independence relation} on $\pU$ is a relation $A \underset{C}{\ind} B$ on triples of small subsets of $\pU$ satisfying the following properties, for all small $A,B,C,D \subseteq \pU$, finite $\bar{u}, \bar{v} \subseteq \pU$ and small $\pM \preceq \pU$ :
\begin{enumerate}
    \item \textit{Invariance under automorphisms of $\pU$}.
    \item \textit{Symmetry:} $A \underset{C}{\ind} B \Longleftrightarrow B \underset{C}{\ind} A$.
    \item \textit{Transitivity:} $A \underset{C}{\ind} BD \Longleftrightarrow A \underset{C}{\ind} B \wedge A \underset{BC}{\ind} D$.
    \item \textit{Finite character:} $A \underset{C}{\ind} B$ if and only if $\bar{a} \underset{C}{\ind} B$ for every finite $\bar{a} \subseteq A$.
    \item \textit{Existence:} There exists $A'$ such that $\mathrm{tp}(A'/C) = \mathrm{tp}(A/C)$ and $A' \underset{C}{\ind} B$.
    \item \textit{Local character:} There exists $B_0 \subseteq B$ such that $|B_0| \leq |T|$ and $\bar{u} \underset{B_0}{\ind} B$.
    \item \textit{Stationarity of types:} If $\mathrm{tp}(A/\pM) = \mathrm{tp}(B/\pM)$ and $A \underset{\pM}{\ind} C$ and $B \underset{\pM}{\ind} C$, then
    
$\mathrm{tp}(A/\pM \cup C)= \mathrm{tp}(B/\pM \cup C)$.
\end{enumerate}
\end{definition}

\begin{proposition}[\cite{yaacovModelTheoryMetric2008}]
Let $\kappa > |T|$ and let $\pU$ be a $\kappa$-universal domain. Then the theory $T$ is stable if and only if there exists a stable independence relation on $\pU$, and in this case the stable independence relation is the independence relation given by non-dividing.
\end{proposition}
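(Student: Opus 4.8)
This is the fundamental theorem of stability theory in continuous logic; rather than a new argument, the plan is to recall the structure of the proof from \cite{yaacovModelTheoryMetric2008}. It splits into three parts: (i) if $T$ is stable, then the relation ``$\mathrm{tp}(A/BC)$ does not divide over $C$'' is a stable independence relation on $\pU$; (ii) conversely, if $\pU$ carries some stable independence relation, then $T$ is stable; (iii) the stable independence relation is unique when it exists, hence must coincide with non-dividing.

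For (i) I would first develop the local machinery: $\phi$-types and an ordinal-valued local rank (the metric analogue of Shelah's rank $R(\,\cdot\,,\phi,k)$), using stability to see that this rank is well-defined and drops only finitely often along a forking chain. One then defines dividing and forking, checks that they coincide in the stable context, and verifies the seven axioms for non-forking. Invariance, finite character and monotonicity are routine; existence follows from extending a non-forking type to an arbitrary larger base (via compactness together with an Erd\H{o}s--Rado argument producing an indiscernible sequence); transitivity is read off the rank. The substantive points are \emph{local character} (count $\phi$-types over a countable set to produce a base of size $\le |T|$ over which nothing forks), \emph{symmetry} (build a Morley sequence from a coheir or definable extension and use symmetry of indiscernible witnesses to dividing), and \emph{stationarity over a model} (a non-forking type over a model is definable and finitely satisfiable, so has a unique non-forking extension to any superset — here one needs the continuous analogue of the finite equivalence relation theorem).

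For (ii), given a stable independence relation on $\pU$, I would bound the number of types over a small set $B$: local character yields $B_0 \subseteq B$ with $|B_0| \le |T|$ over which the type does not fork, and stationarity — applied over a small model containing $B_0$, after using invariance and existence — shows that $\mathrm{tp}(\bar u / B)$ is determined by $\mathrm{tp}(\bar u / B_0)$ together with a bounded amount of extra data; counting then gives the stability cardinality bound. For (iii), one shows that any stable independence relation both refines and is refined by non-dividing, using symmetry, existence and stationarity — the abstract ``canonicity of forking'' argument — which simultaneously identifies non-dividing as the unique stable independence relation.

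The hard part is the local stability package underlying (i): setting up $\phi$-types for $[0,1]$-valued formulas, proving definability of types in a stable continuous theory, and proving the metric finite equivalence relation theorem, from which symmetry and stationarity flow. All of this is carried out in \cite{yaacovModelTheoryMetric2008}, which is why the statement is quoted rather than reproved here.
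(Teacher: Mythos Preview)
Your proposal is appropriate: the paper gives no proof of this proposition at all --- it is simply quoted as a known result from \cite{yaacovModelTheoryMetric2008}, and your sketch accurately outlines the standard stability-theoretic argument carried out there. There is nothing to compare, since the paper's ``proof'' is just the citation.
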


Thus, in order to prove that our theories are stable, we only need to define a stable independence relation. Ben Yaacov proved in \cite[Thm. 4.1]{SchrodingerCat} that the classical relation of independence of events was the required one in the case of measure algebras without group actions. Now that we described the definable closures in our theories, the proof of Ben Yaacov naturally adapts to this case.

\begin{definition} \label{stability}
From now on, we write $\left< \left< A \right> \right>$ for $\mathrm{dcl}^\pU(A)$.

Let $A,B,C \subseteq \pU$, we say that $A$ and $B$ are independent over $C$ and we write $A \underset{C}{\ind} B$ if we have $\forall a \in \left< \left< A \right> \right>,\ \forall b \in \left< \left< B \right> \right>$, $\cP_{\left< \left< C \right> \right>}(a) \cP_{\left< \left< C \right> \right>}(b) = \cP_{\left< \left< C \right> \right>}(a \wedge b)$.
\end{definition}

We will need the following propositions:

\begin{proposition}[{\cite[Proposition 5.6]{FOMP}}] \label{independence lemma}
Let $A,B,C \subseteq \pU \models \fA_{F_\infty}$. Then we have $A \underset{C}{\ind} B$ if and only if  $\forall a \in \left< \left< A \right> \right>$,
$$\cP_{\left< \left< BC \right> \right>}(a) = \cP_{\left< \left< C \right> \right>}(a).$$
\end{proposition}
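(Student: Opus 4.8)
The plan is to reduce the equivalence to a routine computation with conditional expectations on the Hilbert spaces $\rL^2(\cdot)$; it is the standard fact that conditional independence of $\left<\left< A\right>\right>$ and $\left<\left< B\right>\right>$ over $\left<\left< C\right>\right>$ is equivalent to $\cP_{\left<\left< BC\right>\right>}$ agreeing with $\cP_{\left<\left< C\right>\right>}$ on $\rL^2(\left<\left< A\right>\right>)$. First I would invoke Theorem \ref{definable closure} to describe the definable closures concretely: writing $\mathfrak a$, $\mathfrak b$, $\mathfrak c$ for $\left<\left< A\right>\right>$, $\left<\left< B\right>\right>$, $\left<\left< C\right>\right>$, each is a substructure of $\pU$ as an $\AMA$-model, hence an honest closed measure subalgebra, and moreover $\left<\left< BC\right>\right>$ coincides with the closed subalgebra $\mathfrak{bc}$ generated by $\mathfrak b\cup\mathfrak c$ — both equal $\left< F_\infty B\cup F_\infty C\cup\pI_\pU\right>$. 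Consequently $\cP_{\mathfrak a},\cP_{\mathfrak b},\cP_{\mathfrak c},\cP_{\mathfrak{bc}}$ are the orthogonal projections of $\rL^2(\pU)$ onto the corresponding closed subspaces, and I would use without further comment the standard properties of conditional expectation: iterated conditioning $\cP_{\mathfrak c}\circ\cP_{\mathfrak{bc}}=\cP_{\mathfrak c}$ (valid since $\mathfrak c\subseteq\mathfrak{bc}$), self-adjointness, and the pull-out rule $\cP_{\mathfrak d}(fg)=f\,\cP_{\mathfrak d}(g)$ whenever $f$ is $\mathfrak d$-measurable.

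For the direction $(\Leftarrow)$, assuming $\cP_{\mathfrak{bc}}(\1_a)=\cP_{\mathfrak c}(\1_a)$ for all $a\in\mathfrak a$, I would fix $a\in\mathfrak a$ and $b\in\mathfrak b$, note that $\1_b$ is $\mathfrak{bc}$-measurable because $b\in\mathfrak b\subseteq\mathfrak{bc}$, and run the chain
\[
\cP_{\mathfrak c}(a\wedge b)=\cP_{\mathfrak c}\bigl(\cP_{\mathfrak{bc}}(\1_a\1_b)\bigr)=\cP_{\mathfrak c}\bigl(\1_b\,\cP_{\mathfrak{bc}}(\1_a)\bigr)=\cP_{\mathfrak c}\bigl(\1_b\,\cP_{\mathfrak c}(\1_a)\bigr)=\cP_{\mathfrak c}(\1_a)\,\cP_{\mathfrak c}(\1_b),
\]
using the hypothesis at the third equality and pull-out at the second and fourth. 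This is exactly the defining condition for $A\underset{C}{\ind} B$ from Definition \ref{stability}, so no symmetrisation is needed.

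For the direction $(\Rightarrow)$, assuming the product identity $\cP_{\mathfrak c}(a)\,\cP_{\mathfrak c}(b)=\cP_{\mathfrak c}(a\wedge b)$ for all $a\in\mathfrak a$, $b\in\mathfrak b$, I would fix $a\in\mathfrak a$ and aim to show $\cP_{\mathfrak{bc}}(\1_a)=\cP_{\mathfrak c}(\1_a)$. Since $\cP_{\mathfrak c}(\1_a)$ is already $\mathfrak{bc}$-measurable, it suffices to check that $\1_a-\cP_{\mathfrak c}(\1_a)$ is orthogonal to $\rL^2(\mathfrak{bc})$. The only step that is not pure formula-chasing is to reduce this orthogonality test to ``rectangles'': the family $\{b\wedge c:b\in\mathfrak b,\ c\in\mathfrak c\}$ is stable under $\wedge$ and contains $1$, so by inclusion--exclusion the indicators of the Boolean algebra it generates lie in the linear span of $\{\1_{b\wedge c}\}$, and that Boolean algebra is $d_\mu$-dense in $\mathfrak{bc}$; hence the finite linear combinations of the $\1_b\1_c$ are dense in $\rL^2(\mathfrak{bc})$. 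Against such a vector one computes, on one side, $\langle\1_a,\1_b\1_c\rangle=\int\cP_{\mathfrak c}(a\wedge b)\,\1_c=\int\cP_{\mathfrak c}(a)\,\cP_{\mathfrak c}(b)\,\1_c$ by the hypothesis, and, on the other side, $\langle\cP_{\mathfrak c}(\1_a),\1_b\1_c\rangle=\int\cP_{\mathfrak c}(a)\,\1_c\,\cP_{\mathfrak c}(b)$ by pull-out (both $\cP_{\mathfrak c}(\1_a)$ and $\1_c$ being $\mathfrak c$-measurable); the two integrals coincide, so the orthogonality holds and $\cP_{\mathfrak{bc}}(\1_a)=\cP_{\mathfrak c}(\1_a)$.

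I do not expect a genuine obstacle: the argument is essentially bookkeeping. The two points that need care are invoking Theorem \ref{definable closure} correctly — so that $\mathfrak a,\mathfrak b,\mathfrak c$ really are closed subalgebras and $\left<\left< BC\right>\right>=\langle\mathfrak b\cup\mathfrak c\rangle$, which is what legitimises the $\rL^2$-conditional-expectation formalism in the first place — and the density reduction in the $(\Rightarrow)$ direction; both are standard, and the write-up should be short once they are in place.
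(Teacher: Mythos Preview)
The paper does not prove this proposition: it is quoted from \cite[Proposition~5.6]{FOMP} and stated without proof. Your argument is the standard one for this conditional-independence characterisation and is correct.

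One remark on scope. In the $(\Rightarrow)$ direction you need the span of $\{\1_b\1_c:b\in\mathfrak b,\ c\in\mathfrak c\}$ to be dense in $\rL^2(\mathfrak{bc})$, and for that you invoke Theorem~\ref{definable closure} to identify $\left<\left< BC\right>\right>$ with the closed subalgebra $\langle\mathfrak b\cup\mathfrak c\rangle$. That theorem is stated for $\pM\models\fA_\theta$ and its proof uses Theorem~\ref{quantifier elimination}, hence hyperfiniteness of $\theta$, whereas the proposition is phrased for arbitrary $\pU\models\fA_{F_\infty}$. This is harmless in context --- the proposition is only ever applied inside the proof of the stability theorem, where $\pU\models\fA_\theta$ with $\theta$ hyperfinite --- but strictly speaking your write-up establishes a slightly narrower statement than the one on the page. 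The $(\Leftarrow)$ direction, by contrast, uses only $\mathfrak b,\mathfrak c\subseteq\mathfrak{bc}$, which follows from monotonicity of $\mathrm{dcl}$ and needs no such hypothesis.
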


\begin{proposition}[{\cite[Lemma 2.7]{SchrodingerCat}}] \label{ben yaacov}
Let $\theta$ be a hyperfinite IRS on $F_\infty$.

Let $\pU \models \fA_\theta'$ and let $\pM_1, \pM_2$ be small substructures of $\pU$. Let $\pZ$ be a common substructure of $\pM_1$ and $\pM_2$. Let $\pM_1 \wedge \pM_2$ be the substructure of $\pU$ generated by $\pM_1$ and $\pM_2$ and define $\pN$ the relative independent joining of $\pM_1$ and $\pM_2$ over $\pZ$ as in Theorem \ref{relative independent joining of stone spaces}.

Then $\pM_1 \underset{\pZ}{\ind} \pM_2$ if and only if $\pM_1 \wedge \pM_2 \simeq \pN$.
\end{proposition}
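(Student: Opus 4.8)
The plan is to adapt Ben Yaacov's argument \cite[Lemma 2.7]{SchrodingerCat}, the only genuinely new points being the description of the relevant definable closures and the $F_\infty$-equivariance of the isomorphism that gets produced. First I would do the bookkeeping: since $\pN$ is built as in Theorem \ref{relative independent joining of stone spaces}, we have $\pM_1,\pM_2\models\fA_\theta'$, so the constants $S_\gamma$ force $\pI_\theta\subseteq\pZ\subseteq\pM_1\cap\pM_2$; as $\pM_1$, $\pM_2$ and $\pZ$ are substructures closed under $F_\infty$, Theorem \ref{definable closure} gives $\langle\langle\pM_i\rangle\rangle=\langle F_\infty\pM_i\cup\pI_\pU\rangle=\pM_i$ and likewise $\langle\langle\pZ\rangle\rangle=\pZ$. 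Hence, by Proposition \ref{independence lemma}, $\pM_1\underset{\pZ}{\ind}\pM_2$ holds in $\pU$ if and only if $\cP_{\pM_2}^{\pU}(a)=\cP_{\pZ}(a)$ for every $a\in\pM_1$, where $\cP_{\pZ}(a)$ is the projection onto $\rL^2(\pZ)$ (intrinsic to $\pM_1$) and $\cP_{\pM_2}^{\pU}(a)$ agrees with the conditional expectation of $a$ onto $\pM_2$ computed inside $\pM_1\wedge\pM_2$.

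Next I would record the analogous identity inside $\pN$. The defining formula of $\nu$ in Theorem \ref{relative independent joining of stone spaces} says exactly that $\cP_{\pM_2}^{\pN}(a_1)=\cP_{\pZ}(a_1)$ for all $a_1\in\pM_1$; equivalently, by self-adjointness of the orthogonal projections, for all $c_1\in\pM_1$ and $c_2\in\pM_2$,
$$\nu(c_1\wedge c_2)=\langle\1_{c_1},\1_{c_2}\rangle_{\rL^2(\pN)}=\langle\cP_{\pZ}(\1_{c_1}),\cP_{\pZ}(\1_{c_2})\rangle_{\rL^2(\pZ)},$$
and the right-hand side depends only on $\pM_1$, $\pM_2$ and $\pZ$. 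Running the same computation in $\pU$, the hypothesis $\pM_1\underset{\pZ}{\ind}\pM_2$ is equivalent to $\mu^{\pU}(c_1\wedge c_2)=\langle\cP_{\pZ}(\1_{c_1}),\cP_{\pZ}(\1_{c_2})\rangle_{\rL^2(\pZ)}$ for all such $c_1,c_2$, i.e.\ to $\mu^{\pU}(c_1\wedge c_2)=\nu(c_1\wedge c_2)$.

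For ($\Leftarrow$): an isomorphism $\Psi\colon\pN\to\pM_1\wedge\pM_2$ compatible with the two embeddings carries $\cP_{\pM_2}^{\pN}(a_1)=\cP_{\pZ}(a_1)$ to the same equality in $\pM_1\wedge\pM_2$, hence in $\pU$, so $\pM_1\underset{\pZ}{\ind}\pM_2$. For ($\Rightarrow$): by the second paragraph the two measures agree on every element $c_1\wedge c_2$ with $c_1\in\pM_1$, $c_2\in\pM_2$, and since the measure of an arbitrary element of the Boolean subalgebra generated by $\pM_1\cup\pM_2$ is, by inclusion--exclusion, a $\Z$-linear combination of such numbers, the identity on $\pM_1\cup\pM_2$ extends to a measure-preserving Boolean isomorphism between the dense Boolean subalgebras generated by $\pM_1\cup\pM_2$ in $\pN$ and in $\pM_1\wedge\pM_2$, and then, by completeness of measure algebras, to an isometric isomorphism $\Phi\colon\pN\to\pM_1\wedge\pM_2$. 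Finally I would check that $\Phi$ is an $\pL_\infty'$-isomorphism over the two embeddings: on the dense subalgebra every $\gamma\in F_\infty$ acts by $\gamma(a_1\wedge a_2)=\gamma a_1\wedge\gamma a_2$ in both structures, since $\pM_1$ and $\pM_2$ are $F_\infty$-invariant, so $\Phi$ commutes with the $F_\infty$-action by continuity, and $\Phi$ fixes each constant $S_\gamma$ because $S_\gamma\in\pZ$. Thus $\pM_1\wedge\pM_2\simeq\pN$.

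The main obstacle, and the one place where hyperfiniteness of $\theta$ intervenes, is the identification $\langle\langle\pM_i\rangle\rangle=\pM_i$ of the first paragraph: it passes through quantifier elimination for $\fA_\theta'$ (Theorem \ref{quantifier elimination}) via the computation of definable closures in Theorem \ref{definable closure}. Granted that, what remains is the elementary measure-theoretic matching of the two couplings together with the routine continuity argument for $F_\infty$-equivariance, which is essentially Ben Yaacov's.
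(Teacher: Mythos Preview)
The paper does not give its own proof of this proposition: it is simply cited from \cite[Lemma 2.7]{SchrodingerCat} and used as a black box. Your write-up is a correct and careful adaptation of Ben~Yaacov's argument to the present setting, and you have correctly isolated the two places where something beyond the pure measure-algebra case is needed: the identification $\langle\langle\pM_i\rangle\rangle=\pM_i$ and $\langle\langle\pZ\rangle\rangle=\pZ$ via Theorem~\ref{definable closure} (which is where hyperfiniteness enters, through Theorem~\ref{quantifier elimination}), and the $F_\infty$-equivariance of the isomorphism, which you obtain by checking it on the dense Boolean subalgebra generated by $\pM_1\cup\pM_2$ and passing to the closure.

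One small remark: when you say the isomorphism $\pM_1\wedge\pM_2\simeq\pN$ is ``compatible with the two embeddings'', you are reading the statement in the only sensible way (an isomorphism over $\pM_1\cup\pM_2$), but the proposition as stated in the paper does not make this explicit; it is worth saying so. Also, your observation that $\cP_{\pM_2}$ computed in $\pU$ agrees with the one computed in $\pM_1\wedge\pM_2$ (since conditional expectation onto a subalgebra only depends on the subalgebra and the element, not on the ambient algebra) is exactly the point that makes the $(\Leftarrow)$ direction go through, and is good to have spelled out.
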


\begin{theorem}
If $\theta$ is a hyperfinite IRS, the relation of independence $\ind$ defined above is a stable independence relation when restricted to triples of small subsets, relatively to the theory $\fA_\theta$. Consequently, the theory $\fA_\theta$ is stable and the relation $\ind$ agrees with non-dividing on triples of small subsets.
\end{theorem}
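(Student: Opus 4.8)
The plan is to verify the seven axioms of a stable independence relation for $\ind$ (Definition \ref{stability}) in a $\kappa$-universal domain $\pU \models \fA_\theta'$, leaning on the characterization of $\mathrm{dcl}^\pU$ from Theorem \ref{definable closure} and on the two imported propositions. Since adding the constants $S_\gamma$ does not change the models as $\pL_\infty$-structures (they are interpreted as supports, which are already $\emptyset$-definable), a universal domain for $\fA_\theta'$ is the same object as one for $\fA_\theta$, and it suffices to work in $\fA_\theta'$ where we have quantifier elimination (Theorem \ref{quantifier elimination}). First I would record that $\left<\left<A\right>\right> = \left< F_\infty A \cup \pI_\pU\right>$, so all the $\pazocal{P}_{\left<\left<\cdot\right>\right>}$ appearing below are conditional expectations onto closed subalgebras containing the fixed subalgebra $\pI_\pU$.

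The easy axioms are invariance (the whole definition is phrased via $\mathrm{dcl}$ and $\pazocal{P}$, which are automorphism-equivariant), symmetry (the defining equation $\pazocal{P}_{\left<\left<C\right>\right>}(a)\pazocal{P}_{\left<\left<C\right>\right>}(b) = \pazocal{P}_{\left<\left<C\right>\right>}(a\wedge b)$ is symmetric in $a,b$), and finite character (both $\mathrm{dcl}$ and the conditions are of finite/local nature; more precisely $A \ind_C B$ depends only on the subalgebra generated by finitely many elements of $A$ at a time, because membership in $\left<\left<A\right>\right>$ of a given $a$ already uses only finitely many elements of $A$). For transitivity and local character I would switch to the equivalent ``conditional expectation'' form of independence given by Proposition \ref{independence lemma}: $A \ind_C B \iff \forall a \in \left<\left<A\right>\right>,\ \pazocal{P}_{\left<\left<BC\right>\right>}(a) = \pazocal{P}_{\left<\left<C\right>\right>}(a)$. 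From this, transitivity is the tower property of iterated conditional expectations (onto the chain $\left<\left<C\right>\right> \subseteq \left<\left<BC\right>\right> \subseteq \left<\left<BCD\right>\right>$), and local character follows because for a finite tuple $\bar u$ the function $\pazocal{P}_{\left<\left<B\right>\right>}(\1_{u_i})$ is an $\rL^2$-limit of functions measurable with respect to $\left< F_\infty B_0 \cup \pI_\pU\right>$ for countable $B_0 \subseteq B$, and $|T| = \aleph_0$ here, so a countable $B_0$ suffices.

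For existence and stationarity I would translate back into the measure-algebra language of joinings. Given $A, C$, let $\pM_1 = \left<\left<AC\right>\right>$ and $\pM_2 = \left<\left<BC\right>\right>$ with common substructure $\pZ = \left<\left<C\right>\right>$; Theorem \ref{relative independent joining of stone spaces} produces the relative independent joining $\pN \models \fA_\theta'$ with commuting embeddings, and Proposition \ref{ben yaacov} says precisely that a copy of $\pM_1$ sitting over $\pZ$ with $\pM_1 \wedge \pM_2 \simeq \pN$ is independent from $\pM_2$ over $\pZ$. Embedding $\pN$ back into $\pU$ (using $\kappa$-universality) gives an $A'$ with $\mathrm{tp}(A'/C) = \mathrm{tp}(A/C)$ — here I use that $\fA_\theta'$, having quantifier elimination, determines the type of $A'C$ over $\pZ$ from the isomorphism type of the generated substructure, which is fixed by construction — and $A' \ind_C B$; that is existence. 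Stationarity is the uniqueness half: if $\mathrm{tp}(A/\pM) = \mathrm{tp}(B/\pM)$ (so $\left<\left<A\pM\right>\right> \simeq \left<\left<B\pM\right>\right>$ over $\pM$) and both are independent from $C$ over $\pM$, then by Proposition \ref{ben yaacov} both $\left<\left<A\pM C\right>\right>$ and $\left<\left<B\pM C\right>\right>$ are isomorphic, over $\pM C$, to the relative independent joining of $\left<\left<A\pM\right>\right>$ (resp.\ its isomorph $\left<\left<B\pM\right>\right>$) with $\left<\left<\pM C\right>\right>$ over $\left<\left<\pM\right>\right>$, hence isomorphic to each other over $\pM C$; by quantifier elimination this gives $\mathrm{tp}(A/\pM C) = \mathrm{tp}(B/\pM C)$. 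The main obstacle I anticipate is the bookkeeping around \emph{existence} and \emph{stationarity}: one must check carefully that the substructure generated inside $\pU$ after re-embedding $\pN$ really is the relative independent joining (not merely a quotient of it), i.e.\ that the embeddings $\pM_i \hookrightarrow \pN$ have jointly dense image, and that $\fA_\theta'$ genuinely pins down the type from this substructure — this is where Theorem \ref{quantifier elimination} and the explicit $\mathrm{dcl}$ computation of Theorem \ref{definable closure} do the real work. Once the seven axioms are verified, the cited characterization of stability (the proposition of Ben Yaacov–Usvyatsov quoted just above Definition \ref{stability}) immediately yields that $\fA_\theta$ is stable and that $\ind$ coincides with non-dividing on small subsets.
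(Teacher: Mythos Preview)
Your overall strategy matches the paper's: verify the seven axioms using Proposition \ref{independence lemma} for transitivity and local character, the relative independent joining (Theorem \ref{relative independent joining of stone spaces} and Proposition \ref{ben yaacov}) for existence, and quantifier elimination for stationarity. The treatments of invariance, symmetry, finite character, transitivity, and local character are essentially identical to the paper's.

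There is one genuine gap, in \emph{existence}. You set $\pM_1 = \left<\left<AC\right>\right>$ and $\pM_2 = \left<\left<BC\right>\right>$ and invoke Theorem \ref{relative independent joining of stone spaces}. But that theorem is stated for $\pM_1,\pM_2 \models \fA_\theta'$, and a definable closure $\left<\left<AC\right>\right> = \left<F_\infty(AC)\cup\pI_\pU\right>$ need not be atomless, hence need not be a model. More importantly, even if you push the construction through, you must know that the resulting $\pN$ is a model of $\fA_\theta$ in order to embed it back into $\pU$ by saturation; atomlessness of $\pN$ is not guaranteed if neither factor is atomless. The paper fixes this by first applying L\"owenheim--Skolem to get small \emph{elementary} substructures $\pA,\pB \preceq \pU$ containing $\left<\left<AC\right>\right>$ and $\left<\left<BC\right>\right>$, and then forming the joining of $\pA$ and $\pB$ over $\pC = \left<\left<C\right>\right>$. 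This is an easy patch, but it is needed.

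For \emph{stationarity} you take a different route from the paper. You argue via Proposition \ref{ben yaacov} that both $\left<\left<A\pM C\right>\right>$ and $\left<\left<B\pM C\right>\right>$ are isomorphic over $\left<\left<\pM C\right>\right>$ to the (same) relative independent joining, and then conclude by quantifier elimination. This is correct and conceptually clean, but requires checking that the isomorphism carries $A$ to $B$ (which follows from functoriality of the joining in the first factor, using the isomorphism $\left<\left<A\pM\right>\right>\simeq\left<\left<B\pM\right>\right>$ over $\pM$). The paper instead argues directly with conditional probabilities: from $A \ind_\pM C$ and Proposition \ref{independence lemma} one gets $\cP_{\left<\left<\pM C\right>\right>}(a)=\cP_\pM(a)$ for $a\in\left<\left<A\right>\right>$, and likewise for $B$; since $\mathrm{tp}(A/\pM)=\mathrm{tp}(B/\pM)$ these projections agree, so by the description of $\AMA$-types in Proposition \ref{types in AMA} the $\pL$-types over $\left<\left<\pM C\right>\right>$ agree, and then quantifier elimination (applied to the enlarged tuples $F_\infty A\cup\pI_\pU$, $F_\infty B\cup\pI_\pU$) finishes. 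Your approach buys a more structural picture; the paper's avoids re-invoking Proposition \ref{ben yaacov} and the joining machinery.
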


\begin{proof}
\begin{enumerate}
    \item \textit{Invariance under automorphisms of $\pU$:} If $\rho$ is an automorphism of $\pU$, by  uniqueness of the orthogonal projection, we know that $\cP_{\left< \left< \rho(C) \right> \right>} = \rho \circ \cP_{\left< \left< C \right> \right>} \circ \rho^{-1}$ and therefore
$$\cP_{\left< \left< C \right> \right>}(a) \cP_{\left< \left< C \right> \right>}(b) = \cP_{\left< \left< C \right> \right>}(a \wedge b) \Leftrightarrow \cP_{\left< \left< \rho(C) \right> \right>}(\rho a) \cP_{\left< \left< \rho(C) \right> \right>}(\rho b) = \cP_{\left< \left< \rho(C) \right> \right>}(\rho (a \wedge b)).$$
    
    \item \textit{Symmetry:} The definition is symmetric.
    
    \item \textit{Transitivity:} Let $A,B,C,D$ be small.
    First if $A \underset{C}{\ind} B$ and $A \underset{BC}{\ind} D$ then by Proposition \ref{independence lemma}, for $a \in \left< \left< A \right> \right>$, we have $\cP_{\left< \left< BCD \right> \right>}(a) = \cP_{\left< \left< BC \right> \right>}(a) = \cP_{\left< \left< C \right> \right>}(a)$ so $A \underset{C}{\ind} BD$.
    
    Conversely, suppose that $A \underset{C}{\ind} BD$. Then $\cP_{\left< \left< BCD \right> \right>}(a) = \cP_{\left< \left< C \right> \right>}(a)$, but that implies that $\cP_{\left< \left< C \right> \right>}(a)$ is a $\left< \left< C \right> \right>$-measurable function such that for all $\left< \left< BCD \right> \right>$-measurable function $f$ we have $\int \cP_{\left< \left< C \right> \right>}(a) f = \int \1_a f$. We conclude that $\cP_{\left< \left< BCD \right> \right>}(a) = \cP_{\left< \left< BC \right> \right>}(a) = \cP_{\left< \left< C \right> \right>}(a)$, and therefore that $A \underset{C}{\ind} C$ and $A \underset{BC}{\ind} D$.
    
    \item \textit{Finite character:} It follows from the definition and the continuity of $\cP$.
    
    \item \textit{Existence:} Let $A, B, C$ be small subsets of $\pU$. By L\"owenheim-Skolem theorem, let $\pA$ and $\pB$ be small structures such that $\left< \left< AC \right> \right> \subseteq \pA \preceq \pU$ and $\left< \left< BC \right> \right> \subseteq \pB \preceq \pU$, and let $\pC = \left< \left< C \right> \right>$. Then $\pA$ and $\pB$ are both elementary substructures of $\pU$ containing $\pI_\pU$. It follows that $\pA$ and $\pB \models \fA_\theta'$ when the constants $S_\gamma$ are interpreted by $\mathrm{supp}\ \gamma^\pU$ in either of these models, and $\pC$ is an $\pL_\infty'$-common substructure of $\pA$ and $\pB$, so using Theorem \ref{relative independent joining of stone spaces}, we see that the relative independent joining $\pD$ of $\pA$ and $\pB$ over $\pC$ is a small model of $\fA_\theta$.
    
    By saturation and homogeneity of $\pU$, we can embed $\pD$ in $\pU$ while sending $\pB$ back to $\pB$. Taking the image of $\pA$ by this embedding gives us a new copy $\pA'$ of $\pA$ and a new copy $A'$ of $A$. Finally, $\pA' \wedge \pB \simeq \pD$ so by Proposition \ref{ben yaacov} we get that $\pA' \underset{\pC}{\ind} \pB$, which in turn implies that $A' \underset{C}{\ind} B$.
    
    \item \textit{Local character:} Let $\bar{u} = (u_1,\dots,u_n) \subseteq \pU$ be finite. Consider the conditional probabilities $\cP_{\left< \left< B \right> \right>}(u_i)$. These are $\left< \left< B \right> \right>$-measurable functions with real values and so there is a countably generated $\sigma$-subalgebra of $\left< \left< B \right> \right>$, say $\left< \left< B_0 \right> \right>$ where $B_0 \subseteq B$ is countable, for which they are all measurable.
    But then we have $\cP_{\left< \left< B \right> \right>}(u_i) = \cP_{\left< \left< B_0 \right> \right>}(u_i)$, so by Proposition \ref{independence lemma} $\bar{u} \underset{B_0}{\ind} B$.
      
    \item \textit{Stationarity of types:} We denote by $\mathrm{tp}_\pL(\bar{x}/Y)$ the type of a tuple $\bar{x}$ over a set of parameters $Y$ in the language $\pL$. In other words, this is the type of $\bar{x}$ over $Y$ in the underlying atomless measure algebra of $\pU$.
    
    Let $A,B,C \subseteq \pU$ be small and $\pM \preceq \pU$ be small. Suppose that $\mathrm{tp}(A/\pM) = \mathrm{tp}(B/\pM)$, $A \underset{\pM}{\ind} C$ and $B \underset{\pM}{\ind} C$.
    
     We begin by proving that $\mathrm{tp}_\pL(A/ \left< \left< \pM \cup C \right> \right>) = \mathrm{tp}_\pL(B/ \left< \left< \pM \cup C \right> \right>)$. Indeed, for $a \in  \left< A \right>$ and $b \in \left< B \right>$, we have $\cP_{\left< \left< \pM \cup C \right> \right>}(a) = \cP_\pM(a)$ and $\cP_{\left< \left< \pM \cup C \right> \right>}(b) = \cP_\pM(b)$, but by Proposition \ref{types in AMA} types in $\AMA$ can be fully described with conditional probabilities and we have $\mathrm{tp}_\pL(A/\pM) = \mathrm{tp}_\pL(B/\pM)$ so we get $\mathrm{tp}_\pL(A/ \left< \left< \pM \cup C \right> \right>) = \mathrm{tp}_\pL(B/ \left< \left< \pM \cup C \right> \right>)$.  
    
    Now Theorem \ref{quantifier elimination} implies that $\mathrm{tp}(A/\pM \cup C)$ (resp. $\mathrm{tp}(B/\pM \cup C)$) is determined by the $\pL$-type $\mathrm{tp}_\pL \left( \left< F_\infty A \cup \pI_\pU \right> / \left< \left< \pM \cup C \right> \right> \right)$ (resp. $\mathrm{tp}_\pL \left( \left< F_\infty B \cup \pI_\pU \right> / \left< \left< \pM \cup C \right> \right> \right)$).
    
    Thus, let $A' = F_\infty A \cup \pI_\pU$ and $B' = F_\infty B \cup \pI_\pU$.
    
    It is clear that $\mathrm{tp}(A'/\pM) = \mathrm{tp}(B'/\pM)$, $A' \underset{\pM}{\ind} C$ and $B' \underset{\pM}{\ind} C$ and we can apply what we proved just above to conclude that $\mathrm{tp}_\pL \left( A' / \left< \left< \pM \cup C \right> \right> \right) = \mathrm{tp}_\pL \left( B' / \left< \left< \pM \cup C \right> \right> \right)$, that is
$$\mathrm{tp}_\pL \left( \left< F_\infty A \cup \pI_\pU \right> / \left< \left< \pM \cup C \right> \right> \right) = \mathrm{tp}_\pL \left( \left< F_\infty B \cup \pI_\pU \right> / \left< \left< \pM \cup C \right> \right> \right),$$
    
hence the conclusion.
\end{enumerate}
\end{proof}


\bibliography{Pmp_hyperfinite_actions}

\begin{thebibliography}{BYBHU08}

\bibitem[AGV14]{abertKestenTheoremInvariant2014}
Mikl\'os Ab\'ert, Yair Glasner, and B\'alint Vir\'ag.
\newblock Kesten's theorem for {{Invariant Random Subgroups}}.
\newblock {\em Duke Mathematical Journal}, 163(3):465--488, February 2014.

\bibitem[BY06]{SchrodingerCat}
Ita\"i Ben~Yaacov.
\newblock {{Schr\"odinger's cat}}.
\newblock {\em Israel Journal of Mathematics}, 153:157--191, 2006.

\bibitem[BYBHU08]{yaacovModelTheoryMetric2008}
Ita\"i Ben~Yaacov, Alexander Berenstein, C.~Ward Henson, and Alexander
  Usvyatsov.
\newblock Model theory for metric structures.
\newblock In Zoe Chatzidakis, Dugald Macpherson, Anand Pillay, and Alex Wilkie,
  editors, {\em Model {{Theory}} with {{Applications}} to {{Algebra}} and
  {{Analysis}}}, pages 315--427. {Cambridge University Press}, Cambridge, 2008.

\bibitem[Dye59]{Dye1959}
H.~A. Dye.
\newblock On groups of measure preserving transformation. {I}.
\newblock {\em Amer. J. Math.}, 81:119--159, 1959.

\bibitem[Ele12]{elekFiniteGraphsAmenability2012}
Gabor Elek.
\newblock Finite graphs and amenability.
\newblock {\em Journal of Functional Analysis}, 263(9):2593--2614, November
  2012.

\bibitem[Fre02]{fremlinMeasureTheoryVol2002}
David~H. Fremlin.
\newblock {\em Measure Theory. {{Vol}}. 3: {{Measure}} Algebras}.
\newblock Number D. H. Fremlin ; Vol. 3 in Measure theory. {Fremlin},
  Colchester, 1. print edition, 2002.
\newblock OCLC: 248402938.

\bibitem[Fre13]{fremlinMeasureTheoryVol2013}
David~H. Fremlin.
\newblock {\em Measure Theory. {{Vol}}. 4 {{Pt}}. 1: {{Topological}} Measure
  Spaces}.
\newblock {Fremlin}, Colchester, 2. ed edition, 2013.
\newblock OCLC: 935267275.

\bibitem[FRW11]{FRW}
Matthew Foreman, Daniel~J. Rudolph, and Benjamin Weiss.
\newblock The conjugacy problem in ergodic theory.
\newblock {\em Ann. of Math. (2)}, 173(3):1529--1586, 2011.

\bibitem[FW04]{TurbulenceConjugacy}
Matthew Foreman and Benjamin Weiss.
\newblock An anti-classification theorem for ergodic measure preserving
  transformations.
\newblock {\em J. Eur. Math. Soc. (JEMS)}, 6(3):277--292, 2004.

\bibitem[Gla03]{ErgodicTheoryViaJoinings}
Eli Glasner.
\newblock {\em {{Ergodic Theory Via Joinings}}}, volume 101 of {\em
  Mathematical Surveys and Monographs}.
\newblock American Mathematical Society, Providence, RI, 2003.

\bibitem[Kal02]{FOMP}
Olav Kallenberg.
\newblock {\em Foundations of modern probability}.
\newblock Probability and its Applications (New York). Springer-Verlag, New
  York, second edition, 2002.

\bibitem[Kec10]{kechrisGlobalAspectsErgodic2010}
Alexander Kechris.
\newblock {\em Global {{Aspects}} of {{Ergodic Group Actions}}}, volume 160 of
  {\em Mathematical Surveys and Monographs}.
\newblock {American Mathematical Society}, Providence, Rhode Island, January
  2010.

\bibitem[KM04]{kechrisIIAmenabilityHyperfiniteness2004}
Alexander~S. Kechris and Benjamin~D. Miller.
\newblock {{II}}. {{Amenability}} and {{Hyperfiniteness}}.
\newblock In {\em Topics in {{Orbit Equivalence}}}, volume 1852, pages 7--53.
  {Springer Berlin Heidelberg}, Berlin, Heidelberg, 2004.

\bibitem[Orn70]{EntropyBernoulliShifts}
Donald Ornstein.
\newblock Two {B}ernoulli shifts with infinite entropy are isomorphic.
\newblock {\em Advances in Math.}, 5:339--348 (1970), 1970.

\bibitem[OW80]{OrnsteinWeiss}
Donald~S. Ornstein and Benjamin Weiss.
\newblock Ergodic theory of amenable group actions, 1: The rohlin lemma.
\newblock {\em Bull. Amer. Math. Soc}, pages 161 -- 164, 1980.

\bibitem[{Tuc}15]{tucker-drobWeakEquivalenceNonclassifiability2015}
Robin~D. {Tucker-Drob}.
\newblock Weak equivalence and non-classifiability of measure preserving
  actions.
\newblock {\em Ergodic Theory and Dynamical Systems}, 35(01):293--336, February
  2015.

\end{thebibliography}

\end{document}